\newcounter{mycount}
\theoremstyle{plain}
\newtheorem{theorem}[mycount]{Theorem}
\newtheorem{corollary}[mycount]{Corollary}
\newtheorem{lemma}[mycount]{Lemma}
\newtheorem{proposition}[mycount]{Proposition}
\newtheorem{conjecture}[mycount]{Conjecture}
\theoremstyle{definition}
\newtheorem{definition}{Definition}
\theoremstyle{example}
\newtheorem{example}{Example}
\theoremstyle{openproblem}
\newtheorem{openproblem}{Open Problem}
\theoremstyle{remark}
\newtheorem{remark}{Remark}
\numberwithin{equation}{section}
\numberwithin{figure}{section}
\def\des{\mathsf{des}}
\def\iasc{\mathsf{iasc}}
\newcommand{\asc}{\mathsf{asc}}
\def\max{\mathsf{max}}
\def\zero{\mathsf{zero}}
\def\rep{\mathsf{rep}}
\def\rmin{\mathsf{rmin}}
\def\lmax{\mathsf{lmax}}
\def\lmin{\mathsf{lmin}}
\def\rmax{\mathsf{rmax}}
\def\sebr{\mathsf{sebr}}
\def\Prm{\mathsf{Prm}}
\def\Rmin{\mathsf{Rmin}}
\def\ealm{\mathsf{ealm}}
\def\rpos{\mathsf{rpos}}
\def\A{\mathcal{A}}
\def\T{\CMcal{T}}
\def\M{\CMcal{M}}
\def\R{\CMcal{R}}
\def\CS{ \CMcal{S}}
\def\B{ \mathcal{B}}
\def\C{ \CMcal{C}}
\def\D{ \CMcal{D}}
\def\G{ \CMcal{G}}
\def\Gf{ \mathfrak{G}}
\def\boxit#1{\leavevmode\hbox{\vrule\vtop{\vbox{\kern.33333pt\hrule\kern1pt\hbox{\kern1pt\vbox{#1}\kern1pt}}\kern1pt\hrule}\vrule}}
\newcommand{\SEPATTERN}{
	\draw[step=1, xshift=14pt, yshift=14pt, \cfill, line cap=round] (0,0) grid (3,3);
	\draw[step=1, xshift=14pt, yshift=14pt, thick] (0,1) -- (3,1);
	\draw[step=1, xshift=14pt, yshift=14pt, thick] (1,0) -- (1,3);
	\foreach \x/\y in {1/2,2/3,3/1} \node[disc, fill=black] at (\x,\y) {};
}
\newcommand{\sepattern}{\!\raisebox{-0.3em}{
		\begin{tikzpicture}[line width=0.7pt, scale=0.15]
		\tikzstyle{disc} = [circle,thin,draw=black, minimum size=1.7pt, inner sep=0pt ]
		\SEPATTERN
		\end{tikzpicture}}
}
\newcommand{\cfill}{black!40}
\begin{document}

\title[A bi-symmetric septuple equidistribution on ascent sequences]{Proof of a bi-symmetric septuple equidistribution on ascent sequences}

\author{Emma Yu Jin and Michael J. Schlosser}
\thanks{Both authors were partially supported by the Austrian Research Fund FWF,
project P 32035.}
\address{Fakult\"{a}t f\"{u}r Mathematik, Universit\"{a}t Wien, Vienna, Austria}
\email{yu.jin@univie.ac.at}
\email{michael.schlosser@univie.ac.at}

\maketitle

\date{\today}

\begin{abstract}
It is well known since the seminal work by Bousquet-M\'elou, Claesson, Dukes and Kitaev (2010) that certain refinements of the ascent sequences with respect to several natural statistics are in bijection with corresponding refinements of $({\bf2+2})$-free posets and permutations that avoid a bivincular pattern. Different multiply-refined enumerations of ascent sequences and other bijectively equivalent structures have subsequently been extensively studied by various authors. 

In this paper, our main contributions are
\begin{itemize}
\item a bijective proof of a bi-symmetric septuple equidistribution of statistics on ascent
sequences, involving the number of ascents ($\asc$), the number of repeated entries
($\rep$), the number of zeros ($\zero$), the number of maximal entries ($\max$), the
number of right-to-left minima ($\rmin$) and two auxiliary statistics;
\item a new transformation formula for non-terminating basic hypergeometric $_4\phi_3$
series expanded as an analytic function in base $q$ around $q=1$, which is utilized to
prove two (bi)-symmetric quadruple equidistributions on ascent sequences.
\end{itemize}
A by-product of our findings includes the affirmation of a conjecture about the bi-symmetric equidistribution between the quadruples of Euler--Stirling statistics $(\asc,\rep,\zero,\max)$ and $(\rep,\asc,\max,\zero)$ on ascent sequences, that was motivated by a double Eulerian equidistribution due to Foata (1977) and recently proposed by Fu, Lin, Yan, Zhou and the first author (2018).
\end{abstract}

\section{Introduction and main results}
In the seminal paper \cite{bcdk} by Bousquet-M\'elou, Claesson, Dukes and Kitaev, ascent sequences were introduced, as they are in bijection with several different combinatorial structures such as $({\bf2+2})$-free posets, certain bivincular pattern-avoiding permutations, Stoimenow's involution and regular linearized chord diagrams \cite{sto,zag}. Several natural statistics on posets, permutations and sequences are also kept track of by a sequence of bijections established by these authors. Since then, various joint distributions of classical statistics on ascent sequences and many other bijectively equivalent structures including Fishburn matrices \cite{fi1,fi2} and $({\bf 2-1})$-avoiding inversion sequences have been intensively explored \cite{cl,dkrs,dp,je,je2,kr2,kr,lev}.

Recently, Fu, Lin, Yan, Zhou and the first author \cite{fjlyz} discovered a new decomposition of ascent sequences which contributes to a systematic study of Eulerian and Stirling statistics on ascent sequences, certain pattern-avoiding permutations and $({\bf 2-1})$-avoiding inversion sequences. In particular, their work led them to conjecture
the bi-symmetry of a quadruple
Euler--Stirling statistics on ascent sequences (see Conjecture \ref{conj1ref}) that is motivated by a double Eulerian equidistribution due to Foata \cite{fo}.
However, it appears that the use of the new decomposition from \cite{fjlyz} is not sufficient
to prove the bi-symmetry conjecture.

In the present paper, we affirm this conjecture in two different ways: one by developing a second new decomposition of ascent sequences; and the other one by identifying
the generating function of the quadruple statistics as a basic hypergeometric series
to which a new transformation formula (that is derived in this paper) is applied.
Let us start with some necessary definitions and then state the consequences of our results.

An {\em inversion sequence} $(s_1,s_2,\ldots,s_n)$ is a sequence of non-negative integers such that for all $i$, $0\le s_i<i$. We denote by $\CMcal{I}_n$ the set of inversion sequences of length $n$, which is in one-to-one correspondence with the set $\mathfrak{S}_n$ of permutations of $[n]:=\{1,2,\ldots,n\}$ via the well known Lehmer code $\sigma$ (see for instance \cite{fo,leh}). That is, for $\pi=\pi_1\pi_2\cdots \pi_n\in \mathfrak{S}_n$, the map $\sigma:\mathfrak{S}_n\rightarrow \CMcal{I}_n$ is defined as
\begin{align*}
\sigma(\pi)=(s_1,s_2,\ldots,s_n), \quad\mbox{ where } s_i:=|\{j:j<i \,\mbox{ and }\, \pi_j>\pi_i\}|.
\end{align*}
Some restrictions set up on permutations and inversion sequences could produce new sets of equal cardinality, but not necessarily through the Lehmer code. 
As a case in point, two kinds, respectively, of restricted inversion sequences and of restricted permutations, known as ascent sequences and $(\sepattern\,)$-avoiding permutations (defined as below), are equinumerous.
\begin{definition}[Ascent sequence]
	For any sequence $s\in\CMcal{I}_n$, let
	\begin{align}\label{E:asc}
	\asc(s)&:=|\{i\in[n-1]: s_i<s_{i+1}\}|
	\end{align}
	be the number of {\bf asc}ents of $s$.
	An inversion sequence $s\in \CMcal{I}_n$ is an {\em ascent sequence}
	if for all $2\leq i\leq n$, the $s_i$ satisfy
	\begin{align*}
	s_i\leq \asc(s_1,s_2,\ldots, s_{i-1})+1.
	\end{align*} 
\end{definition}
\begin{definition}[$(\sepattern\,)$-avoiding permutation]
We say that a permutation $\pi\in\mathfrak{S}_n$ {\em avoids the pattern \mbox{\sepattern}} if there is no subsequence $\pi_i\pi_{i+1}\pi_j$  of $\pi$ satisfying both $\pi_i-1=\pi_j $ and $\pi_i<\pi_{i+1}$. Otherwise we say $\pi$ contains the pattern \mbox{\sepattern}. Sometimes the pattern \mbox{\sepattern} is written as $2|3\bar{1}$.
\end{definition}
The $(\sepattern\,)$-avoiding permutations (more generally, permutations that avoid a specific bivincular pattern) were introduced and studied by Bousquet-M\'elou, Claesson, Dukes and Kitaev \cite{bcdk} as both of them are surprisingly in bijection with other classical combinatorial structures such as $({\bf2+2})$-free posets \cite{fi1,fi2} and regular linearized chord diagrams \cite{sto,zag}.

Let $\A_n$ and $\mathfrak{S}_n(\sepattern)$ be the sets respectively of ascent sequences and $(\sepattern\,)$-avoiding permutations of length $n$. Bousquet-M\'elou, Claesson, Dukes and Kitaev \cite{bcdk} proved that
\begin{equation}\label{gffb}
\left|\A_n\right|=\left|\mathfrak{S}_n(\sepattern)\right|=[t^n]\sum_{k=1}^{\infty}\prod_{i=1}^k(1-(1-t)^i),
\end{equation}
and thus, as a consequence of a result by Zagier~\cite{zag}
(who discovered that the series on the right-hand side of \eqref{gffb} is the
generating functions of the Fishburn numbers), $\left|\A_n\right|$
is equal to the $n$-th Fishburn number (see A022493 of the OEIS~\cite{oeis}).
Their explicit values are given as
\begin{equation*}
  \left(\left|\A_n\right|\right)_{n\ge 1}=
  (1, 2, 5, 15, 53, 217, 1014, 5335, 31240, 201608, \ldots),
\end{equation*}
for which no closed form is known.
The study of Fishburn numbers and their generalizations has remarkably led to many interesting results, including congruences \cite{as,gar}, asymptotic formulas \cite{BLR:14,hj,zag}, intriguing connections to transformations of hypergeometric series \cite{aj}, modular forms \cite{BLR:14,zag} and a variety of bijections \cite{cl,dkrs,dp,je,je2,kr2,kr,lev}.
In particular, various members of the Fishburn family (which are defined to be
classes of combinatorial objects enumerated by the Fishburn numbers) can be
viewed as supersets of corresponding members of the Catalan family 
(i.e., classes of combinatorial objects enumerated by the Catalan numbers,
such as non-crossing matchings, $231$-avoiding permutations and Dyck paths). 



This paper is devoted to new bijective and basic hypergeometric aspects of Fishburn structures, for which we review some classical statistics on ascent sequences and $(\sepattern\,)$-avoiding permutations.
For any sequence $s\in\CMcal{I}_n$,  $\asc(s)$ is defined in (\ref{E:asc}). Let furthermore
\begin{align*}
\rep(s)&:=n-\vert\{s_1,s_2,\ldots,s_n\}\vert,\\
\zero(s)&:=\vert\{i\in[n]: s_i=0\}\vert,\\
\max(s)&:=\vert\{i\in[n]:s_i=i-1\}\vert,\quad\text{and}\\
\rmin(s)&:=|\{s_i: s_i<s_j\text{ for all $j>i$}\}|,
\end{align*}
be the respective numbers of {\bf rep}eated entries,  {\bf zero}s, {\bf maxi}mal entries (or maximals for short) and {\bf r}ight-to-left {\bf min}ima of $s$.
For instance, when $s=(0,1,2,0,1,3,5)\in \CMcal{I}_7$, then $\asc(s)=5$, $\rep(s)=2$, $\zero(s)=2$, $\max(s)=3$ and $\rmin(s)=4$. For any permutation $\pi\in\mathfrak{S}_n$, let
\begin{align*}
\des(\pi)&:=|\{i\in[n-1]: \pi_i>\pi_{i+1}\}|,\\
\iasc(\pi)&:=\asc(\pi^{-1})=|\{i\in[n-1]:\pi_i+1 \mbox{ appears to the right of }\pi_i\}|,
\end{align*}
be the number of {\bf des}ents and {\bf i}nverse {\bf asc}ents of $\pi$, respectively. Similar to $\rmin$, the statistics $\lmin$, $\lmax$ and $\rmax$ represent the numbers of {\bf l}eft-to-right {\bf min}ima, {\bf l}eft-to-right {\bf max}ima and  {\bf r}ight-to-left {\bf max}ima, respectively.

Previous bijections developed in \cite{bcdk,dp,fjlyz} preserve natural statistics on posets, permutations, sequences and matrices. As examples, we list below five pairs of equidistributed statistics that were established in those papers.
\begin{align*}
(\asc,\zero) \mbox{ on ascent sequences}
&\xleftrightarrow{1-1} (\des,\lmax) \mbox{ on (\sepattern)-avoiding permutations},\\
&\xleftrightarrow{1-1}(\mathsf{mag},\mathsf{min}) \mbox{ on } ({\bf2+2})\mbox{-free posets},\\
&\xleftrightarrow{1-1} (\mathsf{dim}, \mathsf{rowsum}_1) \mbox{ on Fishburn matrices},\\
&\xleftrightarrow{1-1} (\rep,\max) \mbox{ on } ({\bf2-1})\mbox{-avoiding inversion sequences}.
\end{align*}
\begin{remark}
The statistics $\mathsf{mag}$, $\mathsf{min}$ are abbreviations for {\bf mag}nitude and the number of {\bf min}imal elements of a poset; the statistics $\mathsf{dim}$ and  $\mathsf{rowsum}_1$ refer to {\bf dim}ension and the {\bf sum} of entries in the {\bf first row} of a matrix. 
\end{remark}

In a recent paper \cite{fjlyz} by Fu, Lin, Yan, Zhou and the first author, a joint {\em symmetric} distribution of statistics $\asc$ and $\rep$ over ascent sequences was discovered and our motivation came from a symmetric distribution of $(\asc,\rep)$ on inversion sequences
\begin{align}\label{inv:sym}
\sum_{s\in\CMcal{I}_n}
u^{\asc(s)}x^{\rep(s)}
&=\sum_{s\in\CMcal{I}_n}
u^{\rep(s)}x^{\asc(s)},
\end{align}
which is a consequence of a double Eulerian equidistribution due to Foata~\cite{fo}:
\begin{equation}\label{dou:fo}
\sum_{s\in\CMcal{I}_n}
u^{\asc(s)}x^{\rep(s)}=\sum_{\pi\in\mathfrak{S}_n}u^{\des(\pi)}x^{\iasc(\pi)}.
\end{equation}
It turns out that not only (\ref{inv:sym}) and (\ref{dou:fo}) are true if $\CMcal{I}_n$
and $\mathfrak{S}_n$ are replaced by the corresponding subsets $\A_n$ and $\mathfrak{S}_n(\sepattern)$, but an even stronger result on a bi-symmetric equidistribution of Euler--Stirling statistics \footnote{We adopt the classification of statistics from \cite{fjlyz}: any statistic whose distribution over a member of the Fishburn family equals the distribution of $\asc$ (resp.\ $\zero$) on ascent sequences is called {\em an Eulerian} (resp. a Stirling) statistic. So according to Theorem \ref{bij:sym}, $\asc,\rep$ are Eulerian statistics and $\zero,\max,\rmin$ are Stirling statistics.}  over ascent sequences appears to hold,
supported by experimental data.
\begin{conjecture}\label{conj1ref}\cite{fjlyz}
For each $n\geq1$, the following bi-symmetric quadruple equidistribution holds:
	\begin{align*}
	\sum_{s\in\A_n}
	u^{\asc(s)}x^{\rep(s)}z^{\zero(s)}y^{\max(s)}
	&=\sum_{s\in\A_n}
	u^{\rep(s)}x^{\asc(s)}z^{\max(s)}y^{\zero(s)}. 
	\end{align*}
\end{conjecture}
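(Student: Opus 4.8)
The plan is to obtain Conjecture \ref{conj1ref} as a specialization of a stronger, fully symmetric statement, together with one explicit involution on $\A_n$. Concretely, I would look for a septuple of statistics
\[
\bigl(\asc,\rep,\zero,\max,\rmin,\alpha,\beta\bigr)
\]
on ascent sequences -- with $\alpha,\beta$ two auxiliary statistics yet to be pinned down -- whose joint distribution over $\A_n$ is invariant under the involution that \emph{simultaneously} interchanges the Eulerian pair $\asc\leftrightarrow\rep$, the Stirling pair $\zero\leftrightarrow\max$, and (in some fashion) the remaining data $\rmin,\alpha,\beta$. Setting the variables marking $\rmin,\alpha,\beta$ equal to $1$ then collapses this septuple identity to exactly the claimed quadruple equidistribution. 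The reason for inflating the tuple is that, as already noted in \cite{fjlyz}, the decomposition available there is ``not sufficient'' to force both swaps at once; the extra bookkeeping statistics $\alpha,\beta$ are there precisely to make a recursive construction rigid enough that the involution is uniquely determined and can be checked step by step.

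For the construction I would build a new recursive decomposition of ascent sequences adapted to these statistics. The elementary step is to grow $s=(s_1,\dots,s_{n-1})\in\A_{n-1}$ by appending a letter $\ell\in\{0,1,\dots,\asc(s)+1\}$ and to track how the seven statistics react: appending $\ell$ creates a new ascent iff $\ell>s_{n-1}$, increases $\rep$ iff $\ell$ already occurs among $s_1,\dots,s_{n-1}$, increases $\zero$ iff $\ell=0$, and increases $\max$ only in the extreme case $s=(0,1,\dots,n-2)$ with $\ell=n-1$; moreover the values occurring in an ascent sequence always form an interval $\{0,1,\dots,v\}$ with $v\le\asc(s)$, so that $\rep$ is controlled by the ``defect'' $\asc-v=\asc-(n-1)+\rep$. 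Encoding this growth rule gives a recursion for the full seven-variable generating function $A_n$ with one or two catalytic variables (the last letter $s_{n-1}$ and the current top value $v$), and the problem reduces to exhibiting a weight-preserving, swap-realizing involution on the set of such decompositions. Alternatively I would transport everything to the Fishburn-matrix or $({\bf2-1})$-avoiding inversion sequence model, where $\asc,\rep,\zero,\max$ correspond respectively to $(\text{dimension}-1)$, a column statistic, the first-row sum, and a corner entry, and look there for a transpose-type symmetry realizing the double exchange.

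The hard part is the simultaneity. Swapping $\asc\leftrightarrow\rep$ by itself, or redistributing Stirling statistics among themselves, is comparatively manageable; but producing a single map on $\A_n$ that performs both exchanges, provably lands back in $\A_n$, and squares to the identity is exactly the structure the older decomposition misses. I expect that resolving it requires either (i) a genuinely new decomposition whose building blocks are symmetric under the target swap by design -- so that the involution is essentially ``relabel the blocks'' -- or, should a clean bijection prove elusive, (ii) an analytic route: identify $\sum_{n\ge0}A_n\,t^n$ with a non-terminating basic hypergeometric ${}_4\phi_3$ series, read as an analytic function of the base $q$ near $q=1$, and deduce the bi-symmetry from a suitable (likely new) ${}_4\phi_3$ transformation formula; proving that transformation and matching its four parameters to $\asc,\rep,\zero,\max$ would then be the technical core. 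In either case I would first verify the identity for small $n$ by computer, both to calibrate the auxiliary statistics $\alpha,\beta$ and the catalytic variables and to discard false leads before committing to the general argument.
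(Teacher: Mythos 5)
You have correctly anticipated both of the routes the paper actually takes: a bijective one through a septuple equidistribution involving two auxiliary statistics and a new recursive decomposition, and an analytic one through a non-terminating ${}_4\phi_3$ transformation in base $q=1-r$ read around $q=1$. But what you have written is a research plan rather than a proof: the auxiliary statistics $\alpha,\beta$ are never defined, the decomposition is never constructed, the involution is never exhibited, and the transformation formula is neither stated nor proved. Every object that carries the actual mathematical content is left as a placeholder, so there is no argument to check, and the conjecture is not established by what you have written.

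There is also a structural point where your plan diverges from what turns out to work. You ask for a single involution on $\A_n$ that simultaneously performs $\asc\leftrightarrow\rep$ and $\zero\leftrightarrow\max$. The paper does not build such a map directly. Its new bijection $\Phi$ (Theorem \ref{T:5tuple}) \emph{fixes} $\asc$, $\rep$ and $\zero$ and instead swaps $(\max,\ealm)\leftrightarrow(\rmin,\rpos)$, where $\ealm$ is the entry after the last maximal and $\rpos$ is a newly introduced statistic; the double swap of Conjecture \ref{conj1ref} is then obtained only by composing $\Phi$ with the previously known bijection $\Upsilon$ of Theorem \ref{bij:sym}, which sends $(\asc,\rep,\zero,\max)$ to $(\rep,\asc,\rmin,\zero)$. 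In other words, the target made ``rigid enough'' by the auxiliary statistics is not a double-swap involution but a $\max\leftrightarrow\rmin$ exchange keeping the Eulerian pair fixed, realized by playing two parallel decompositions (one governed by $\ealm$, one by the new statistic $\rpos$) against each other; aiming for the double swap in one step is a different and unverified target. Similarly, on the analytic side the crucial content is not merely recognizing a ${}_4\phi_3$ shape but proving the specific transformation of Theorem \ref{thm:4phi3tf} by analytic continuation from the terminating Sears transformation at the points $a=1-(1-r)^{-n}$, and then matching its parameters to the explicit generating function of Theorem \ref{T:gen}. None of that is present in your proposal.
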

\begin{remark}
Conjecture \ref{conj1ref} is equivalent to a bi-symmetric equidistribution between the quadruples $(\des,\iasc,\lmax,\lmin)$ and $(\iasc,\des,\lmin,\lmax)$ on $(\sepattern)$-avoiding permutations, according to Theorem 12 of \cite{fjlyz}.
\end{remark}
Two results in approaching this conjecture were presented in \cite{fjlyz}: one is a generating function formula of ascent sequences with respect to the statistics $\asc,\rep,\zero,\max$ (see Theorem \ref{T:gen}); and the other one is a quadruple equidistribution between $(\asc,\rep,\zero,\max)$ and $(\rep,\asc,\rmin,\zero)$ on ascent sequences (see Theorem \ref{bij:sym}).

 Let $\G(t;x,y,u,z)$ denote the generating function of ascent sequences counted by the length (variable $t$), $\asc$ (variable $u$), $\rep$ (variable $x$), $\max$ (variable $y$) and $\zero$ (variable $z$). That is,
\begin{align}\label{E:g1}
\G(t;x,y,u,z):=\sum_{n=1}^{\infty}t^n\sum_{s\in\A_n}x^{\rep(s)}y^{\max(s)}u^{\asc(s)}z^{\zero(s)}.
\end{align}
\begin{theorem}\label{T:gen}\cite{fjlyz}
	The generating function $\G(t;x,y,u,z)$ of ascent sequences is
	\begin{align}\label{E:genG}
	\G(t;x,y,u,z)=\sum_{m=0}^{\infty}&\frac{zyr x^m(1-yr)(1-r)^m(x+u-xu)}
	{[x(1-u)+u(1-yr)(1-r)^m][x+u(1-x)(1-yr)(1-r)^m]}\notag\\
	&\times \prod_{i=0}^{m-1}\frac{1+(zr-1)(1-yr)(1-r)^i}{x+u(1-x)(1-yr)(1-r)^i},
	\end{align}
	where $r=t\,(x+u-xu)$.
\end{theorem}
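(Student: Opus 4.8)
The plan is to translate the recursive construction of ascent sequences into a functional equation for a generating function carrying one extra catalytic variable, and then to solve that equation. I would first record the structural facts I need. A nonempty ascent sequence $s=s_1\cdots s_n$ is built from $s_1\cdots s_{n-1}$ (say with $\asc(s_1\cdots s_{n-1})=a$) by appending a letter $s_n\in\{0,1,\ldots,a+1\}$; one checks that in fact $s_n\le a$. Appending $s_n$ increases $\asc$ precisely when $s_n>s_{n-1}$, increases $\zero$ precisely when $s_n=0$, increases $\rep$ precisely when $s_n\in\{s_1,\ldots,s_{n-1}\}$, and increases $\max$ precisely when $s_1\cdots s_{n-1}=(0,1,\ldots,n-2)$ and $s_n=n-1$; the last assertion is best seen from the observation that $\max(s)$ equals the length of the longest prefix of $s$ of the form $(0,1,2,\ldots)$.

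Next I would set $h(s):=\asc(s)-s_n\ge 0$ (with $s_n$ the last letter of $s$) and introduce $\Gf(t;x,y,u,z;w):=\sum_s t^{|s|}x^{\rep(s)}y^{\max(s)}u^{\asc(s)}z^{\zero(s)}w^{h(s)}$, so that $\G(t;x,y,u,z)=\Gf(t;x,y,u,z;1)$. The append rule turns into a functional equation for $\Gf$: appending an ascent letter multiplies by $tu$ and sends $w^{h}\mapsto 1+w+\cdots+w^{h}$ (refined by a factor $x$ on those summands for which the appended value is a repeat); appending a non-ascent letter multiplies by $t$ and sends $w^{h}\mapsto w^{h}+w^{h+1}+\cdots+w^{a}$, where the top term $w^{a}$ corresponds to appending a $0$ and so carries an extra $xz$, and — crucially — encodes the current value of $\asc$, so that it re-enters the equation through a substitution in the catalytic variable. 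This is a functional equation of the standard obstinate-kernel shape, with seed the length-one sequence $(0)$, which contributes $tzy$.

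I would then solve the equation by iteration: substituting it into itself collapses the geometric sums in $w$, and each round contributes a factor $1-r$ with $r=t(x+u-xu)$ (the natural per-position weight produced by the sums), the successive kernels telescoping into the finite product $\prod_{i=0}^{m-1}(\cdots)$ of \eqref{E:genG}, with $m$ recording the number of iterations, while resolving the catalytic variable at $w=1$ supplies the rational prefactor. Equivalently, and more economically for a rigorous write-up, once \eqref{E:genG} has been guessed one can simply check that its right-hand side satisfies the functional equation together with the initial condition $[t]\,\Gf=zy$, and appeal to the fact that reading the functional equation off in powers of $t$ determines $\Gf$ uniquely.

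The main obstacle is the statistic $\rep$: a repeated letter need not create a non-ascent and a non-ascent letter need not be a repeat, so "the new letter is a repeat" is not a function of $(\asc,s_{n-1},s_n)$ alone, and $\rep$ is therefore not captured by the catalytic statistic $h$ in any obvious way. This is exactly where the new decomposition of ascent sequences from \cite{fjlyz} is needed: it reorganizes the growth of an ascent sequence so that, in the reorganized construction, producing a fresh value rather than a repeated one becomes a local move relative to the catalytic statistic — at the price of a slightly more elaborate kernel, which is why the numerator of the product in \eqref{E:genG} is $1+(zr-1)(1-yr)(1-r)^i$ rather than a pure polynomial in $r$. Granting that decomposition, setting up and solving the functional equation is routine manipulation of geometric series, and what remains is the algebraic simplification into the displayed closed form.
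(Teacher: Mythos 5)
Your overall strategy (a catalytic variable, a functional equation of obstinate-kernel type, and iteration) is the right one, and you correctly diagnose the central obstruction: with last-letter insertion, whether the appended letter is a repeat is not determined by the catalytic statistic, so $\rep$ cannot be tracked. But the proposal has a genuine gap precisely at the point where you wave at ``the new decomposition from \cite{fjlyz}'' to fix this. That decomposition is not a reorganized version of last-letter growth carrying your statistic $h(s)=\asc(s)-s_n$: it operates at position $\max(s)+1$ (the entry after the last maximal), uses $\ealm(s)=s_{\max(s)+1}$ as the catalytic statistic, and partitions $\A^*$ into subsets $\D_1,\dots,\D_5$ according to how $s_{\max(s)+2}$ compares with $\ealm(s)$; the elementary moves are ``delete $\ealm(s)$'' (which decreases $\rep$ by exactly $1$), ``replace $\ealm(s)$ by $\max(s)$'', and ``increment $\ealm(s)$'', each changing $(\asc,\rep,\zero,\max)$ by a predictable amount. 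This yields a functional equation in $w^{\ealm(s)}$ whose kernel is killed at $w=1+y^{-1}-r$ and whose iteration substitutes $y\mapsto y-yr+1$, producing the factors $(1-yr)(1-r)^i$ in \eqref{E:genG}. None of this is derivable from the setup you wrote down, and your fallback (``guess \eqref{E:genG} and verify it against the functional equation'') is unavailable because you have not produced a functional equation that determines $\G$ -- the one you sketch omits $\rep$ and only loosely handles $\max$ (the maximal-creating steps live on the diagonal family $(0,1,\dots,n-1)$ and need separate bookkeeping).

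Two smaller but real errors: the claim that the appended letter satisfies $s_n\le a$ with $a=\asc(s_1\cdots s_{n-1})$ is false (e.g.\ $s=(0,1)$ has $a=0$ and $s_2=1=a+1$; the correct bound is $s_n\le a+1$, and what is true is $s_n\le\asc(s)$, i.e.\ $h(s)\ge0$ for the \emph{full} sequence); and the paper itself does not prove Theorem \ref{T:gen} but cites it, the closest in-paper analogue being the derivation in Section \ref{S:refgf} of the $\rmin$-refined generating function \eqref{E:g5} from the functional equation \eqref{E:idf2} -- which is exactly the $\ealm$-based kernel computation your sketch would need to reproduce in order to count as a proof.
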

\begin{theorem}\label{bij:sym}\cite{fjlyz}
	There is a bijection $\Upsilon:\A_n\rightarrow\A_n$ which transforms the quadruple
	$$
	(\asc,\rep,\zero,\max)\; \text{ to }\; (\rep,\asc,\rmin,\zero).
	$$
\end{theorem}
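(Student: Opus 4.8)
The plan is to construct $\Upsilon$ recursively, growing an ascent sequence of length $n$ from one of length $n-1$ on each side and tracking how the four statistics evolve. The naive attempt --- delete the last letter of $s\in\A_n$ to obtain $s'\in\A_{n-1}$, and do the ``same'' to the image --- fails at once: the number of sequences in $\A_n$ obtained from a fixed $s'$ by appending a letter is $\asc(s')+2$, while the number obtained from $\Upsilon(s')$ by appending a letter is $\asc(\Upsilon(s'))+2$, which by the intended statistic transformation equals $\rep(s')+2$ and differs from $\asc(s')+2$ in general. So the recursion has to be asymmetric: a plain ``append a letter'' decomposition on the $(\asc,\rep,\zero,\max)$-side has to be matched against a subtler decomposition on the $(\rep,\asc,\rmin,\zero)$-side --- which is precisely the role played by the ``new decomposition of ascent sequences'' of \cite{fjlyz}.

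On the source side I would use the append decomposition and record, for each choice $s_n\in\{0,1,\dots,\asc(s')+1\}$, the increment vector $(\Delta\asc,\Delta\rep,\Delta\zero,\Delta\max)\in\{0,1\}^4$. This rests on two elementary facts I would establish first: (i) $\max(s)$ equals the length of the longest prefix of $s$ of the form $(0,1,2,\dots)$, so appending a letter can raise $\max$ only when $s'$ is itself such a full prefix and the appended letter continues it; and (ii) the unique element of $\A_n$ with $\rep=0$ is $(0,1,\dots,n-1)$, equivalently the unique one with $\max=n$. With these, the increment vector of the append move is a function of the local data $\bigl(s_{n-1},\ \text{whether }s_n\ \text{already occurs in }s',\ \text{whether }s'=(0,1,\dots,n-2)\bigr)$.

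On the image side I would search for an operation turning $t=\Upsilon(s')\in\A_{n-1}$ into an element of $\A_n$ that has exactly $\asc(s')+2=\rep(t)+2$ branches, with branch increments $(\Delta\rep,\Delta\asc,\Delta\rmin,\Delta\zero)$ equal to the source increments under the correspondence $\asc\leftrightarrow\rep$, $\zero\mapsto\rmin$, $\max\mapsto\zero$ that the theorem dictates; the numerical feasibility of this is exactly the known equidistribution of $\asc$ and $\rep$ on $\A_{n-1}$. Since $\rep(t)+2=n+1-|\{t_1,\dots,t_{n-1}\}|$, a plausible shape for the operation is two ``extremal'' moves (append the running maximum; duplicate a designated letter) together with one move for each value missing from $t$ below its maximum; pinning down the exact family of moves and checking the increments is the crux. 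Granting the matching at every node, one sets $\Upsilon(s):=$ the image of the $s_n$-branch at $s'$ under this correspondence applied at $\Upsilon(s')$; bijectivity and the asserted transformation of statistics then follow by induction on $n$, with base case $\A_1=\{(0)\}$ and $\Upsilon((0))=(0)$.

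The hard part will be locating the correct image-side decomposition and verifying its increments. The source increments are governed by purely local data of $s'$, whereas $\rmin$ is a global feature of $\Upsilon(s')$, so the matching of branches --- coherent enough to yield an honest bijection and not merely an equality of branch counts --- will require a careful, probably case-heavy choice of the pairing, with special attention to the degenerate endpoints $s'=(0,1,\dots,n-2)$ and $s'=(0,0,\dots,0)$. As an independent sanity check on the resulting identity (though not on $\Upsilon$ itself), one could compute the closed-form generating function of $\sum_{s\in\A_n}x^{\asc(s)}y^{\zero(s)}u^{\rep(s)}z^{\rmin(s)}$ from its own decomposition and confirm it equals $\G(t;x,y,u,z)$ of Theorem~\ref{T:gen}.
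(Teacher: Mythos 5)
First, a point of orientation: this paper does not prove Theorem~\ref{bij:sym} at all --- the result is imported verbatim from \cite{fjlyz} and used as a black box --- so there is no internal proof to measure your attempt against. Judged on its own terms, your proposal is a strategy rather than a proof, and the gap sits exactly where you locate the crux: the image-side decomposition is never constructed. Your one concrete guess for it (append the running maximum, duplicate a designated letter, plus one move per value missing from $t$ below its largest entry) does not even have the right cardinality: writing $M$ for the largest entry of $t$, the number of values in $\{0,1,\dots,M\}$ missing from $t$ is $M+1-|\{t_1,\dots,t_{n-1}\}|$, whereas you need $n+1-|\{t_1,\dots,t_{n-1}\}|$ branches, and $M$ is not $n-2$ in general. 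Beyond the count, the entire content of the theorem is the coherent branch-by-branch matching of the increment vectors $(\Delta\rep,\Delta\asc,\Delta\rmin,\Delta\zero)$ on the image side against $(\Delta\asc,\Delta\rep,\Delta\zero,\Delta\max)$ on the source side; you assert that such a matching should exist (invoking the symmetry of $(\asc,\rep)$ as ``numerical feasibility'') but never exhibit it, and equality of branch counts at each node does not by itself yield a bijection. Your preliminary facts (i) and (ii) about $\max$ and $\rep$ are correct, and the source-side increment analysis is fine, but the theorem remains unproved by this outline.

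Second, the framework itself is probably not the right one. The construction in \cite{fjlyz} (and the parallel one built in Sections~\ref{S:thm3}--\ref{Sec:7tuple} of this paper for $\Phi$) does not match an append-a-letter decomposition against another $n\to n-1$ decomposition with equal fan-out; instead it partitions $\A^*$ into five cases ($\D_1,\dots,\D_5$, governed by $\max$ and $\ealm$) and, in parallel, five cases governed by $\rmin$ and an auxiliary statistic, with reduction maps that decrease $|s|-\max(s)$ on one side and $|s|-\rmin(s)$ on the other rather than the length --- indeed some of these maps (such as $h_5$ and $f_5$) do not shorten the sequence at all. The global nature of $\rmin$, which you correctly flag as the obstacle, is precisely why a length-based branch-matching recursion of the kind you propose has not been made to work and why the known arguments are organized around these other complexity measures. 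If you want to complete a proof along your lines, you would in effect have to rediscover that five-case decomposition; as written, the proposal identifies the right difficulty but does not overcome it.
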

Conjecture \ref{conj1ref} can be settled, with the the help of Theorems \ref{T:gen} and \ref{bij:sym}, by showing either (I) or (II), described as follows.

\begin{enumerate}[label=(\Roman*)]
	\item $\G(t;x,y,u,z)=\G(t;u,z,x,y)$;
	\item the quadruple $(\asc,\rep,\zero,\max)$ has the same distribution as $(\asc,\rep,\zero,\rmin)$ over ascent sequences. 
\end{enumerate}
In this paper, we settle Conjecture \ref{conj1ref} in both ways, (I) and (II).

Our first main result (Theorem \ref{T:5tuple}) is a bijective proof of
a bi-symmetric septuple equidistribution on ascent sequences,
which significantly generalizes (II) and consequently affirms Conjecture \ref{conj1ref}.
\begin{theorem}\label{T:5tuple}
	There is a bijection $\Phi: \A_n\rightarrow \A_n$ such that for all $s\in \A_n$, 
	\begin{align}\label{E:thm7sta}
	(\asc,\rep,\zero,\max,\ealm,\rmin,\rpos)s=(\asc,\rep,\zero,\rmin,\rpos,\max,\ealm)\Phi(s).
	\end{align}
\end{theorem}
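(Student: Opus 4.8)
The plan is to construct the bijection $\Phi$ recursively on the length $n$, exploiting the canonical way an ascent sequence of length $n$ is built from one of length $n-1$ by appending a last entry. Recall that if $s=(s_1,\dots,s_{n-1})\in\A_{n-1}$ has $a:=\asc(s)$ ascents, then $s'=(s_1,\dots,s_{n-1},k)\in\A_n$ precisely for $k\in\{0,1,\dots,a+1\}$; appending $k$ creates a new ascent iff $k>s_{n-1}$, keeps $\zero$ unless $k=0$, creates a maximal iff $k=n-1$ (which forces $k=a+1$ and $s=(0,1,\dots,n-2)$), and repeats an entry iff $k\in\{s_1,\dots,s_{n-1}\}$. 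First I would reformulate the right-hand statistics $(\zero,\rmin,\rpos)$ (and $\ealm$ on the left) in terms of a \emph{reverse} or \emph{prefix} growth rule, so that the two sides of \eqref{E:thm7sta} are governed by structurally parallel insertion operations; the two auxiliary statistics $\ealm$ and $\rpos$ are almost certainly chosen exactly so that this parallelism is exact. (These definitions must be missing from the excerpt and would be supplied before the theorem in the full paper; I will assume natural candidates: $\ealm$ counts "essentially almost maximal" positions — entries equal to one less than the number of ascents so far — and $\rpos$ records the value/position of the rightmost minimum.)

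Next I would set up the recursion cell-by-cell. Partition $\A_n$ according to the value of the last entry $s_n$ and, dually, according to the quantity on the right-hand side that plays the role dual to "last entry" (this is where $\rpos$ enters: it should encode the data needed to peel off one step from the right-hand structure). The core of the proof is then a \emph{commuting diagram}: show that appending an entry to $s\in\A_{n-1}$ corresponds, under $\Phi_{n-1}$ followed by the appropriate dual-insertion, to the correct element of $\A_n$, and that each of the seven statistics transforms as prescribed. Concretely one checks that the number of children of $s$ in $\A_n$ equals the number of children of $\Phi_{n-1}(s)$ in the dual sense — both equal $\asc(s)+2 = \asc(\Phi_{n-1}(s))+2$ since $\Phi_{n-1}$ preserves $\asc$ — and then fixes a bijection between the two child-sets that tracks $(\rep,\zero,\max,\ealm,\rmin,\rpos)$ correctly. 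Because $\asc$ and $\rep$ sit on the left in the \emph{same} positions on both sides, while the Stirling block $(\zero,\max,\ealm)$ is permuted to $(\zero,\rmin,\rpos)$ — note $\zero$ is fixed — the induction hypothesis already handles $(\asc,\rep,\zero)$, and the real work is the $3$-cycle-like rearrangement on the remaining four Stirling-type statistics $\max,\ealm\leftrightarrow\rmin,\rpos$.

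The main obstacle, I expect, is verifying that a \emph{single} insertion rule simultaneously respects all four of the "moving" statistics $\max$, $\ealm$, $\rmin$, $\rpos$ at every step — in particular the coupling between $\ealm$ (a left-to-right, ascent-counting notion) and $\rpos$/$\rmin$ (right-to-left notions), which pull in opposite directions along the word. A likely resolution is to route the bijection through an intermediate object — for instance the "second new decomposition of ascent sequences" advertised in the introduction — on which all seven statistics become local/monotone, prove the two half-bijections into that model separately, and then compose; alternatively, one carefully chooses which child of $\Phi_{n-1}(s)$ to assign to the child $(s,k)$ of $s$ by an explicit formula for the new last-entry in terms of $k$, $\asc(s)$, and the current value of $\rpos(\Phi_{n-1}(s))$, and grinds through the eight-or-so cases ($k=0$; $k=n-1$; $k$ a repeat; $k$ a new non-maximal ascent; $k$ a non-ascent non-repeat; etc.). After the recursion is established, bi-symmetry of the septuple (hence Conjecture \ref{conj1ref} via specialization $\ealm\mapsto1,\rpos\mapsto1$ and erasing $\rmin$, or rather via the stated route (II)) follows by iterating $\Phi$ or by a short symmetry argument on the statistic-permutation in \eqref{E:thm7sta}, which is an involution on the index set.
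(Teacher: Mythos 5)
Your proposal is a plan rather than a proof, and the plan as stated runs into a concrete obstruction. The central difficulty is exactly the one you flag and then defer: a recursion that builds $\A_n$ from $\A_{n-1}$ by appending a last entry $k$ cannot track $\rmin$ and $\rpos$, because appending $k$ removes from $\Rmin$ every right-to-left minimum of the prefix whose value is at least $k$ --- a global change of unbounded size that is not determined by $\asc(s)$, $\rep(s)$ and the last entry. So although the child sets of $s$ and $\Phi_{n-1}(s)$ both have cardinality $\asc(s)+2$, there is no statistic-preserving matching of them computable from the induction hypothesis alone: one would need the entire profile of right-to-left minima, which the hypothesis does not carry. (Your guessed definitions of the auxiliary statistics are also not the ones used: $\ealm(s)$ is the single entry immediately after the last maximal, and $\rpos(s)$ is the largest index $m$ such that the $m$-th right-to-left minimum occurs at least twice after the $(m-1)$-th; both are given in Sections 2--3.)

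The paper's proof instead runs a double induction on $|s|-\max(s)$ and $|s|-\rmin(s)$, not on $|s|$, and peels off entries at structurally chosen positions rather than at the end: position $\max(s)+1$ on the $\ealm$ side, and the rightmost occurrence of $\Rmin(s)_{\rpos(s)}$ on the $\rpos$ side. It matches two parallel five-part decompositions $\D_1,\dots,\D_5$ and $\T_1,\dots,\T_5$ cell by cell via explicit maps $h_i$ and $f_i$; the hardest cells ($\T_{5,3}$ and $\T_{5,4}$ versus $\D_5$) require the substitution rules $\R_1,\R_2$ and the insertion rules $\R_3,\R_4$ of Section 3, which occupy most of the paper. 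Your suggestion to route the construction through an intermediate decomposition is in the right spirit --- the intermediate structure is precisely the new decomposition of Section 2 --- but the actual content of the theorem lies in constructing and verifying those cell bijections, none of which appears in your proposal. Your closing remarks on deducing Conjecture 1 from the septuple identity (via route (II) together with Theorem 3) are correct.
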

We postpone the definitions of the two auxiliary statistics $\ealm,\rpos$ to Sections \ref{S:thm3} and \ref{S:bij}. 
The main idea to prove Theorem \ref{T:5tuple} relies on two {\em parallel} decompositions of ascent sequences that are in close relation to the two respective
auxiliary statistics $\ealm$ and $\rpos$. The former decomposition was discovered in \cite{fjlyz}. However, using this decomposition alone appears to be not enough to prove Conjecture \ref{conj1ref}, which motivates us to develop the latter new decomposition in this paper, providing a crucial piece of the puzzle solved here.



Our second main result (Theorem \ref{thm:4phi3tf}) is a new transformation formula
of non-terminating basic hypergeometric ${}_4\phi_3$ series, valid as an identity
expanded in base $q=1-r$ around $q=1$, or, equivalently, $r=0$,
with relevant definitions being given in Section \ref{S:hyp}.
\begin{theorem}\label{thm:4phi3tf}
	Let $a,b,c,d,e,r$ be complex variables, $j$ be a non-negative integer.
	Then, assuming that none of the denominator factors in \eqref{tf43}
	have vanishing constant term in $r$, we have the following transformation
	of convergent power series in $a$ and $r$:
	\begin{align}\label{tf43}
	&{}_4\phi_3\!\left[\begin{matrix}(1-r)^j,1-a,b,c\\
	d,e,(1-r)^{j+1}(1-a)bc/de\end{matrix};1-r,1-r\right]\notag\\
	&=\frac{((1-r)/e,(1-r)(1-a)bc/de;1-r)_j}{((1-r)(1-a)/e,(1-r)bc/de;1-r)_j}
	\notag\\
	&\quad\;\times{}_4\phi_3\!\left[\begin{matrix}(1-r)^j,1-a,d/b,d/c\\
	d,de/bc,(1-r)^{j+1}(1-a)/e\end{matrix};1-r,1-r\right].
	\end{align}
\end{theorem}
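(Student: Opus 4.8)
The plan is to recognize \eqref{tf43} as the non-terminating avatar of Sears' classical ${}_4\phi_3$ transformation, read off as an identity of power series in $r=1-q$. First I would record the terminating Sears formula in the form
\[
{}_4\phi_3\!\left[\begin{matrix}q^{-n},A,B,C\\D,E,F\end{matrix};q,q\right]
=\frac{(E/A,F/A;q)_n}{(E,F;q)_n}\,A^n\,
{}_4\phi_3\!\left[\begin{matrix}q^{-n},A,D/B,D/C\\D,Aq^{1-n}/E,Aq^{1-n}/F\end{matrix};q,q\right],
\]
valid for $n\in\mathbb{Z}_{\ge0}$ whenever $DEF=ABCq^{1-n}$. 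Substituting $q=1-r$, $(A,B,C,D,E,F)=(1-a,\,b,\,c,\,d,\,e,\,(1-r)^{j+1}(1-a)bc/de)$ and, crucially, $n=-j$, the balancing condition becomes an identity, and a short computation using $(x;q)_{-j}=1/(xq^{-j};q)_j$ together with the $q$-Pochhammer reflection $(x;q)_j=(-x)^jq^{\binom{j}{2}}(q^{1-j}/x;q)_j$ collapses the prefactor $(E/A,F/A;q)_nA^n/(E,F;q)_n$ to exactly the ratio of $q$-shifted factorials in \eqref{tf43}, while $Aq^{1-n}/E$ and $Aq^{1-n}/F$ turn into $(1-r)^{j+1}(1-a)/e$ and $de/bc$. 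Hence \eqref{tf43} is nothing but Sears' transformation at the non-positive integers $j=0,-1,-2,\dots$; what remains is to promote it to all $j$, in particular to the non-negative integers, where both ${}_4\phi_3$'s are genuinely non-terminating.

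I see two routes for the promotion. The constructive one is to re-execute, inside the ring of power series in $r$ set up in Section \ref{S:hyp}, the kind of argument that proves such transformations: expand the left ${}_4\phi_3$ of \eqref{tf43} as a double series, interchange the two summations, and evaluate the resulting inner series by the $q$-Pfaff--Saalsch\"utz summation. In that ring the non-terminating basic hypergeometric series at argument $1-r$ is a bona fide object (each coefficient of $r$ being a convergent power series in $a$), and the inner sum is a \emph{terminating} ${}_3\phi_2$ irrespective of whether the outer one terminates, so $q$-Pfaff--Saalsch\"utz applies verbatim; the only thing to verify is that the reordering of the double series is legitimate in the $r$-adic (and $a$-adic) sense, after which the right-hand side of \eqref{tf43} drops out. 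The alternative route is analytic continuation in $j$: coefficientwise in $r$ and for $a$ in a small disc, both sides of \eqref{tf43} extend to entire functions of $j$ whose exponential type is governed by $|\log(1-\rho)|$ for a parameter combination $\rho$ that is $O(a)$, hence is $<\pi$ once $a$ is small; a Carlson-type uniqueness theorem, applied on the half-plane $\operatorname{Re}j\le0$ where the vanishing of $(\text{left})-(\text{right})$ at $j=0,-1,-2,\dots$ already lives, then forces equality for all $j$.

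The step I expect to be the real obstacle is the bookkeeping that licenses these non-terminating manipulations. In the first route it is the $r$-adic convergence of the reordered double series: since every summation index already contributes at order $r^0$ (the order-$r^0$ part of the left side being essentially $(1-\rho)^{-j}$), interchanging the summations is not a formal triviality but needs an honest estimate on the valuations of the terms. In the second route it is the uniform exponential-type bound for $j$ ranging over an entire half-plane — not merely along the real axis — for both the ${}_4\phi_3$'s and the prefactor of $q$-shifted factorials; this is exactly where the hypothesis of working with convergent power series in $a$ (equivalently, $a$ small) is used in an essential way. Modulo that analytic bookkeeping, the algebraic core is the classical pairing of Sears' transformation with $q$-Pfaff--Saalsch\"utz; the new content is the observation that this pairing persists into the non-terminating regime when the identity is read as a transformation of power series about $q=1$.
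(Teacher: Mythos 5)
Your argument correctly identifies the terminating Sears transformation \eqref{tfsears} as the engine, and your base case is sound: with $n=-j$ the balancing condition holds and \eqref{tf43} at non-positive integer $j$ is exactly terminating Sears (your prefactor bookkeeping via $(x;q)_{-j}=1/(xq^{-j};q)_j$ does check out). But the promotion from $j\le 0$ to the non-negative integers --- which is the entire content of the theorem, since that is where both ${}_4\phi_3$'s are non-terminating --- is left undone, and both of your proposed routes face real obstacles beyond ``bookkeeping.'' For the $q$-Pfaff--Saalsch\"utz route, the standard double-series derivation, run on a genuinely non-terminating ${}_4\phi_3$, produces the \emph{three-term} non-terminating Sears relation rather than the two-term identity \eqref{tf43}; to see the extra term vanish you would have to exploit that $((1-r)^j;1-r)_k=O(r^k)$ makes the series effectively terminating coefficientwise in $r$, which is precisely the estimate you defer. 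For the Carlson route, the exponential-type bounds on a half-plane in $j$ are asserted, not proved, and the coefficients of $a^ir^l$ are exponential polynomials in $j$ (the prefactor contributes terms like $(1-1/e)^j$), so a naive polynomial identity theorem does not apply either.

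The paper sidesteps all of this by continuing in $a$ rather than in $j$. Keep $j$ a fixed non-negative integer; observe that both sides of \eqref{tf43} are convergent power series in $a$ and $r$, because the expansion of $(1-a;1-r)_m$ involves only monomials $a^ir^l$ with $i+l\ge m$ and every denominator factor has non-vanishing constant term, so each coefficient receives finitely many contributions; then specialize $a=1-(1-r)^{-n}$, which turns the \emph{second} upper parameter $1-a$ into $(1-r)^{-n}$ and terminates both series. At these infinitely many values of $a$ the identity is the $(q,a,b,c,d,e)\mapsto(1-r,(1-r)^j,b,c,d,e)$ case of \eqref{tfsears}, and the one-variable identity theorem in $a$ (the Andrews--Jel\'inek device) concludes. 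So the choice of which upper parameter to degenerate is not cosmetic: terminating via $1-a$ makes the extension step a routine application of analyticity in $a$, whereas terminating via $(1-r)^j$ leaves you with a continuation problem in $j$ that your proposal does not resolve. As written, you have proved \eqref{tf43} only for $j\le 0$, which is not the case claimed.
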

We utilize special cases of Theorem \ref{thm:4phi3tf} to give analytic proofs
of two different quadruple (bi)-symmetric equidistributions of Euler--Stirling statistics
on ascent sequences, collected in Theorem~\ref{TC:int}.
The first application of Theorem \ref{thm:4phi3tf} is a proof of (I)
by making use of the explicit form of the generating function in Theorem~\ref{T:gen},
and thus constitutes a non-combinatorial proof of the bi-symmetric equidistribution
in Conjecture \ref{conj1ref}, while the second application establishes a symmetric
equidistribution by employing a new explicit generating function obtained by
a refined recursive construction of ascent sequences from \cite{fjlyz}.
\begin{theorem}\label{TC:int}
For the generating function defined in (\ref{E:g1}), we have the bi-symmetry
\begin{equation}\label{E:sym1}
\G(t;x,y,u,z)=\G(t;u,z,x,y).
\end{equation}
Furthermore, define 
	\begin{equation}\label{E:g2}
	\Gf(t;x,y,u,v):=\sum_{n=1}^{\infty}t^n\sum_{s\in\A_n}x^{\rep(s)}y^{\max(s)}
	u^{\asc(s)}v^{\rmin(s)},
	\end{equation}
	then we have, with $r=t(x+u-xu)$,
	\begin{align}\label{E:g5}
	\Gf(t;x,y,u,v)=\frac{vyt}{1-vytu}
	+\sum_{m=0}^{\infty}\frac{rv(1-yr)(1-r)^m}
	{(x-xu+u(1-yr)(1-r)^m)(1-tuvy)}&\notag\\
	\times\prod_{i=0}^{m}\frac{x(1-(1-yr)(1-r)^i)(x-xu+u(1-yr)(1-r)^i)}
	{(x-u(x-1)(1-yr)(1-r)^i)(x-xu+u(1-rv)(1-yr)(1-r)^i)}&,
	\end{align}
and the symmetry
	\begin{equation}\label{E:sym2}
	\Gf(t;x,y,u,v)=\Gf(t;x,v,u,y),
	\end{equation}
	In other words, for any ascent sequence $s\in\A_n$, 
	\begin{align*}
	(\asc,\rep,\zero,\max)s&=(\rep,\asc,\max,\zero)\Upsilon^{-1}(\Phi(s)),\\
	(\asc,\rep,\max,\rmin)s&=(\asc,\rep,\rmin,\max)\Phi(s),\\
	(\asc,\rep,\zero,\rmin)s&=(\rep,\asc,\rmin,\zero)\Upsilon(\Phi(s)),
	\end{align*}
	where $\Upsilon$ and $\Phi$ are the bijections respectively in Theorems \ref{bij:sym} and \ref{T:5tuple}. 
\end{theorem}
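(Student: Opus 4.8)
The plan is to establish the two generating-function symmetries \eqref{E:sym1} and \eqref{E:sym2} together with the closed form \eqref{E:g5} by basic hypergeometric manipulations, and then to read off the three displayed equalities as formal consequences of Theorems \ref{bij:sym} and \ref{T:5tuple}.

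For \eqref{E:sym1} I would first substitute $q=1-r$, so that $r=t(x+u-xu)$ is a power series in $t$ with vanishing constant term and everything that follows is a legitimate identity of power series in $r$, equivalently in $t$. Next I would reorganize the right-hand side of \eqref{E:genG}: the product $\prod_{i=0}^{m-1}$ supplies the $q$-shifted factorial $\big((1-zr)(1-yr);q\big)_m$ in the numerator and $\prod_{i=0}^{m-1}\big(x+u(1-x)(1-yr)q^{i}\big)$ in the denominator, and absorbing the bracket factor $x+u(1-x)(1-yr)q^{m}$ turns the latter into $\prod_{i=0}^{m}\big(x+u(1-x)(1-yr)q^{i}\big)$; computing the term ratio of the resulting summand should then exhibit $\G(t;x,y,u,z)$ as a prefactor $P(x,y,u,z)$ times a basic hypergeometric series of the shape $\,{}_4\phi_3\!\left[\begin{matrix}q^{j},1-a,b,c\\d,e,q^{j+1}(1-a)bc/de\end{matrix};q,q\right]$, for an explicit choice of $j$ and of $a,b,c,d,e$ as rational functions of $x,y,u,z$ (possibly after cancellations that collapse it to a lower ${}_{r}\phi_{s}$). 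I expect the involution $(b,c,e)\mapsto(d/b,d/c,de/bc)$ realised by \eqref{tf43} to correspond, under this parametrisation, precisely to the substitution $(x,y,u,z)\mapsto(u,z,x,y)$, so that a single application of Theorem \ref{thm:4phi3tf}, whose prefactor should absorb the difference between $P(x,y,u,z)$ and $P(u,z,x,y)$, yields \eqref{E:sym1}.

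To obtain \eqref{E:g5} I would refine the recursive construction of ascent sequences from \cite{fjlyz} that underlies Theorem \ref{T:gen} so that it also records $\rmin$. When a letter $c\in\{0,1,\dots,\asc(s')+1\}$ is appended to $s'\in\A_{n-1}$, the right-to-left minima of the new word are $c$ together with precisely those right-to-left minima of $s'$ whose value is strictly smaller than $c$, while $\asc$, $\rep$ and the catalytic data evolve exactly as in \cite{fjlyz}; this produces a linear $q$-functional equation for $\Gf$ in one catalytic variable. Solving it by iteration — with the contribution of the initial all-zero block split off, which accounts for the term $\tfrac{vyt}{1-vytu}$ — gives \eqref{E:g5}. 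For \eqref{E:sym2}, that isolated term is already invariant under $y\leftrightarrow v$, and the remaining series, after the substitution $q=1-r$ and the same $q$-Pochhammer bookkeeping, is a prefactor times a ${}_4\phi_3$ of the form in \eqref{tf43}; the specialization of Theorem \ref{thm:4phi3tf} that effects $y\leftrightarrow v$ then gives \eqref{E:sym2}. Finally, Theorem \ref{T:5tuple} gives in particular $(\asc,\rep,\zero,\rmin)\Phi(s)=(\asc,\rep,\zero,\max)s$ and $(\asc,\rep,\rmin,\max)\Phi(s)=(\asc,\rep,\max,\rmin)s$, while Theorem \ref{bij:sym} gives the transfer rules $(\rep,\asc,\rmin,\zero)\Upsilon(w)=(\asc,\rep,\zero,\max)w$ and $(\rep,\asc,\max,\zero)\Upsilon^{-1}(w)=(\asc,\rep,\zero,\rmin)w$; substituting $w=\Phi(s)$ and composing yields the three displayed identities.

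The main obstacle is the recognition, for both \eqref{E:genG} and \eqref{E:g5}, of these fully explicit generating functions as basic hypergeometric series of exactly the form required by Theorem \ref{thm:4phi3tf}: one must find the correct grouping of the many rational factors into $q$-shifted factorials, pin down $a,b,c,d,e$ and $j$, and verify that the residual prefactor is carried by \eqref{tf43} to the one obtained under the target substitution — a single sign or shift mismatch would break the chain. A secondary difficulty is guessing, and then verifying against the recursion, the closed form \eqref{E:g5} of the catalytic functional equation.
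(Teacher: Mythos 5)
Your plan coincides with the paper's proof: \eqref{E:g5} is obtained there by extending the decomposition of \cite{fjlyz} to track $\rmin$, yielding a functional equation in the catalytic variable $w$ that is solved by the kernel method and iteration (the strictly increasing sequences $(0,1,\ldots,n-1)$ --- not an ``all-zero block'' --- account for the isolated term $vyt/(1-vytu)$), and the symmetries \eqref{E:sym1} and \eqref{E:sym2} are proved exactly as you describe, by writing both generating functions in base $q=1-r$ and applying the $j=1$ cases of the two ${}_3\phi_2$ degenerations of Theorem~\ref{thm:4phi3tf} (Corollaries~\ref{cor:3phi2tf} and~\ref{cor2:3phi2tf}), which supply the explicit parameter choices you leave open. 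Your reading of the three combinatorial identities as compositions of the bijections of Theorems~\ref{bij:sym} and~\ref{T:5tuple} likewise matches the paper's.
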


\begin{remark}
	We are not the first ones to study equivalent forms for generating functions of objects of the Fishburn family using tools from basic hypergeometric series. Initiating with work of Zagier~\cite{zag} who established the basic hypergeometric series in \eqref{gffb} as a concrete form of the generating function $\G(t;1,1,1,1)$ for the Fishburn numbers, Andrews and Jel\'{i}nek \cite{aj} subsequently proved three equivalent forms of $\G(t;1,1,1,z)$ by applying the Rogers--Fine identity. However, to the best of our knowledge, no algebraic or analytic arguments to determine equivalent forms of the generating functions $\G(t;x,y,u,z)$
or $\Gf(t;x,y,u,v)$ were known, not even, say, for the special case $\G(t;1,1,u,z)$.
Our analytic proofs of $\G(t;x,y,u,z)=\G(t;u,z,x,y)$ and $\Gf(t;x,y,u,v)=\Gf(t;x,v,u,y)$
strengthen the already known existing ties between (refined) generating functions of objects of the Fishburn family with basic hypergeometric series that are expanded in base $q=1-r$ around $r=0$. At the same time it demonstrates the benefit of having equivalent forms of generating functions, and the power of basic hypergeometric machinery.
\end{remark}
All aforementioned (bi)-symmetric distributions on ascent sequences have counterparts over other members of the Fishburn family.
\begin{corollary}
There are three bijections between $\mathfrak{S}_n(\sepattern)$ and itself such that the following three (bi)-symmetric equidistributions hold, respectively:
		\begin{align*}
	(\des,\iasc,\lmax,\lmin,\rmax)\pi&=(\des,\iasc,\lmax,\rmax,\lmin)(\Psi^{-1}\circ\Phi\circ\Psi)(\pi),\\
	(\des,\iasc,\lmax,\lmin)\pi&=(\iasc,\des,\lmin,\lmax)(\Psi^{-1}\circ\Upsilon^{-1}\circ\Phi\circ\Psi)(\pi),\\
	(\des,\iasc,\lmax,\rmax)\pi&=(\iasc,\des,\lmin,\lmax)(\Psi^{-1}\circ\Upsilon\circ\Phi\circ\Psi)(\pi),
	\end{align*}
	 where $\Upsilon,\Phi$ are the bijections respectively in Theorems \ref{bij:sym} and \ref{T:5tuple}, and $\Psi:\mathfrak{S}_n(\sepattern)\rightarrow \A_n$ is the bijection from Theorem 12 of \cite{fjlyz}.
\end{corollary}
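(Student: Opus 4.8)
The plan is to obtain each of the three maps as the conjugate by $\Psi$ of the appropriate self-bijection of $\A_n$, and to transfer the equidistributions across $\Psi$ via its statistic-preserving properties. Recall that, by Theorem~12 of \cite{fjlyz}, $\Psi:\mathfrak{S}_n(\sepattern)\to\A_n$ satisfies $(\des,\iasc,\lmax,\lmin)\pi=(\asc,\rep,\zero,\max)\Psi(\pi)$ for every $\pi$. The one extra ingredient needed is that $\Psi$ also carries right-to-left maxima of a permutation to right-to-left minima of the corresponding ascent sequence, i.e.\ $\rmax(\pi)=\rmin(\Psi(\pi))$; the plan is to obtain this by inspecting the recursive construction of $\Psi$ in \cite{fjlyz} and checking that the step governing the left-to-right scan interacts with the right-to-left minima of the ascent sequence in exactly the required way. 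Granting this, $\Psi$ sends the quintuple $(\des,\iasc,\lmax,\lmin,\rmax)$ to $(\asc,\rep,\zero,\max,\rmin)$, and a fortiori the quadruples $(\des,\iasc,\lmax,\lmin)$ and $(\des,\iasc,\lmax,\rmax)$ to $(\asc,\rep,\zero,\max)$ and $(\asc,\rep,\zero,\rmin)$, respectively.

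Next, define the three maps to be $\Psi^{-1}\circ\Phi\circ\Psi$, $\Psi^{-1}\circ\Upsilon^{-1}\circ\Phi\circ\Psi$ and $\Psi^{-1}\circ\Upsilon\circ\Phi\circ\Psi$. Since each is a composition $\mathfrak{S}_n(\sepattern)\to\A_n$ (via $\Psi$), then $\A_n\to\A_n$ (one or two applications of $\Phi$ and $\Upsilon^{\pm1}$), then $\A_n\to\mathfrak{S}_n(\sepattern)$ (via $\Psi^{-1}$), each is a bijection of $\mathfrak{S}_n(\sepattern)$ with itself.

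For the first identity, set $s=\Psi(\pi)$. Combining the statistic transport of $\Psi$ with Theorem~\ref{T:5tuple} restricted to the coordinates $\asc,\rep,\zero,\max,\rmin$ gives $(\des,\iasc,\lmax,\lmin,\rmax)\pi=(\asc,\rep,\zero,\max,\rmin)s=(\asc,\rep,\zero,\rmin,\max)\Phi(s)$; reading the $\Psi$-correspondence backwards on $\Phi(s)$ and permuting the coordinates accordingly rewrites the last tuple as $(\des,\iasc,\lmax,\rmax,\lmin)(\Psi^{-1}\circ\Phi\circ\Psi)(\pi)$, which is the claim. The second and third identities are obtained in the same way, with Theorem~\ref{T:5tuple} replaced by the identities $(\asc,\rep,\zero,\max)s=(\rep,\asc,\max,\zero)\Upsilon^{-1}(\Phi(s))$ and $(\asc,\rep,\zero,\rmin)s=(\rep,\asc,\rmin,\zero)\Upsilon(\Phi(s))$ from Theorem~\ref{TC:int}, using that $\Psi$ matches $(\des,\iasc,\lmax,\lmin)$ with $(\asc,\rep,\zero,\max)$ in the second case and $(\des,\iasc,\lmax,\rmax)$ with $(\asc,\rep,\zero,\rmin)$ in the third.

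The only part that is not a formal chase through the already-established correspondences for $\Psi$, $\Phi$ and $\Upsilon$ is the verification that $\Psi$ sends $\rmax$ to $\rmin$, and this is the step I expect to require the most care; should it fail, one would instead precompose $\Phi$ (respectively $\Upsilon^{\pm1}\circ\Phi$) with an auxiliary self-bijection of $\A_n$ correcting the $\rmax$--$\rmin$ discrepancy, but the shape of Theorem~\ref{T:5tuple} strongly suggests the correspondence holds exactly.
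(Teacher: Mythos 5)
Your approach is the intended one: the paper states this corollary without any proof, treating it as a formal transport of Theorem~\ref{T:5tuple} and the last display of Theorem~\ref{TC:int} across $\Psi$, and your chase through the correspondences is exactly that argument. You are also right to single out, as the only non-formal ingredient, the fact that $\Psi$ carries $\rmax$ on permutations to $\rmin$ on ascent sequences: the present paper only ever invokes the quadruple correspondence $(\des,\iasc,\lmax,\lmin)\leftrightarrow(\asc,\rep,\zero,\max)$ (in the remark after Conjecture~\ref{conj1ref}), so the quintuple version does have to be extracted from Theorem~12 of \cite{fjlyz} as you propose. With that granted, your derivations of the first two identities are complete and correct.

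One warning about the third identity. If you carry out your own recipe --- translate $(\des,\iasc,\lmax,\rmax)\pi$ to $(\asc,\rep,\zero,\rmin)s$, apply $(\asc,\rep,\zero,\rmin)s=(\rep,\asc,\rmin,\zero)\Upsilon(\Phi(s))$, and translate back --- the dictionary $\rep\leftrightarrow\iasc$, $\asc\leftrightarrow\des$, $\rmin\leftrightarrow\rmax$, $\zero\leftrightarrow\lmax$ yields $(\iasc,\des,\rmax,\lmax)$ on the right-hand side, not the printed $(\iasc,\des,\lmin,\lmax)$. Obtaining $\lmin$ in the third slot would require $\lmax(\pi)=\lmin(\pi'')$, i.e.\ $\zero(s)=\max(\Upsilon(\Phi(s)))$, whereas what the quoted theorems actually give is $\zero(s)=\zero(\Phi(s))=\rmin(\Upsilon(\Phi(s)))$; the behaviour of $\max$ under $\Upsilon$ is not controlled by Theorem~\ref{bij:sym}. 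So ``obtained in the same way'' is too quick here: your computation proves a correct identity that differs from the printed one in its third coordinate, and you should either flag the printed third coordinate as an apparent typo or supply the additional fact about $\max\circ\Upsilon$ that would be needed to match the statement literally.
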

Let us recall the definition of Fishburn matrices and associated three Stirling statistics.

Any cell $(i,j)$ of a matrix $M$ is called a {\em weakly north-east cell}\/ if $M_{i,j}\ne 0$ and $M_{s,t}=0$ for all $s\le i$ and $t\ge j$. A matrix is a {\em Fishburn} matrix if all of its entries are non-negative integers such that neither row nor column contains only zero entries. Let $\CMcal{F}_n$ be the set of Fishburn matrices whose sum of entries equals $n$, then for any $M\in\CMcal{F}_n$, let 
\begin{align*}
\mathsf{rowsum}_1(M)&:=\mbox{ the sum of entries in the first row of } M,\\
\mathsf{ne}(M)&:=\mbox{ the number of weakly north-east cells of } M, \\
\mathsf{tr}(M)&:=\mbox{ the number of non-zero cells on the main diagonal of } M.
\end{align*}
\begin{corollary}\label{C:fish}
	There is a bijection between $\CMcal{F}_n$ and itself such that the following symmetric distribution holds:
	\begin{align*}
	(\mathsf{rowsum}_1,\mathsf{ne},\mathsf{tr})M=(\mathsf{rowsum}_1,\mathsf{tr},\mathsf{ne})(\phi\circ \Phi\circ\phi^{-1})M,
	\end{align*}	
	 where $\Phi$ is the bijection in Theorem \ref{T:5tuple} and $\phi:\A_n\rightarrow \CMcal{F}_n$ is the bijection from Theorem 3.6 of \cite{cyz}.
\end{corollary}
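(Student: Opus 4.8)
The final statement to prove is Corollary \ref{C:fish}, which transports the septuple-equidistribution bijection $\Phi$ of Theorem \ref{T:5tuple} through the bijection $\phi:\A_n\to\CMcal{F}_n$ of \cite{cyz} to obtain a symmetric distribution of $(\mathsf{rowsum}_1,\mathsf{ne},\mathsf{tr})$ on Fishburn matrices.

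\textbf{Proof proposal.} The plan is to show that the composite map $\phi\circ\Phi\circ\phi^{-1}:\CMcal{F}_n\to\CMcal{F}_n$ does the job, and for this it suffices to match the three Fishburn-matrix statistics $\mathsf{rowsum}_1,\mathsf{ne},\mathsf{tr}$ with three of the ascent-sequence statistics appearing in \eqref{E:thm7sta} under $\phi^{-1}$. Concretely, I would first recall from Theorem 3.6 of \cite{cyz} (and the accompanying statistic correspondences established there) that for $s\in\A_n$ the bijection $\phi$ satisfies
\begin{align*}
\mathsf{rowsum}_1(\phi(s))&=\zero(s),\\
\mathsf{ne}(\phi(s))&=\max(s),\\
\mathsf{tr}(\phi(s))&=\rmin(s),
\end{align*}
or whichever precise triple of $(\zero,\max,\rmin)$-type identities is proved in \cite{cyz}; the key point is that $\mathsf{rowsum}_1$ corresponds to a statistic fixed by $\Phi$ (namely $\zero$), while $\mathsf{ne}$ and $\mathsf{tr}$ correspond to the pair $(\max,\rmin)$ that $\Phi$ interchanges. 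Granting this, the computation is immediate: for any $M\in\CMcal{F}_n$, write $s=\phi^{-1}(M)\in\A_n$, so that $(\mathsf{rowsum}_1,\mathsf{ne},\mathsf{tr})M=(\zero,\max,\rmin)s$. Applying Theorem \ref{T:5tuple}, $\Phi(s)$ has $(\asc,\rep,\zero,\max,\ealm,\rmin,\rpos)\Phi(s)$ equal to $(\asc,\rep,\zero,\rmin,\rpos,\max,\ealm)s$; in particular $\zero(\Phi(s))=\zero(s)$, $\max(\Phi(s))=\rmin(s)$, and $\rmin(\Phi(s))=\max(s)$. Pushing forward through $\phi$ again, $(\mathsf{rowsum}_1,\mathsf{ne},\mathsf{tr})(\phi(\Phi(s)))=(\zero,\max,\rmin)\Phi(s)=(\zero(s),\rmin(s),\max(s))=(\mathsf{rowsum}_1,\mathsf{tr},\mathsf{ne})M$, which is exactly the claimed identity with $\phi\circ\Phi\circ\phi^{-1}$ as the bijection. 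That this composite is a bijection $\CMcal{F}_n\to\CMcal{F}_n$ is clear since it is a composition of bijections.

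\textbf{Main obstacle.} The only genuine content beyond bookkeeping is verifying that $\phi$ really does translate $(\zero,\max,\rmin)$ on ascent sequences into $(\mathsf{rowsum}_1,\mathsf{ne},\mathsf{tr})$ on Fishburn matrices. The first of these ($\zero\leftrightarrow\mathsf{rowsum}_1$) is already recorded in the list of known equidistributions in the introduction, so it is safe to cite. For the other two I would quote the relevant statistic-preservation statements from \cite{cyz}, where the bijection $\phi$ is constructed precisely so as to track such Stirling statistics; if \cite{cyz} phrases the correspondence in terms of a slightly different but equivalent triple, a one-line remark reconciling the conventions would be needed. No new combinatorial argument on the matrices themselves is required — the symmetry is inherited wholesale from the $\max\leftrightarrow\rmin$ swap built into $\Phi$.
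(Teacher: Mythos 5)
Your proposal is correct and is exactly the argument the paper intends: the corollary is stated without proof as an immediate consequence of Theorem \ref{T:5tuple} combined with the statistic correspondences of the bijection $\phi$ from Theorem 3.6 of \cite{cyz}, which is precisely the transport-of-structure computation you carry out. Your observation that the conclusion is insensitive to which of $\max,\rmin$ corresponds to $\mathsf{ne}$ and which to $\mathsf{tr}$ (since $\Phi$ swaps them while fixing $\zero\leftrightarrow\mathsf{rowsum}_1$) is a sensible safeguard and does not change the argument.
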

\begin{remark}	
The three Stirling statistics $\mathsf{rowsum}_1,\mathsf{ne}, \mathsf{tr}$ are pairwise symmetric on $\CMcal{F}_n$. The fact that the pair $(\mathsf{ne}, \mathsf{tr})$ is symmetric on $\CMcal{F}_n$ is a direct consequence of Corollary \ref{C:fish} and it is known from  \cite{cyz,je2} that the other two pairs $(\mathsf{rowsum}_1,\mathsf{ne})$ and $(\mathsf{rowsum}_1,\mathsf{tr})$ are also symmetric.
\end{remark}

The paper is organized as follows. In Section~\ref{S:thm3} and \ref{S:bij}, we present a new decomposition of ascent sequences and a sequence of relevant bijections. In Section~\ref{Sec:7tuple} we prove Theorem~\ref{T:5tuple}. A refined generating function of ascent sequences is derived in Section~\ref{S:refgf} and its amenability to transformations of basic hypergeometric series is demonstrated in Section~\ref{S:hyp}.
We end the paper in Section \ref{S:fre} with some final remarks;
in particular we pose an open problem there and state
a conjecture on a symmetric quintuple equidistribution
over inversion sequences.



\section{A new decomposition of ascent sequences}\label{S:thm3}
The purpose of this section is to develop a new decomposition of ascent sequences. This decomposition is centered around a new auxiliary statistic $\rpos$, which is defined as below.

Let $\Rmin$ be the corresponding set-valued statistic of $\rmin$, that is, for any $s\in\CMcal{I}_n$,
\begin{align*}
\mathsf{Rmin}(s)=\{s_i:s_i<s_j \mbox{ for all } j>i\}.
\end{align*}
For the sake of convenience, we index all right-to-left minima from left to right starting from 0 (rather than from 1). That is, all right-to-left minima of $s$ are indexed by $0,1,\ldots,\rmin(s)-1$ from left to right.

\begin{definition}[statistics $\rpos$]\label{D:rpos}
	For any ascent sequence $s$ with $\rmin(s)\ne |s|$, define $\rpos(s)=m$ if $m$ is the maximal index such that the $m$-th right-to-left minimum appears at least twice after the $(m-1)$-th right-to-left minimum. 
	If no such $m$ exists or $\rmin(s)=|s|$, set $\rpos(s)=0$.
	
	For example, $\rpos({\bf 0},{\bf 0},1,2,3,4)=0$ and $\rpos(0,0,1,2,0,1,2,1,{\bf 3},{\bf 3},4)=2$.
\end{definition} 


Let $\A^*$ denote the set of all ascent sequences except $s=(0,1,2,\ldots,|s|-1)$, we will divide it into a couple of disjoint subsets. Some statistics are needed in order to describe this new decomposition.

\begin{definition}[statistics $\sebr$]\label{D:sebr}
	Let $\Rmin(s)_j$ be the $j$-th smallest element of $\Rmin(s)$ where $0\le j< \rmin(s)$. Given any ascent sequence $s\in\A^*$, define $\sebr(s)$ to be the {\bf s}mallest {\bf e}ntry {\bf b}etween the two {\bf r}ightmost entries $\Rmin(s)_{\rpos(s)}$, and assume $\sebr(s)=0$ if the two rightmost entries $\Rmin(s)_{\rpos(s)}$ are next to each other.
	
	For example, let $s=(0,0,1,2,0,1,2,1,{\bf 3},4,5,{\bf 3},4)$, then $\rpos(s)=2$, $\Rmin(s)_{2}=3$ and $\sebr(s)=4$ where the two rightmost entries $3$ are in bold.
\end{definition} 
We set
\begin{align*}
\T_{1}&:=\{s\in\A^*: |s|=\rmin(s)+1\},\\
\T_{2}&:=\{s\in\A^*-\T_1: \sebr(s)=0\},
\end{align*}
so that the complement $\T_1\dot\cup\T_2$ in $\A^*$ contains all ascent sequences $s\in \A^*$ with $\sebr(s)\ne 0$. We next divide the remaining set
$\A^*-\T_1\dot\cup\T_2$ into the following two disjoint subsets $\A^1$ and $\A^2$ by comparing $\sebr(s)$ and $\Rmin(s)_{\rpos(s)+1}$. For the case $\rpos(s)=\rmin(s)-1$, we always assume that $\sebr(s)<\Rmin(s)_{\rpos(s)+1}$. 
\begin{align*}
\A^1&:=\{s\in\A^*:  \sebr(s)\ge \Rmin(s)_{\rpos(s)+1}\},\\
\A^2&:=\{s\in\A^*: 0\ne \sebr(s)<\Rmin(s)_{\rpos(s)+1}\}.
\end{align*}
Now we refine the sets $\A^1$ and $\A^2$ through the concept of {\em maximal ascents}:

\begin{definition}($\CMcal{M}$asc)\label{D:masc}
	An entry $s_i$ of an ascent sequence $s$ is an {\bf $\CMcal{M}$}aximal {\bf asc}ent ({\em  $\CMcal{M}$asc}) if $$s_i=\asc(s_1,s_2,\ldots,s_{i-1})+1.$$ In particular, the last entry $s_{|s|}$ of $s$ is an $\M asc$ if $s_{|s|-1}<s_{|s|}=\asc(s)$. 
\end{definition}
\begin{remark}
	By Definition \ref{D:masc}, all maximal entries are $\M asc$'s. For instance, given $s=({\bf 0},{\bf 1},{\bf 2},0,{\bf 3},2)$, the entries in bold are all $\CMcal{M}$asc's of $s$.
\end{remark}
Let $\Prm(s)_j$ be the $j$-th smallest element of $\Prm(s)$, where 
\begin{align*}
\Prm(s):=\{i: s_i \mbox{ is a right-to-left minimum of } s\}
\end{align*}
is the set of {\bf p}ositions of {\bf r}ight-to-left {\bf m}inima of $s$,  we then further partition $\A^1$ and $\A^2$ into the following disjoint subsets (see  Figure \ref{F:b1}):
\begin{align*}
\T_{3}&:=\{s\in\A^1: \sebr(s)=\Rmin(s)_{\rpos(s)+1}, \Prm(s)_{\rpos(s)+1}=\Prm(s)_{\rpos(s)}+1, \\
&\quad\quad\qquad\qquad\qquad \mbox{ and no $\CMcal{M}$asc appears after the position } \Prm(s)_{\rpos(s)}\}, \\
\T_{4}&:=\{s\in\A^2: 0\ne \sebr(s)<\Rmin(s)_{\rpos(s)+1} \mbox{ and } s_{|s|} \mbox{ is not an }\M asc \},\\
\T_{5,1}&:=\{s\in\A^1: \sebr(s)>\Rmin(s)_{\rpos(s)+1} \mbox{ and } \Prm(s)_{\rpos(s)+1}=\Prm(s)_{\rpos(s)}+1\},\\
\T_{5,2}&:=\{s\in\A^1: \sebr(s)\ge \Rmin(s)_{\rpos(s)+1} \mbox{ and } \Prm(s)_{\rpos(s)+1}\ne \Prm(s)_{\rpos(s)}+1\},\\
\T_{5,3}&:=\{s\in\A^2: 0\ne \sebr(s)<\Rmin(s)_{\rpos(s)+1}\}-\T_4,\\
\T_{5,4}&:=\{s\in\A^1: \sebr(s)=\Rmin(s)_{\rpos(s)+1}, \Prm(s)_{\rpos(s)+1}=\Prm(s)_{\rpos(s)}+1\}-\T_3.
\end{align*}
Since $\T_3\dot\cup \T_{5,1}\dot\cup \T_{5,2}\dot\cup \T_{5,4}=\A^1$, $\T_4\dot\cup \T_{5,3}=\A^2$ and $\T_1\dot \cup\T_2=\A^*-(\A^1\dot\cup \A^2)$, it is clear that $\A^*$ is the disjoint union of subsets $\T_i, \T_{5,i}$ for $1\le i\le 4$.
\begin{figure}[ht]
	\centering
	\includegraphics[scale=0.7]{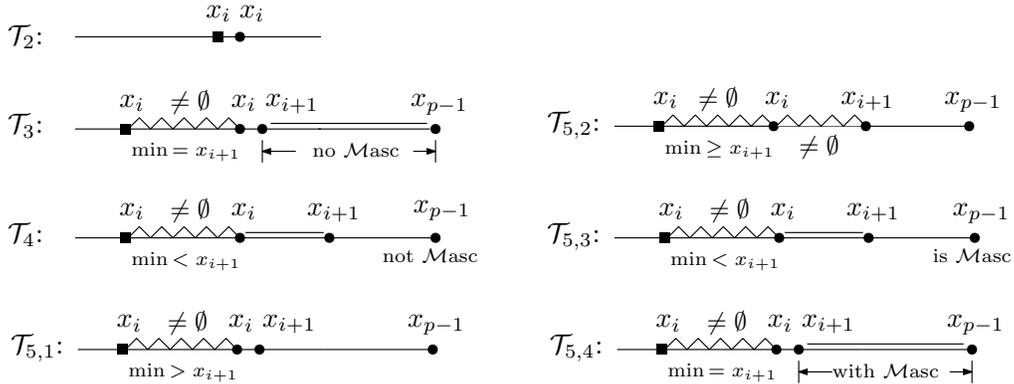}
	\caption{A partition of the set of ascent sequences $s\in\A^*$ with $\rpos(s)=i$ and $\rmin(s)=p$, where $x_i=\Rmin(s)_{i}$ denotes the $i$-th right-to-left minimum of $s$; black dots and squares represent the rightmost and the second rightmost entry respectively. \label{F:b1}}
\end{figure}
\section{A sequence of bijections on ascent sequences}\label{S:bij}
In this section, we present a sequence of bijections that map each $\T_i,\T_{5,i}$ where $i\ne 1$ to a subset of ascent sequences $s$ either with smaller $|s|-\rmin(s)$ or with smaller $\rmin(s)-\rpos(s)$. Then, by combining the parallel bijections from \cite{fjlyz}, we are able to recursively construct the desired bijection $\Phi$ for Theorem \ref{T:5tuple} in Section \ref{Sec:7tuple}.

Let us recall the statistic $\ealm$ introduced in \cite{fjlyz}.
\begin{definition}[statistic $\ealm$]\label{D:ealm}
	
	Let $s$ be an ascent sequence with $\mathsf{max}(s)\ne |s|$, then $\ealm(s)=s_{\max(s)+1}$, i.e., the entry right after the last maximal.	If $s=(0,1,\ldots,|s|-1)$ is an ascent sequence with $\max(s)=|s|$, then we set $\ealm(s)=0$.
	
	For example, $\ealm(0,1,{\bf 0},1,3,0,2)=0$.
\end{definition} 

Throughout the paper, define $\chi(a)=1$ if the statement $a$ is true; and $\chi(a)=0$ otherwise.

\begin{lemma}\label{L:rmincase2}
	There is a bijection
	$$f_2:\T_2\cap \A_n\rightarrow \{(i,s):s\in\A^*\cap \A_{n-1}, \rpos(s)\le i< \rmin(s)\}$$ 
	that sends $s$ to a pair $f_2(s)=(\rpos(s),s^*)$ satisfying
	\begin{equation*}
	(\asc,\max,\ealm, \rmin)s=(\asc,\max,\ealm,\rmin)s^*,
	\end{equation*}
	\begin{equation*}
	\zero(s)=\zero(s^*)+\chi(\rpos(s)=0),\quad\;
	\text{and}\quad\;
	\rep(s)=\rep(s^*)+1.
	\end{equation*}
\end{lemma}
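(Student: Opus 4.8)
The plan is to construct $f_2$ explicitly by locating the two rightmost copies of the right-to-left minimum $x_i := \Rmin(s)_{\rpos(s)}$ (which sit next to each other, since $\sebr(s)=0$ for $s\in\T_2$) and deleting the rightmost of the two. Concretely, write $i=\rpos(s)$, let $p_1<p_2$ be the two positions where $x_i$ occurs among the rightmost entries, and — because $s\in\T_2$ means these occurrences are adjacent — we have $p_2=p_1+1$. Define $s^*$ to be the word obtained from $s$ by deleting the entry in position $p_2$. First I would check that $s^*$ is a genuine ascent sequence of length $n-1$: the deleted entry $s_{p_2}=x_i$ equals $s_{p_1}$, so it is a repeated entry and in particular not an $\M$asc; deleting a non-$\M$asc entry does not change $\asc$ of any prefix to its right nor create a violation of the ascent-sequence condition. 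This also immediately gives $\asc(s)=\asc(s^*)$ and $\rep(s)=\rep(s^*)+1$ (we removed one entry, and it was a repeat of something still present).

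Next I would track the remaining statistics. For $\zero$: the deleted entry is $0$ exactly when $x_i=0$, i.e.\ when $\rpos(s)=0$ (the $0$-th right-to-left minimum is always $0$ for an ascent sequence whose first entry is $0$), which yields $\zero(s)=\zero(s^*)+\chi(\rpos(s)=0)$. For $\max$ and $\ealm$: since $x_i=s_{p_1}=s_{p_2}$ is a right-to-left minimum appearing at least twice, it is never a maximal entry, and every maximal entry of $s$ lies strictly to the left of $p_1$ (maximal entries are $\M$asc's, hence cannot be weakly dominated by a later entry equal to a right-to-left minimum); deleting position $p_2$, which is to the right of all maximals, therefore preserves the set of maximal positions and the entry immediately following the last maximal, so $\max(s)=\max(s^*)$ and $\ealm(s)=\ealm(s^*)$. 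For $\rmin$: deleting the rightmost of two equal adjacent right-to-left minima leaves the set $\Rmin$ unchanged, since $x_i$ still occurs (at $p_1$) and every other right-to-left minimum keeps its defining property; hence $\rmin(s)=\rmin(s^*)$. It remains to see $s^*\in\A^*$, i.e.\ $s^*\ne(0,1,\dots,n-2)$: this holds because $s^*$ still contains the repeated value $x_i$.

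The inverse map takes a pair $(i,s^*)$ with $s^*\in\A^*\cap\A_{n-1}$ and $\rpos(s^*)\le i<\rmin(s^*)$, locates the unique position of the right-to-left minimum $\Rmin(s^*)_i$ that is rightmost among right-to-left-minima entries equal to it — call it $q$ — and inserts a new copy of that value immediately after position $q$. I would verify that the result $s$ lies in $\T_2\cap\A_n$ (the inserted entry is a repeat, hence a legal ascent-sequence entry; $\sebr(s)=0$ because the two rightmost copies of $\Rmin(s^*)_i$ are now adjacent; and $\rpos(s)=i$ because $i$ becomes the maximal index of a right-to-left minimum occurring at least twice after its predecessor) and that this is a two-sided inverse of $f_2$.

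The main obstacle I expect is bookkeeping around the boundary/degenerate cases: ensuring that the constraint $\rpos(s)\le i<\rmin(s)$ on the target is exactly right (in particular handling $i=\rpos(s)$ versus larger $i$, and the case where $\Rmin(s^*)_i$ might coincide with $\Rmin(s^*)_{i\pm1}$ as a value), and checking that $\rpos$ behaves as claimed after insertion when $i=\rmin(s^*)-1$. The statistic-preservation claims are then routine given the structural lemma that all maximal entries — indeed all $\M$asc's after the relevant position — lie to the left of the rightmost pair of equal right-to-left minima.
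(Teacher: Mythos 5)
Your proposal is correct and takes essentially the same route as the paper's proof: delete the rightmost of the two adjacent copies of $\Rmin(s)_{\rpos(s)}$, record the index $\rpos(s)$, and check that the listed statistics behave as claimed (the paper leaves these verifications as ``easily seen,'' while you supply them together with the inverse map). The only imprecise spots---your reasons for $s^*\ne(0,1,\dots,n-2)$ and for all maximal entries lying strictly left of the deleted pair---are both most cleanly justified by invoking $s\notin\T_1$, i.e.\ $\rmin(s)\le|s|-2$, but this is a matter of polish rather than a gap.
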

\begin{proof}
	For any ascent sequence $s \in\T_2$ with $\rpos(s)=i<\rmin(s)$, the two rightmost $\Rmin(s)_{i}$ are next to each other; see $\T_2$ from Figure \ref{F:b1}. Removing one of them leads to an ascent sequence $s^*\in \A^*$ with $\rpos(s^*)\le i$. We set $f_2(s)=(\rpos(s),s^*)$ and it is easily seen that $f_2$ is a bijection satisfying
	$|s^*|=|s|-1$,
	$\asc(s^*)=\asc(s)$, $\rep(s^*)=\rep(s)-1$, $\zero(s^*)=\zero(s)-\chi(\rpos(s)=0)$, $\max(s^*)=\max(s)$, $\ealm(s^*)=\ealm(s)$ and $\rmin(s^*)=\rmin(s)$. 
\end{proof}
\begin{example}
	For $s=(0,0,1,2,0,1,2,1,{\bf3},{\bf3},4)$, according to Lemma \ref{L:rmincase2}, $f_2(s)=(2,s^*)$ where $s^*=(0,0,1,2,0,{\bf 1},2,{\bf 1},3,4)$ is an ascent sequence with $\rpos(s^*)=1$.
\end{example}

Let $\CMcal{P}_1$ be the set of ascent sequences $s\in \A^*$ whose last entry is an $\M asc$, that is, $s_{|s|-1}<s_{|s|}=\asc(s)$. Denote by $\CMcal{P}_1^c$ the complement of $\CMcal{P}_1$ in $\A^*$.
\begin{lemma}\label{L:p1}
	There is a bijection $$\phi_1:\A_n \cap \CMcal{P}_1\rightarrow \A_{n-1}\cap \A^*$$ 
	that transforms the septuple 
	$$(\asc,\rep,\zero,\max,\ealm,\rmin,\rpos) \mbox{ to } (\asc+1,\rep,\zero,\max,\ealm,\rmin+1,\rpos).$$
\end{lemma}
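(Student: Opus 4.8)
The plan is to construct $\phi_1$ very explicitly by simply deleting the last entry. Given $s\in\A_n\cap\CMcal{P}_1$, its last entry $s_n$ is an $\M\mathsf{asc}$, i.e. $s_{n-1}<s_n=\asc(s_1,\dots,s_{n-1})+1$. Set $s^*=(s_1,\dots,s_{n-1})$, the word with the last letter removed. First I would check that $s^*$ is indeed an ascent sequence in $\A^*$: the ascent-sequence inequalities $s_i\le\asc(s_1,\dots,s_{i-1})+1$ for $2\le i\le n-1$ are inherited verbatim from $s$, and $s^*\ne(0,1,\dots,n-2)$ because that word has $\asc=n-2$, which would force $s_n=n-1$ and $s_{n-1}=n-2$, contradicting $s_{n-1}<s_n$ unless $n\le 1$; one must also rule out the degenerate small cases, but for $s\in\A^*\cap\CMcal{P}_1$ these do not arise. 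Conversely, given any $s^*\in\A_{n-1}\cap\A^*$, define $s=(s^*_1,\dots,s^*_{n-1},\asc(s^*)+1)$; since $\asc(s^*)+1\le\asc(s^*)+1$ this is a legal extension, it lies in $\CMcal{P}_1$ by construction, and it is not $(0,1,\dots,n-1)$ because $s^*\ne(0,1,\dots,n-2)$. These two maps are mutually inverse, so $\phi_1$ is a bijection.

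Next I would track the seven statistics. Appending an $\M\mathsf{asc}$ that equals $\asc(s^*)+1$ creates exactly one new ascent (since $s_{n-1}<s_n$), so $\asc(s)=\asc(s^*)+1$; equivalently $\asc(s^*)=\asc(s)-1$, matching the ``$\asc+1$'' in the target. The new last entry $s_n=\asc(s^*)+1\ge 1$ is nonzero, is strictly larger than every preceding entry (hence distinct from all of them), so it contributes nothing to $\zero$ or $\rep$: $\zero(s)=\zero(s^*)$ and $\rep(s)=\rep(s^*)$. For $\max$: $s_n=\asc(s^*)+1$, and $n-1=|s^*|$, so $s_n=n-1$ exactly when $\asc(s^*)=n-2$, i.e. $s^*=(0,1,\dots,n-2)$, which is excluded; hence $s_n$ is never a maximal and $\max(s)=\max(s^*)$, and consequently the position of the last maximal is unchanged, giving $\ealm(s)=\ealm(s^*)$ (with the boundary convention for $\max=|s|$ never being triggered on the $s$ side since $\max(s)=\max(s^*)<|s^*|=n-1<n$). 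For $\rmin$: $s_n$ is strictly larger than all earlier entries, so it is itself a right-to-left minimum, and it does not disturb which earlier entries are right-to-left minima (an earlier $s_i$ satisfies $s_i<s_j$ for all $n-1\ge j>i$ iff it does for all $n\ge j>i$, because $s_n$ exceeds everything); thus $\rmin(s)=\rmin(s^*)+1$, matching ``$\rmin+1$''. Finally $\rpos$: the new rightmost right-to-left minimum $s_n$ appears only once, so it cannot be the index witnessing $\rpos$; the witnessing index (the maximal $m$ such that the $m$-th right-to-left minimum repeats after the $(m-1)$-st, or $0$ if none) is computed from the same data within $s^*$, so $\rpos(s)=\rpos(s^*)$. (One should note the indexing subtlety: the right-to-left minima of $s$ are those of $s^*$ with the same left-to-right order plus one new largest one appended at index $\rmin(s^*)$, so all old indices $0,\dots,\rmin(s^*)-1$ are preserved.)

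The only genuinely delicate points are the two I would dwell on: (a) verifying $s^*\in\A^*$ and, in the inverse direction, $s\notin\{(0,1,\dots,|s|-1)\}$ — these use the hypothesis $s_{n-1}<s_n$ in an essential way and need the small-length cases checked; and (b) the bookkeeping that $s_n$ is never a maximal entry, since that is what keeps $\max$, $\ealm$ unchanged rather than shifted. Everything else is a routine consequence of the observation that the appended letter is strictly greater than all of its predecessors. I expect (a) to be the main obstacle, in the sense that it is where a careless argument could go wrong; the statistic computations themselves are mechanical once one records that $s_n>\max_i s_i$.
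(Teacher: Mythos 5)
Your construction is exactly the paper's: $\phi_1$ deletes the last entry, and the paper leaves every verification to the reader, so your detailed bookkeeping follows the same route and the statistic computations are all correct. One justification is wrong, though: to show $s^*\ne(0,1,\dots,n-2)$ you argue this would contradict $s_{n-1}<s_n$, but it would not ($s_{n-1}=n-2<n-1=s_n$ is perfectly consistent with membership in $\CMcal{P}_1$); the correct reason is that $s$ would then be the strictly increasing sequence $(0,1,\dots,n-1)$, which is excluded because $\CMcal{P}_1\subseteq\A^*$ by definition. Separately, the two facts you lean on without proof --- that the appended value $\asc(s^*)+1$ strictly exceeds $s^*_{n-1}$ (needed for the inverse map to land in $\CMcal{P}_1$, i.e.\ for the appended entry to be an $\M asc$ in the sense of Definition~\ref{D:masc}) and in fact strictly exceeds every entry of $s^*$ (needed for your claims about $\rep$, $\rmin$ and $\rpos$) --- both reduce to the easy induction that any ascent sequence $w$ of length $m\ge1$ satisfies $w_m\le\asc(w)$, applied to $s^*$ and to its prefixes; this half-line lemma is worth recording explicitly, since without it ``$s_n$ is strictly larger than all earlier entries'' is only $\le$ on its face.
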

\begin{proof}
	For any ascent sequence $s\in \CMcal{P}_1$, remove the last entry and define the resulting sequence as $\phi_1(s)$. It is easy to examine the corresponding statistics.
\end{proof}

\begin{lemma}\label{L:rmincase5}
	There is a bijection 
	\begin{align*}
	f_3:\T_3 \cap \A_n \rightarrow \{s\in \A_{n} \cap \CMcal{P}_1:\rpos(s)\ne 0\}
	\end{align*}
	that transforms the quintuple 
	\begin{align*}
	(\asc,\rep,\max,\rmin,\rpos) \mbox{ to } (\asc,\rep+1,\max,\rmin-1,\rpos-1),
	\end{align*}
      and satisfies
	\begin{align*}
	\zero(s)&=\zero(f_3(s))+\chi(\rpos(s)=0),\\
	\ealm(s)&=\ealm(f_3(s))-\chi(\Prm(s)_{\rpos(s)}=\max(s)+1).
	\end{align*}
\end{lemma}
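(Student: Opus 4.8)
The plan is to construct $f_3$ explicitly by a local surgery on $s\in\T_3$ near the two rightmost copies of $x:=\Rmin(s)_{\rpos(s)}$. Recall that for $s\in\T_3\subseteq\A^1$ we have $\sebr(s)=\Rmin(s)_{\rpos(s)+1}=:x'$, the positions of the $\rpos(s)$-th and $(\rpos(s)+1)$-th right-to-left minima are consecutive, i.e.\ $\Prm(s)_{\rpos(s)+1}=\Prm(s)_{\rpos(s)}+1$, and no $\M asc$ appears after position $\Prm(s)_{\rpos(s)}$. Write $s=(s_1,\dots,s_{k-1},x,x',w_1,\dots,w_\ell,x)$ where $k=\Prm(s)_{\rpos(s)}$, so that $s_k=x$, $s_{k+1}=x'$ is the next right-to-left minimum, the block $w_1\dots w_\ell$ lies strictly above $x$, and the final entry $s_n=x$ is the second occurrence of $x$. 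The idea is to delete the second occurrence of $x$ at the very end and reinsert a copy of $\asc(s)$ (hence an $\M asc$) as the new last entry, after having suitably relabeled the tail so that the block between the two former copies of $x$ no longer contains $x$ as the smallest value; this both removes one right-to-left minimum (decrementing $\rmin$) and shifts which block is "marked" by $\rpos$ (decrementing $\rpos$), while adding a repeated entry.

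First I would verify that the target sequence $f_3(s)$ is a genuine ascent sequence landing in $\CMcal{P}_1$: the new last entry equals $\asc$ of the preceding prefix, which by the "no $\M asc$ after position $k$" hypothesis is unchanged from $\asc$ of the modified prefix, so it is exactly an $\M asc$, giving membership in $\CMcal{P}_1\cap\A_n$; and I would check $\rpos(f_3(s))\ne0$ by exhibiting which right-to-left minimum is now repeated after its predecessor. Second, I would track the seven statistics: $\asc$ is unchanged because we trade one weak-descent-to-$x$ at the end for an ascent into $\asc$ but compensate in the interior relabeling (this bookkeeping is the delicate part); $\rep$ increases by $1$ since the final $\asc$-valued $\M asc$ is a repeat while we removed the repeated $x$ and the interior relabel is a bijection of value-multisets on the block; $\rmin$ decreases by $1$ as the former right-to-left minimum $x$ at position $k$ is no longer a right-to-left minimum (the value $x'<x$... wait, $x$ is smaller, so one must be careful: $x<x'$ as right-to-left minima increase left to right, so it is the final copy of $x$ that is destroyed and the position-$k$ copy that loses its "right-to-left minimum" status); $\rpos$ decreases by $1$; $\max$ is preserved because no maximal entry is touched (all maximals, being $\M asc$'s, occur at or before position $k$ by hypothesis); the $\zero$ and $\ealm$ corrections are exactly the indicator terms stated, arising respectively from whether $x=0$ (i.e.\ $\rpos(s)=0$, which for $s\in\T_3$ actually cannot happen, making that $\chi$-term vanish — I would note this) and from whether the last maximal sits precisely at position $k$, in which case $\ealm$ reads the entry right after it, which changes by exactly $1$ under the relabeling.

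Third, I would construct the inverse map on $\{s\in\A_n\cap\CMcal{P}_1:\rpos(s)\ne0\}$: given such a sequence, its last entry is $\asc(s)$; one locates the $\rpos(s)$-th right-to-left minimum, inverts the interior relabeling, replaces the final $\M asc$ by a second copy of that minimum's value, and checks the result lies in $\T_3$ (verifying the three defining conditions of $\T_3$ — the $\sebr$ equality, the consecutiveness of the two relevant $\Prm$ values, and the absence of $\M asc$'s after position $\Prm_{\rpos}$). The main obstacle I anticipate is the $\asc$-preservation: one must design the interior relabeling of the block $w_1\dots w_\ell$ (equivalently, of the run of entries strictly between the two former copies of $x$) so that exactly one ascent is lost in the interior to cancel the one gained at the tail, and simultaneously so that the ascent-sequence constraint $s_i\le\asc(s_1,\dots,s_{i-1})+1$ is maintained at every position of the modified block. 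Getting this relabeling to be an honest bijection — and checking it interacts correctly with the $\ealm$ indicator — is where the real work lies; the rest is routine verification along the lines of Lemmas \ref{L:rmincase2} and \ref{L:p1}.
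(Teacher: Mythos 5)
Your proposal starts from the right germ (delete one occurrence of $x=\Rmin(s)_{\rpos(s)}$ and append an $\M asc$), which is indeed what the paper's $f_3$ does, but it rests on an incorrect picture of $\T_3$ and this derails the construction. For $s\in\T_3$ the \emph{rightmost} occurrence of $x$ is not the last entry of $s$: it sits at the interior position $\Prm(s)_{\rpos(s)}$, is immediately followed by the rightmost $\Rmin(s)_{\rpos(s)+1}=x'$ (that is exactly what $\Prm(s)_{\rpos(s)+1}=\Prm(s)_{\rpos(s)}+1$ says), and the sequence then runs through the remaining right-to-left minima, ending with the largest one (recall $\rpos(s)<\rmin(s)-1$ on $\A^1$). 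Your normal form $s=(s_1,\dots,s_{k-1},x,x',w_1,\dots,w_\ell,x)$ with $k=\Prm(s)_{\rpos(s)}$ is even internally inconsistent: if another $x$ occurred after position $k$, then $s_k$ would not be a right-to-left minimum, so $k$ could not be $\Prm(s)_{\rpos(s)}$. The paper's map is simply ``delete the entry at position $\Prm(s)_{\rpos(s)}$ and append $\asc(s)$''; no relabeling of any block is needed, because the deleted $x$ sits in a valley $w>x<x'$ (the left neighbour $w$ satisfies $w\ge\sebr(s)=x'$), so its removal destroys exactly one ascent, which the appended $\M asc$ restores. The ``interior relabeling'' you introduce --- and explicitly leave undefined, calling it where the real work lies --- is therefore both unnecessary and a genuine gap: as written, your argument does not yet produce a map.

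You also have the direction of the statistic changes backwards. The convention in this lemma (forced by the displayed identities $\zero(s)=\zero(f_3(s))+\chi(\cdots)$ and $\ealm(s)=\ealm(f_3(s))-\chi(\cdots)$, by the codomain requirement $\rpos(f_3(s))\ne0$, and by the worked example after Lemma \ref{L:rmincase3}) is $\rep(s)=\rep(f_3(s))+1$, $\rmin(s)=\rmin(f_3(s))-1$, $\rpos(s)=\rpos(f_3(s))-1$: under $s\mapsto f_3(s)$ the statistic $\rmin$ \emph{increases} by one (the appended $\M asc$ is a new right-to-left minimum and no old one is destroyed, since the second-rightmost $x$ inherits the status of the deleted one) and $\rep$ \emph{decreases} by one (a repeated $x$ is removed, a brand-new value is appended). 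You design your surgery to achieve the opposite. Finally, your side remark that $\rpos(s)=0$ cannot occur on $\T_3$ is false: $s=(0,1,0,1,2)$ lies in $\T_3$ with $\rpos(s)=0$, $f_3(s)=(0,1,1,2,3)$, and $\zero(s)=2=\zero(f_3(s))+1$, so the term $\chi(\rpos(s)=0)$ is genuinely needed.
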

\begin{proof}
	For any ascent sequence $s\in\T_3$ with $\rpos(s)=i$, remove the rightmost $\Rmin(s)_{i}$ and add the integer $\asc(s)$ at the end. Let $f_3(s)$ be the resulting sequence and the map $f_3$ is clearly a bijection (see Figure \ref{F:b2}). Only when the entry $\ealm(s)$ on the $(\max(s)+1)$-th position of $s$ is also the $\rpos(s)$-th right-to-left minimum, we have $\ealm(f_3(s))=\ealm(s)+1$. It is not hard to verify other statistics.
\end{proof}
\begin{lemma}\label{L:rmincase3}
	There is a bijection 
	\begin{align*}
	f_4:\T_4\cap \A_n\rightarrow &\{s\in \A_n\cap \CMcal{P}_1^c: \rpos(s)\ne 0\}
	\end{align*}
	that transforms the quintuple 
	\begin{align*}
	(\asc,\rep,\max,\rmin,\rpos) \mbox{ to } (\asc,\rep,\max,\rmin-1,\rpos-1),
	\end{align*}
	and satisfies
	\begin{align*}
	\zero(s)&=\zero(f_4(s))+\chi(\rpos(s)=0),\\
	\ealm(s)&=\ealm(f_4(s))-\chi(\Prm(s)_{\rpos(s)}=\max(s)+1).
	\end{align*}
\end{lemma}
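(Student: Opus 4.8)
The plan is to build $f_4$ from a local modification of $s\in\T_4$ in the spirit of the map $f_3$ of Lemma~\ref{L:rmincase5}, adjusted so that the image lies in $\CMcal{P}_1^c$ rather than $\CMcal{P}_1$ and so that $\rep$ is preserved rather than changed by one. Write $i=\rpos(s)$. Since $\T_4=\A^2\cap\CMcal{P}_1^c$, we have $0\ne\sebr(s)<\Rmin(s)_{i+1}$, and (as is readily checked) all entries strictly between the two rightmost copies of $\Rmin(s)_{i}$ exceed $\Rmin(s)_{i}$; hence $\sebr(s)$ lies strictly between the consecutive right-to-left minima $\Rmin(s)_{i}$ and $\Rmin(s)_{i+1}$ and so is not itself a right-to-left minimum of $s$. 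The modification would act only inside the block running from the second rightmost copy of $\Rmin(s)_{i}$ to the end of $s$, and would be designed to accomplish exactly the following four things: create one new right-to-left minimum, equal in value to $\sebr(s)$ and occupying sorted position $i+1$ (the strict inequality $\sebr(s)<\Rmin(s)_{i+1}$ is what makes this possible and simultaneously guarantees that no existing right-to-left minimum is destroyed); prevent the last entry of the output from being an $\M asc$ (this is where the standing hypothesis that $s_{|s|}$ is not an $\M asc$ is used, and it is also what makes the modification reversible); leave the multiset of entries unchanged; and leave the values of $\asc$ and $\max$ unchanged.

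Granting such a modification, the claimed statistic transfer is then mostly a matter of inspection. Invariance of the multiset of entries gives $\rep(f_4(s))=\rep(s)$ and $\max(f_4(s))=\max(s)$ directly, and gives $\zero(s)=\zero(f_4(s))+\chi(\rpos(s)=0)$ exactly as in Lemmas~\ref{L:rmincase2} and~\ref{L:rmincase5}; the equality $\asc(f_4(s))=\asc(s)$ is arranged into the construction, the ascent lost where the displaced entry leaves being compensated by the ascent gained where it arrives. Creating precisely one new right-to-left minimum and destroying none forces $\rmin(f_4(s))=\rmin(s)+1$, and since nothing to the right of $\Prm(s)_{i}$ is disturbed, no index exceeding $i+1$ can acquire a double occurrence, so $\rpos(f_4(s))=i+1$; together these are the stated identities $\rmin(s)=\rmin(f_4(s))-1$ and $\rpos(s)=\rpos(f_4(s))-1$. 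The only point requiring real care is the identity for $\ealm$, and it is dispatched by the same local analysis as in the proof of Lemma~\ref{L:rmincase5}: by Definition~\ref{D:ealm}, $\ealm$ reads the entry $s_{\max(s)+1}$, the construction alters that entry only when $\Prm(s)_{i}=\max(s)+1$, and in that case it raises its value by exactly one, which gives $\ealm(s)=\ealm(f_4(s))-\chi(\Prm(s)_{i}=\max(s)+1)$.

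For the inverse, given $t\in\A_n\cap\CMcal{P}_1^c$ with $\rpos(t)\ne0$, I would single out the right-to-left minimum $\Rmin(t)_{\rpos(t)}$ together with the block ending at position $\Prm(t)_{\rpos(t)}$, demote that minimum (lowering both $\rmin$ and $\rpos$ by one) and undo the displacement, and verify that the result lies in $\T_4$, i.e.\ that it satisfies $0\ne\sebr<\Rmin_{\rpos+1}$ and does not end in an $\M asc$. I expect the genuine difficulty of the lemma to be exactly this bijectivity bookkeeping: reading off the partition of $\A^*$ in Figure~\ref{F:b1}, one has to check that in both directions the modification (and in particular the position it acts on) is forced and unambiguous, that it never violates the ascent-sequence constraint $s_j\le\asc(s_1,\ldots,s_{j-1})+1$, and that the images of $\T_4$ under $f_4$, of $\T_3$ under $f_3$, and of the remaining blocks $\T_{5,j}$ under their own maps are pairwise disjoint and together exhaust $\{s\in\A_n\cap\CMcal{P}_1^c:\rpos(s)\ne0\}$ (the $\CMcal{P}_1$ half of this bookkeeping being the job of $f_3$). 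Everything apart from the $\ealm$ identity is then the routine verification that the neighbouring lemmas dispose of with "It is not hard to verify other statistics."
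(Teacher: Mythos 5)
Your proposal never actually constructs $f_4$: everything after ``Granting such a modification'' is conditional on a map that you describe only through a wish-list of properties, and that construction is the entire content of the lemma (the statistics bookkeeping, as you correctly sense, is routine once the map exists). The paper's map is a one-line in-place substitution: for $s\in\T_4$ with $i=\rpos(s)$, replace the rightmost $\Rmin(s)_i$, i.e.\ the entry at position $\Prm(s)_i$, by the value $\sebr(s)$. Since every entry strictly between the two rightmost copies of $\Rmin(s)_i$ is $\ge\sebr(s)>\Rmin(s)_i$ and every entry after position $\Prm(s)_i$ is $\ge\Rmin(s)_{i+1}>\sebr(s)$, this single substitution turns $\sebr(s)$ into a new right-to-left minimum in sorted position $i+1$, keeps $\Rmin(s)_i$ as a right-to-left minimum via its second rightmost copy, preserves the local ascent pattern at $\Prm(s)_i$ (no ascent on the left, an ascent on the right, both before and after), keeps the last entry from being an $\M asc$, and is undone by replacing the rightmost $\Rmin(t)_{\rpos(t)}$ of an image $t$ back by $\Rmin(t)_{\rpos(t)-1}$.

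Beyond the missing construction, two of your stated design goals actively point away from this map. You require the \emph{multiset} of entries to be unchanged and picture ``the ascent lost where the displaced entry leaves being compensated by the ascent gained where it arrives''; but $f_4$ displaces nothing and does change the multiset (one copy of $\Rmin(s)_i$ becomes an extra copy of $\sebr(s)$). What is preserved is only the \emph{set} of distinct values, which is all that $\rep(s)=n-|\{s_1,\dots,s_n\}|$ requires, since $\Rmin(s)_i$ occurs at least twice in $s$ and $\sebr(s)$ already occurs in $s$. Insisting on multiset invariance while manufacturing a new right-to-left minimum of value $\sebr(s)$ would force you to physically move an existing copy of $\sebr(s)$, a genuinely different and harder map whose effect on $\asc$ and $\rpos$ you would then still have to control. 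Finally, the ``bijectivity bookkeeping'' you defer to is mostly not where the work lies: disjointness from the images of $f_3$ and the $f_{5,j}$ is automatic because their stated codomains are disjoint from $\{s\in\A_n\cap\CMcal{P}_1^c:\rpos(s)\ne0\}$; what must be checked is that the inverse substitution applied to any such $t$ lands in $\T_4$ (in particular that $0\ne\sebr<\Rmin_{\rpos+1}$ is recovered), which is exactly the reversibility of the explicit move above. Your $\ealm$ analysis is correct in spirit, but it too can only be completed once the map is pinned down (one needs that when $\Prm(s)_i=\max(s)+1$ the second rightmost $\Rmin(s)_i$ sits among the maximals, forcing $\sebr(s)=\Rmin(s)_i+1$).
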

\begin{proof}
	For any ascent sequence $s \in\T_4\cap \A_n$ with $\rpos(s)=i$, replacing the rightmost $\Rmin(s)_{i}$ by the integer $\sebr(s)$ yields an ascent sequence $f_4(s)\in\CMcal{P}_1^c$. The map $f_4$ is invertible and therefore bijective (see Figure \ref{F:b2}). Similar to Lemma \ref{L:rmincase5}, it is straightforward to check the corresponding statistics.
\end{proof}
\begin{example}
	For $s=(0,0,1,2,0,{\bf 1},2,{\bf 1},2,4,3,5)$, then $f_3(s)=(0,0,1,2,0,1,{\bf 2},{\bf 2},4,3,5,7)$, which, according to Lemma \ref{L:rmincase5}, is an ascent sequence with $\rpos(f_3(s))=2$ and the last entry $7$ is an $\M asc$. For $\tilde{s}=(0,0,1,2,0,{\bf 1},2,{\bf 1},4,3,5)$, then $f_4(\tilde{s})=(0,0,1,2,0, 1,{\bf 2},{\bf 2},4,3,5)$. By Lemma \ref{L:rmincase3}, it is an ascent sequence with $\rpos(f_4(\tilde{s}))=2$ and the last entry $5$ is not an $\M asc$.
\end{example}
\begin{figure}[ht]
	\centering
	\includegraphics[scale=0.65]{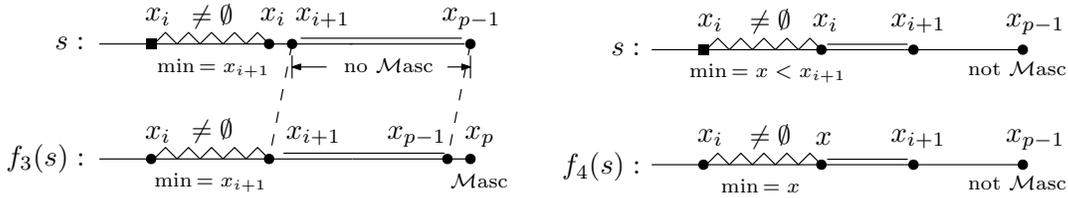}
	\caption{Two bijections $f_3$ and $f_4$ on ascent sequences $s\in \T_3 \dot\cup\T_4$ with $\rpos(s)=i$ and $\rmin(s)=p$. Here $x_i$ always denotes the  entry of the $i$-th right-to-left minimum, so $x_{p-1}$ is the last entry of $s$.\label{F:b2}}
\end{figure}
\begin{lemma}\label{L:f53}
	There is a bijection $f_{5,1}$ between the set $\T_{5,1}\cap \A_n$ and the set of ascent sequences $s\in \A_n$ such that $\rpos(s)\ne 0$ and
	\begin{itemize}
		\item the rightmost $\Rmin(s)_{\rpos(s)-1}$ is next to the second rightmost $\Rmin(s)_{\rpos(s)}$;
		\item the two rightmost $\Rmin(s)_{\rpos(s)}$ are not next to each other and there exists no $\M asc$ in between.
	\end{itemize}
	In addition, the bijection $f_{5,1}$ sends the septuple 
	\begin{equation*}
	(\asc,\rep,\max,\ealm,\rmin,\rpos) \mbox{ to } (\asc,\rep,\max,\ealm,\rmin,\rpos-1),
	\end{equation*}
	and satisfies
	\begin{equation*}
	\zero(s)=\zero(f_{5,1}(s))+\chi(\rpos(s)=0).
	\end{equation*}
\end{lemma}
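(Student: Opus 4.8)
The plan is to mimic the constructions of $f_3$ and $f_4$ in Lemmas \ref{L:rmincase5} and \ref{L:rmincase3}: take an ascent sequence $s\in\T_{5,1}\cap\A_n$ with $\rpos(s)=i$, and produce the target sequence by relocating the rightmost copy of $\Rmin(s)_{i-1}$ (the last right-to-left minimum sitting just before the second rightmost $\Rmin(s)_i$, which is available precisely because $\Prm(s)_{\rpos(s)+1}=\Prm(s)_{\rpos(s)}+1$ in the definition of $\T_{5,1}$). Concretely, I would first read off the combinatorial picture from Figure \ref{F:b1}: in $\T_{5,1}$ we have $\sebr(s)>\Rmin(s)_{i+1}$ and the two rightmost $\Rmin(s)_i$-entries, with everything strictly between them being $\ge\sebr(s)>\Rmin(s)_{i+1}$, and no $\M asc$ in that stretch. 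Removing the rightmost $\Rmin(s)_{i-1}$ and reinserting it (or equivalently shifting the block between the two copies of $\Rmin(s)_i$) should yield a sequence $s'$ in which the two rightmost $\Rmin(s')_{i-1}=\Rmin(s)_i$ are no longer adjacent but have no $\M asc$ between them, i.e. $\rpos(s')=i-1$, while the former rightmost $\Rmin(s)_{i-1}$ now sits adjacent to the second rightmost of those, matching the two bulleted conditions in the statement.

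The key steps, in order, are: (1) describe the map $f_{5,1}$ explicitly by specifying which entry is moved and where, with reference to Figure \ref{F:b2}-style bookkeeping; (2) verify that the image is a genuine ascent sequence — this requires checking that after the relocation every entry still satisfies $s_j\le\asc(s_1,\dots,s_{j-1})+1$, which hinges on the fact that the relocated entry is a right-to-left minimum (hence small) and that no $\M asc$ lies in the affected window, so no ascent count is disturbed in a way that violates the bound; (3) verify the image lies in the claimed target set, in particular that $\rpos$ has dropped to $i-1$ and the two bulleted adjacency/$\M asc$ conditions hold; (4) construct the inverse — given a sequence in the target set, the rightmost $\Rmin_{i-1}$ is adjacent to the second rightmost $\Rmin_i$, and moving it back to just after the rightmost $\Rmin_i$ recovers an element of $\T_{5,1}$; and (5) track the statistics. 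Since only a right-to-left minimum is being relocated (not deleted), $|s|$, $\asc$, $\rep$, $\max$, $\rmin$ and the set of maximal positions are all unchanged; $\rpos$ drops by $1$ by construction; $\ealm=s_{\max(s)+1}$ is unchanged because the relocation happens strictly to the right of position $\max(s)+1$ (the moved entry is to the right of the second rightmost $\Rmin_i$, which is at least as far right as all the small entries following the last maximal — here I would double check against the $\M asc$ condition that forces the moved block past $\max(s)+1$); and $\zero(s)=\zero(f_{5,1}(s))+\chi(\rpos(s)=0)$ since a zero is lost from the count exactly when the relocated minimum is $0$, i.e.\ when $i-1=-1$ is impossible, so in fact the $\chi$ term should be reconciled with the convention that a moved (not deleted) entry changes $\zero$ only in the boundary case — I expect this to reduce, as in Lemmas \ref{L:rmincase5} and \ref{L:rmincase3}, to the degenerate situation $\rpos(s)=0$ being excluded from the domain, making the correction term vacuous on $\T_{5,1}$ but present for uniformity of the recursive bookkeeping in Section \ref{Sec:7tuple}.

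The main obstacle I anticipate is step (2) together with the precise placement in step (1): getting the relocation rule exactly right so that the output is an ascent sequence and so that $\rpos$ decreases by exactly one (not more), which is where the hypotheses defining $\T_{5,1}$ — namely $\sebr(s)>\Rmin(s)_{\rpos(s)+1}$ and $\Prm(s)_{\rpos(s)+1}=\Prm(s)_{\rpos(s)}+1$ — must be used in full. In particular I must ensure that the entries strictly between the two rightmost $\Rmin_i$-copies do not themselves create a new, higher-indexed right-to-left minimum configuration after the move, and that the "no $\M asc$ in between" clause in the target description is forced rather than merely assumed; this is the analogue of the delicate $\ealm$-correction term $\chi(\Prm(s)_{\rpos(s)}=\max(s)+1)$ appearing in Lemmas \ref{L:rmincase5} and \ref{L:rmincase3}, and I would handle it by the same case analysis on whether the relocated minimum coincides with the position right after the last maximal. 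Once the map and its inverse are pinned down, the statistic verification is routine and parallels the earlier lemmas.
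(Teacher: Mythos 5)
The construction you propose does not match the structure of $\T_{5,1}$ and operates on the wrong entries, in the wrong direction. First, you misread the defining condition $\Prm(s)_{\rpos(s)+1}=\Prm(s)_{\rpos(s)}+1$: it says that the rightmost $\Rmin(s)_{i+1}$ sits immediately after the rightmost $\Rmin(s)_i$ (where $i=\rpos(s)$); it says nothing about the rightmost $\Rmin(s)_{i-1}$ being adjacent to the second rightmost $\Rmin(s)_i$, so the entry you plan to relocate is not where you think it is. You also import ``no $\M asc$ between the two rightmost $\Rmin(s)_i$'' into the definition of $\T_{5,1}$; that clause belongs to $\T_3$ and to the \emph{target} set of $f_{5,1}$, not to the source. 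Second, the direction is reversed. Throughout Sections \ref{S:thm3}--\ref{S:bij} the phrase ``transforms $\rpos$ to $\rpos-1$'' means $\rpos(s)=\rpos(f(s))-1$, i.e.\ the statistic \emph{increases} under the map --- this is forced by the displayed identity $\zero(s)=\zero(f_{5,1}(s))+\chi(\rpos(s)=0)$, by the requirement $\rpos\ne 0$ on the image (which would fail for sources with $\rpos(s)=1$ under your reading), and by the worked examples for $f_3,f_4$ (e.g.\ $\rpos(\tilde s)=1$ but $\rpos(f_4(\tilde s))=2$). The paper's $f_{5,1}$ inserts a new copy of $\Rmin(s)_{i+1}$ immediately after the second rightmost $\Rmin(s)_i$ and deletes the rightmost $\Rmin(s)_i$; the image then has $\rpos=i+1$, the two bulleted conditions hold at level $i+1$, and the ``no $\M asc$ in between'' property is \emph{derived} from the fact that the insertion adds exactly one ascent to every prefix ending inside the block, so no entry there can reach the raised $\M asc$ threshold. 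Your map relocates a copy of $\Rmin(s)_{i-1}$ and aims to drop $\rpos$ to $i-1$, which is a different (and unworkable, when $i=0$ or $i=1$) operation.

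Finally, your treatment of the $\zero$-correction is also off: under the correct map, when $\rpos(s)=0$ one deletes the rightmost $0$ (since $\Rmin(s)_0=0$ for every ascent sequence), so $\zero$ genuinely drops by one and the term $\chi(\rpos(s)=0)$ is not vacuous on $\T_{5,1}$. In short, the proposal is not a repairable variant of the paper's argument; the map itself has to be replaced by the insert-$\Rmin(s)_{i+1}$/delete-rightmost-$\Rmin(s)_i$ operation before any of the statistic bookkeeping can go through.
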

\begin{proof}
	For any ascent sequence $s\in \T_{5,1}$ with $\rpos(s)=i$, insert $\Rmin(s)_{i+1}$ right after the second rightmost $\Rmin(s)_i$ and remove the rightmost $\Rmin(s)_i$; see Figure \ref{F:c4}. Define the resulting sequence as $f_{5,1}(s)$. It is easily seen that $f_{5,1}$ is a bijection and  it fulfills all properties listed in this lemma.
\end{proof}
\begin{lemma}\label{L:f54}
	There is a bijection between $f_{5,2}$ between the set $\T_{5,2}\cap \A_n$ and the set of ascent sequences $s\in \A_n$ such that $\rpos(s)\ne 0$ and
	\begin{itemize}
		\item the rightmost $\Rmin(s)_{\rpos(s)-1}$ is not next to the second rightmost $\Rmin(s)_{\rpos(s)}$;
		\item the two rightmost $\Rmin(s)_{\rpos(s)}$ are not next to each other.
	\end{itemize}
	In addition, the bijection $f_{5,2}$ sends the quintuple 
	\begin{equation*}
	(\asc,\rep,\max,\rmin,\rpos) \mbox{ to } (\asc,\rep,\max,\rmin,\rpos-1),
	\end{equation*}
	and satisfies
	\begin{align*}
	\zero(s)&=\zero(f_{5,2}(s))+\chi(\rpos(s)=0),\\
	\ealm(s)&=\ealm(f_{5,2}(s))-\chi(\Prm(s)_{\rpos(s)}=\max(s)+1).
	\end{align*}
\end{lemma}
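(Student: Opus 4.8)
I plan to follow the same template as Lemmas~\ref{L:rmincase2}--\ref{L:f53}, mirroring in particular the construction of $f_{5,1}$. Given $s\in\T_{5,2}\cap\A_n$ with $\rpos(s)=i$, let $q$ denote the position of the second rightmost occurrence of the value $\Rmin(s)_i$ in $s$, so that $s$ reads
\[
\cdots\,\Rmin(s)_i\,w_1\,\Rmin(s)_i\,w_2\,\Rmin(s)_{i+1}\,\cdots,
\]
where the first displayed $\Rmin(s)_i$ sits at position $q$, the second at $\Prm(s)_i$, the displayed $\Rmin(s)_{i+1}$ at $\Prm(s)_{i+1}$, and $w_1$, $w_2$ are the factors occupying the positions strictly between $q$ and $\Prm(s)_i$, respectively between $\Prm(s)_i$ and $\Prm(s)_{i+1}$. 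First I would record three structural facts that drive the whole argument: $w_1\ne\varnothing$, since $\min w_1=\sebr(s)\ge\Rmin(s)_{i+1}\ge 1$ (as $0=\Rmin(s)_0<\Rmin(s)_1<\cdots$); $w_2\ne\varnothing$, since $\Prm(s)_{i+1}\ne\Prm(s)_i+1$; and every entry of $w_2$ is \emph{strictly} larger than $\Rmin(s)_{i+1}$, because the maximality of $\rpos(s)=i$ forces the value $\Rmin(s)_{i+1}$ to occur exactly once beyond $\Prm(s)_i$ while every entry beyond $\Prm(s)_i$ is at least $\Rmin(s)_{i+1}$. With this in hand I would define $f_{5,2}(s)$ to be the word obtained from $s$ by replacing the entry $\Rmin(s)_i$ at position $\Prm(s)_i$ with $\Rmin(s)_{i+1}$.

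The verification then splits into the familiar pieces. That $f_{5,2}(s)\in\A_n$ needs only the ascent-sequence inequality at position $\Prm(s)_i$, which follows from the same inequality at $\Prm(s)_i+1$ in $s$, the (descent) step into $\Prm(s)_i$, and the leading entry of $w_2$ exceeding $\Rmin(s)_{i+1}$; moreover, raising the entry at $\Prm(s)_i$ keeps the step into it non-ascending and the step out of it ascending, so $\asc$ is unchanged. A bookkeeping of right-to-left minima shows that $q$ becomes a right-to-left minimum while $\Prm(s)_i$ ceases to be one, all other right-to-left minima and their values being unaffected; hence $\rmin$ is preserved, the $i$-th right-to-left minimum merely moves from $\Prm(s)_i$ to $q$, and since $\Rmin(s)_{i+1}$ now occupies both $\Prm(s)_i$ and $\Prm(s)_{i+1}$ with nothing past $\Prm(s)_{i+1}$ disturbed, $\rpos(f_{5,2}(s))=i+1$. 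Preservation of $\rep$ is immediate ($\Rmin(s)_i$ still occurs, at $q$); $\max$ is preserved because $\Prm(s)_i$ lies past the initial maximal run $0,1,\dots,\max(s)-1$ and, since $w_1\ne\varnothing$ with all entries $\ge\Rmin(s)_{i+1}$, the substituted value cannot equal $\Prm(s)_i-1$; $\zero$ drops by one exactly when $\Rmin(s)_i=0$, i.e.\ when $\rpos(s)=0$; and $\ealm$ changes only if $\Prm(s)_i=\max(s)+1$, in which case $w_1$ is forced to be a tail of the maximal run, so $\sebr(s)=\Rmin(s)_i+1$, whence $\Rmin(s)_i<\Rmin(s)_{i+1}\le\sebr(s)$ pins $\Rmin(s)_{i+1}=\Rmin(s)_i+1$ and gives $\ealm(f_{5,2}(s))=\ealm(s)+1$. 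Finally, for the inverse I would send a target word $t$ (with $\rpos(t)=j\ge 1$ and the two bullet properties) to the word obtained by replacing the second rightmost occurrence of $\Rmin(t)_j$ among those after $\Prm(t)_{j-1}$ with $\Rmin(t)_{j-1}$: the two bullet conditions guarantee that this occurrence is adjacent neither to the rightmost $\Rmin(t)_{j-1}$ nor to $\Prm(t)_j$, so the result lies in $\T_{5,2}$, and one checks the two maps are mutually inverse and that the image of $f_{5,2}$ is exactly the stated set.

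The hard parts will be the $\asc$-invariance and the $\ealm$ bookkeeping. The first rests essentially on the \emph{strictness} of ``$w_2>\Rmin(s)_{i+1}$ entrywise'' — if one entry of $w_2$ equalled $\Rmin(s)_{i+1}$ the substitution would destroy an ascent — and the point is that this strictness is forced precisely by $i=\rpos(s)$ being maximal. The second requires unwinding the definitions of $\sebr$ and of the maximal run in the boundary case $\Prm(s)_i=\max(s)+1$ to pin down the sharp value $\Rmin(s)_{i+1}=\Rmin(s)_i+1$. Everything else is the same routine right-to-left-minimum bookkeeping already carried out for $f_2,f_3,f_4$ and $f_{5,1}$.
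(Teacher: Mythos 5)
Your proposal is correct and uses exactly the map the paper uses: replacing the rightmost occurrence of $\Rmin(s)_{\rpos(s)}$ (i.e.\ the entry at position $\Prm(s)_{\rpos(s)}$) by the value $\Rmin(s)_{\rpos(s)+1}$, with the inverse obtained by lowering the second rightmost occurrence of $\Rmin(t)_{\rpos(t)}$ back to $\Rmin(t)_{\rpos(t)-1}$. The paper simply declares the verification of the statistics ``straightforward,'' whereas you carry out the bookkeeping (nonemptiness of $w_1,w_2$, strictness of $w_2>\Rmin(s)_{i+1}$ via maximality of $\rpos$, and the boundary case $\Prm(s)_i=\max(s)+1$ for $\ealm$) explicitly and correctly.
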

\begin{proof}
	For any ascent sequence $s\in \T_{5,2}$ with $\rpos(s)=i$, replace the rightmost $\Rmin(s)_{\rpos(s)}$ by $\Rmin(s)_{\rpos(s)+1}$; see Figure \ref{F:c4}. Define the resulting sequence to be $f_{5,2}(s)$. It is straightforward to verify the change of statistics. 	
\end{proof}
\begin{figure}[ht]
	\centering
	\includegraphics[scale=0.7]{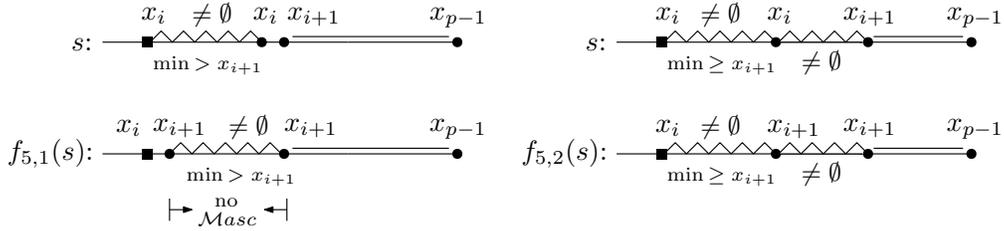}
	\caption{The bijections $f_{5,1}$ and $f_{5,2}$ in Lemma \ref{L:f53} and \ref{L:f54}. Here $x_i=\Rmin(s)_{i}$ and $i=\rpos(s)$. \label{F:c4}}
\end{figure}

The next bijection on the subset $\T_{5,3}\dot\cup\T_{5,4}$ (see Proposition \ref{P:rmin4}) is more sophisticated and a new statistic $\min \CMcal{M}asc$ is defined in order to describe the image set.
\begin{definition} (statistic $\min \CMcal{M}asc$)
	For any ascent sequence $s$, define $\min\M asc(s)$ to be the {\bf min}imal {\bf $\M asc$} (see Definition \ref{D:masc}) between the two rightmost entries $\Rmin(s)_{\rpos(s)}$. If no such $\M asc$ exists, then we assume $\min \M asc(s)=0$.
	
	For example, given $s=(0,1,2,1,3,{\bf 4},4,3,5)$, we have $\rpos(s)=2$ and $\min \M asc(s)=4$ because $4$ is the minimal $\M asc$ between the two rightmost entries $\Rmin(s)_2=3$.
\end{definition}	

\begin{proposition}\label{P:rmin4}		
	There is a bijection $f_{5}^*$ between the set $(\T_{5,3}\dot\cup\T_{5,4})\cap \A_n$  and the set $\B$ of ascent sequences $s\in \A^*\cap\A_n$ with the following properties:
	\begin{itemize}
		\item $\rpos(s)\ne 0$ and $\min \M asc(s)\ne 0$;
		\item the rightmost $\Rmin(s)_{\rpos(s)-1}$ is next to the second rightmost $\Rmin(s)_{\rpos(s)}$;
	\end{itemize}
	Furthermore, $f_{5}^*$ transforms the quadruple
	\begin{equation*}
	(\asc,\rep,\rmin,\rpos) \textrm{ to }(\asc,\rep,\rmin,\rpos-1),
	\end{equation*}
	and satisfies
	\begin{equation*}
	\zero(s)=\zero(f_{5}^*(s))+\chi(\rpos(s)=0).
	\end{equation*}
	If the second rightmost $\Rmin(s)_{\rpos(s)}$ is a maximal of $s$, then $f_5^*$ transforms the pair $$(\max,\ealm)\mbox{ to }(\max-1,\ealm-1);$$ otherwise it transforms the pair $$(\max,\ealm)\mbox{ to }(\max,\ealm).$$
\end{proposition}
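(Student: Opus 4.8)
The plan is to construct $f_5^*$ by a two-case analysis mirroring the split $\T_{5,3}\dot\cup\T_{5,4}$, and to describe the inverse explicitly on the target set $\B$ so that bijectivity is immediate. Recall that in $\T_{5,4}$ we have $\sebr(s)=\Rmin(s)_{\rpos(s)+1}$ with the two positions $\Prm(s)_{\rpos(s)}$ and $\Prm(s)_{\rpos(s)+1}$ adjacent (but $s\notin\T_3$, so a $\M asc$ does occur after $\Prm(s)_{\rpos(s)}$), while in $\T_{5,3}$ we have $0\ne\sebr(s)<\Rmin(s)_{\rpos(s)+1}$ and the last entry $s_{|s|}$ \emph{is} an $\M asc$ (this is what $\T_{5,3}=\A^2\setminus\T_4$ encodes, since $\T_4$ collected the non-$\M asc$ last-entry case). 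First I would handle $\T_{5,4}$: here one deletes the rightmost copy of $x_i=\Rmin(s)_{\rpos(s)}$ and re-inserts $x_i$ directly after the \emph{second} rightmost $x_i$ (equivalently, one "slides" the block between the two rightmost $x_i$ one step left past the rightmost $x_i$), which decreases $\rpos$ by one because the rightmost $x_{i-1}$ becomes adjacent to the new second-rightmost $x_i$; the presence of a $\M asc$ strictly between the two rightmost $x_i$ in $s$ is exactly what guarantees $\min\M asc(f_5^*(s))\ne 0$ in the image. For $\T_{5,3}$ the move is instead to replace the rightmost $x_i$ by $\sebr(s)$ — which is legal since $\sebr(s)<\Rmin(s)_{\rpos(s)+1}$ keeps it a right-to-left minimum situation shifted down — combined with a bookkeeping step on the terminal $\M asc$ so that again $\rpos$ drops by one and a $\M asc$ survives between the new two rightmost minima.

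The second step is to verify the statistic changes. That $\rpos$ decreases by exactly one, $\rmin$ is unchanged, and $\asc,\rep$ are unchanged should follow directly from the fact that in both cases we neither create nor destroy an ascent, neither create nor destroy a distinct value (we move or recolor an existing right-to-left minimum into another slot that was already occupied by the same value pattern), and the number of right-to-left minima is preserved because we are permuting/relabelling among the existing minima. The clause $\zero(s)=\zero(f_5^*(s))+\chi(\rpos(s)=0)$ is handled exactly as in Lemmas~\ref{L:rmincase5}--\ref{L:f54}: a zero can only be gained or lost when $\rpos(s)=0$, which is excluded here since $s\in\T_{5,3}\dot\cup\T_{5,4}\subseteq\A^1\dot\cup\A^2$ forces $\rpos(s)\ge 1$, so in fact $\zero$ is simply preserved. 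For the $(\max,\ealm)$ dichotomy: if the second rightmost $\Rmin(s)_{\rpos(s)}$ is a maximal of $s$, then our move shifts the last maximal one position to the left and the entry $\ealm$ sitting right after it changes by exactly $-1$ (it was the value that, after the move, occupies the old maximal's slot), giving $(\max,\ealm)\mapsto(\max-1,\ealm-1)$; otherwise the set of maximals and the entry following the last maximal are untouched.

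The third step is to pin down the image set and invert. I would argue that after the move, $f_5^*(s)$ lies in $\A^*$, has $\rpos\ne 0$ (since a pair of equal right-to-left minima still appears after the previous one — indeed $\rpos$ merely dropped by one), has $\min\M asc\ne 0$ (a $\M asc$ survives strictly between the two rightmost minima, by the defining conditions separating $\T_{5,3},\T_{5,4}$ from $\T_3,\T_4$), and has the rightmost $\Rmin(s)_{\rpos(s)-1}$ adjacent to the second rightmost $\Rmin(s)_{\rpos(s)}$ (this adjacency is precisely what is produced by both moves, and it is what signals to the inverse where to act). Conversely, given $s\in\B$, the two rightmost minima together with the $\M asc$ guaranteed between them determine uniquely whether $s$ came from $\T_{5,4}$ (reconstruct by the reverse slide) or from $\T_{5,3}$ (reconstruct by the reverse replacement together with re-appending the terminal $\M asc$), the disambiguation being whether the block between the two rightmost minima, after undoing, produces $\sebr$ equal to or below $\Rmin_{\rpos+1}$. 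The main obstacle will be this last point — showing that the two reconstruction procedures have disjoint, jointly exhaustive images filling all of $\B$, and that each is genuinely inverse to the corresponding forward move on the nose (including the terminal-$\M asc$ bookkeeping in the $\T_{5,3}$ case, which is the fiddliest part); I expect this to require a careful case check on the relative position of $\Prm(s)_{\rpos(s)}$, $\Prm(s)_{\rpos(s)+1}$ and the location of $\min\M asc(s)$, much in the spirit of the verifications in Lemmas~\ref{L:f53} and~\ref{L:f54} but with one more layer of casework. Everything else is routine once the moves are drawn as in Figures~\ref{F:b2} and~\ref{F:c4}.
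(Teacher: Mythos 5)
Your two core moves do not establish the proposition; the gap is structural, not just a matter of unfinished verification. For the $\T_{5,4}$ part, the proposed ``slide'' (delete the rightmost $x_i=\Rmin(s)_{\rpos(s)}$ and re-insert it next to the second rightmost $x_i$) does not decrease $\rpos$: by Definition~\ref{D:rpos}, $\rpos(s)\ge i$ holds exactly when $x_i$ occurs at least twice after the rightmost $x_{i-1}$, and that is unchanged by moving the rightmost copy around within that range. Worse, the output has its two rightmost $\Rmin_{\rpos}$ adjacent, so $\min\M asc=0$ and the sequence cannot lie in $\B$, whose defining property is an $\M asc$ strictly \emph{between} the two rightmost $\Rmin_{\rpos}$. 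This points at the real difficulty you have skipped: landing in $\B$ requires manufacturing an $\M asc$ in a prescribed window while keeping $\asc$, $\rep$ and $\rmin$ fixed, and no rearrangement of existing entries does this. The paper's construction instead inserts a \emph{new} value $m$ (a fresh $\M asc$), increments every entry $\ge m$ to its right, and then converts all intermediate copies of $x_i$ into copies of $m$ by the reversible substitution rules $\R_1$, $\R_2$; on the $\T_{5,4}$ side it further needs the insertion rules $\R_3$, $\R_4$, a three-case split by the location of the first $\M asc$ after the rightmost $x_i$, and an induction on $\rmin(s)-\rpos(s)$ in which {\sf Step}~3 recursively invokes $f_{5,4}^{-1}$. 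None of that is present in, or replaceable by, your ``bookkeeping step''.

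For the $\T_{5,3}$ part you have only the easy half: replacing the rightmost $x_i$ by $\sebr(s)$ and stripping the terminal $\M asc$ (the paper's map $g$) changes the length and the ascent count, so a second, length- and ascent-restoring map ($g_{5,3}$, with its four cases distinguished by adjacency of $x_{i-1}$ to $x_i$ and of the two rightmost $x_i$) is indispensable, and it is precisely where the rules $\R_1,\R_2$ and the delicate surjectivity check onto $\B_1$ live. Two further assertions are false as stated: the claim that $s\in\A^1\dot\cup\A^2$ forces $\rpos(s)\ge 1$ fails (e.g.\ $s=(0,1,0,2,3)\in\T_{5,3}$ has $\rpos(s)=0$, which is why the proposition carries the correction term $\chi(\rpos(s)=0)$), and your criterion for disambiguating the inverse (``whether the block, after undoing, produces $\sebr$ equal to or below $\Rmin_{\rpos+1}$'') is circular, whereas the paper splits $\B$ into $\B_1$ and $\B-\B_1$ by whether $\min\M asc(\hat s)$ reappears after the rightmost $\Rmin(\hat s)_{\rpos(\hat s)}$ --- a condition readable from $\hat s$ alone.
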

We divide Proposition \ref{P:rmin4} into two Lemmas (Lemma \ref{L:rmin4} and \ref{L:rmin5}) and prove them in subsection \ref{ss:r12} and \ref{ss:r34} separately as the proofs employ different substitution/insertion rules.

\subsection{Two substitution rules $\R_1$ and $\R_2$}\label{ss:r12}
\begin{lemma}\label{L:rmin4}	
	Proposition \ref{P:rmin4} is true when $f_5^*$ is restricted between $\T_{5,3}\cap \A_n$ and the set $\B_1$ of ascent sequences $s$ from $\B$ (defined in Proposition \ref{P:rmin4}) where the non-zero integer $\min \M asc(s)$ does not appear after the rightmost $\Rmin(s)_{\rpos(s)}$.

\end{lemma}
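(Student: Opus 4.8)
The plan is to construct the bijection $f_5^*$ restricted to $\T_{5,3}\cap\A_n$ explicitly via a substitution rule, which the paper anticipates calling $\R_1$. Recall that $s\in\T_{5,3}$ means $s\in\A^2$ (so $0\ne\sebr(s)<\Rmin(s)_{\rpos(s)+1}$) but $s\notin\T_4$, i.e.\ the last entry $s_{|s|}$ \emph{is} an $\M asc$. Set $i=\rpos(s)$, write $x_i=\Rmin(s)_i$ and $x_{i+1}=\Rmin(s)_{i+1}$ for the $i$-th and $(i+1)$-st right-to-left minima, and let $b=\sebr(s)$ be the smallest entry strictly between the two rightmost occurrences of $x_i$. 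The defining inequality $x_i<b<x_{i+1}$ together with $s\in\T_{5,3}$ places us in the regime where the segment between the two rightmost $x_i$'s contains an $\M asc$ that is $\le b$; the substitution rule $\R_1$ should replace the rightmost $x_i$ by $b$ (or by the appropriate $\M asc$ value $\min\M asc(s)$), thereby promoting $x_i$ out of the right-to-left minima set so that $\rpos$ drops by one, while simultaneously arranging that in the image the rightmost $\Rmin_{\rpos-1}$ becomes adjacent to the second rightmost $\Rmin_{\rpos}$, matching the description of $\B_1$. The constraint that $\min\M asc(s)$ does not reappear after the rightmost $x_i$ is exactly what guarantees the image lands in $\B_1$ rather than its complement in $\B$.

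The key steps, in order, are: (1) verify that applying $\R_1$ to $s\in\T_{5,3}\cap\A_n$ yields a genuine ascent sequence of the same length $n$ — this requires checking the ascent-sequence condition $s'_j\le\asc(s'_1,\dots,s'_{j-1})+1$ only at the position where the substitution occurs and at positions to its right, and here the inequalities $x_i<b<x_{i+1}\le\asc(\cdots)+1$ do the work, together with the fact that changing one right-to-left minimum to a strictly larger value that already appears earlier does not create or destroy ascents to the left; (2) check that $\asc$ and $\rep$ are preserved (the substituted value $b$ already occurs among the entries, so $\rep$ is unchanged, and one verifies directly that no ascent is gained or lost), that $\rmin$ is preserved (we remove $x_i$ from $\Rmin$ but the adjacency reorganization in the image restores the count — this is the delicate bookkeeping point), and that $\rpos$ decreases by exactly one; (3) handle the $\zero$ correction term $\chi(\rpos(s)=0)$, which is vacuous here since $\rpos(s)=i\ne0$ is forced by $s\in\T_{5,3}$, so actually $\zero(s)=\zero(f_5^*(s))$ in this case; (4) treat the $(\max,\ealm)$ dichotomy: if the second rightmost $x_i=\Rmin(s)_{\rpos(s)}$ is a maximal (i.e.\ equals its position minus one), then the substitution at the rightmost occurrence interacts with the maximal-entry count and $\ealm$ shifts down by one alongside $\max$; otherwise both are untouched — this should be read off the same local picture; (5) construct the inverse map on $\B_1$ explicitly (locate the rightmost $\Rmin_{\rpos(s)}$, which by the $\B_1$ conditions is preceded appropriately, and insert back a copy of $x_{\rpos+1}$ — or rather restore the old $x_i$ — in the correct slot) and confirm it undoes $\R_1$, giving bijectivity.

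The main obstacle I expect is step (2), specifically showing $\rmin$ is preserved and that $\rpos$ drops by exactly one (not more). Removing $x_i$ from the set of right-to-left minima naively lowers $\rmin$ by one, so the substitution rule $\R_1$ must be designed to compensate — presumably the value $b=\sebr(s)$ or $\min\M asc(s)$ that replaces $x_i$ becomes a \emph{new} right-to-left minimum in the image (because everything to its right that was between the two old $x_i$'s is now $\ge b$, and $b<x_{i+1}$), so the net count is unchanged and the position of the new minimum is exactly where $\rpos$ should point after decrementing. Pinning down that this new entry really is a right-to-left minimum in $f_5^*(s)$ and that no \emph{other} entry accidentally becomes or stops being a right-to-left minimum, across all the sub-cases encoded in the definition of $\T_{5,3}$ (in particular the boundary case $\rpos(s)=\rmin(s)-1$), will require carefully tracking the segment structure; the figures referenced in the paper (Figure~\ref{F:b1}, and the forthcoming figure for $\R_1$) are essentially standing in for this case analysis. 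Everything else — the ascent-sequence validity and the $\asc$, $\rep$, $\zero$, $\max$, $\ealm$ bookkeeping — is a routine, if tedious, local verification once the correct form of $\R_1$ is fixed.
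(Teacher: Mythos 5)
Your proposed map --- a single substitution replacing the rightmost $\Rmin(s)_{\rpos(s)}$ by $b=\sebr(s)$ (or by some $\M asc$ value) --- is essentially the map $f_4$ that the paper uses for $\T_4$, and it cannot prove this lemma. Three concrete failures. First, the statistics come out wrong: since $x_i<b<x_{i+1}$, after the substitution the value $b$ becomes a new right-to-left minimum \emph{and} the value $x_i$ remains one (its second-rightmost occurrence is now its rightmost, with everything after it strictly larger), so the set of right-to-left minima gains a value on net; this is precisely why Lemma \ref{L:rmincase3} records a shift in $\rmin$ for $f_4$, whereas Proposition \ref{P:rmin4} requires $\rmin$ to be preserved on $\T_{5,3}$. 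Your own worry in step (2) is thus fatal to your construction rather than a bookkeeping point. Second, a single value replacement at one position cannot manufacture the structure of $\B_1$: it does not create the required adjacency between the rightmost $\Rmin_{\rpos-1}$ and the second rightmost $\Rmin_{\rpos}$ in the image, and it does not guarantee that a nonzero $\min\M asc$ exists between the two rightmost occurrences of the new distinguished minimum (you conflate this condition, which defines the \emph{image} set $\B$, with a property of the domain $\T_{5,3}$, where no such $\M asc$ need exist). Third, you never use the feature that actually separates $\T_{5,3}$ from $\T_4$, namely that the last entry of $s$ is an $\M asc$; your map leaves that trailing $\M asc$ untouched. (A smaller error: $\rpos(s)=0$ is \emph{not} excluded on $\T_{5,3}$, e.g.\ $s=(0,1,0,2)$, so the $\chi(\rpos(s)=0)$ correction is not vacuous.)

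The paper's proof is structurally different and much heavier. It factors the restriction of $f_5^*$ as $g_{5,3}\circ g$, where $g$ performs your substitution \emph{and} deletes the trailing $\M asc$, dropping the length to $n-1$ and landing in $\{s\in\A_{n-1}:\rpos(s)\ne0\}$; the map $g_{5,3}$ then restores the length by inserting a new $\M asc$ value $m$ inside the block between occurrences of $x_i$, shifting all larger entries up by one, and converting every intermediate \emph{non}-rightmost occurrence of $x_i$ into an $m$ via the substitution rules $\R_1$ and $\R_2$ (note these rules act on the non-rightmost occurrences and keep the rightmost one --- the opposite of your reading). The definition of $g_{5,3}$ requires four cases, according to whether the rightmost $x_{i-1}$ is adjacent to an $x_i$ and whether the two rightmost $x_i$'s are adjacent to each other, and surjectivity onto $\B_1$ is established by reading off, from the local configuration around $\min\M asc(\hat s)$, which of the four cases an arbitrary $\hat s\in\B_1$ must have come from. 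None of this --- the length-reducing/length-restoring factorization, the insertion of a fresh $\M asc$, or the case analysis driving the inverse --- is present in or recoverable from your outline, so the proposal has a genuine gap.
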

Two substitution rules $\R_1$ and $\R_2$ are of central importance in the construction of this bijection, so we introduce them before proving Lemma \ref{L:rmin4}.

The key observation is that all substitutions (1)--(3) in $\R_1$ and (4)--(7) in $\R_2$ are reversible, by which all five Euler--Stirling statistics $\asc$, $\rep$, $\max$, $\zero$ and $\rmin$ are remained the same.  

For convenience, given an ascent sequence $s$, let $x_j=\Rmin(s)_j$ for all $0\le j<\rmin(s)$.

Rule $\CMcal{R}_1$:  For any ascent sequence $s$ such that for some $i$ the entry $x_i$ appears at least twice after the rightmost $x_{i-1}$, we will replace each non-rightmost $x_i$ by an $\M asc$ $m$ of $s$ as long as 
\begin{enumerate}[label=(\roman*)]
	\item $x_i<m$;
	\item $x_i$ is located after the leftmost $m$ and the rightmost $x_{i-1}$; 
	\item all entries between this $x_i$ and the rightmost $x_i$ are not equal to $m$. 
\end{enumerate}
This substitution procedure starts with the {\em first} $x_i$ that satisfies (ii), and then proceed with other non-rightmost $x_i$'s from left to right. 

Let $k_1$ and $k_2$ be the left and right neighbors of a given non-rightmost $x_i$ respectively, then ($k_1>x_i$ or $k_1=x_{i-1}$) and $m\ne k_2\ge x_i$, so there are only three possible scenarios:
\begin{enumerate}
	\item If one of the followings is true (see Figure \ref{F:b3}),
	\begin{itemize}
		\item $x_{i}<k_1<m$ and $x_i<k_2<m$;
		\item ($k_1\ge m$ or $k_1=x_{i-1}$) and ($k_2>m$ or $k_2=x_i$),
	\end{itemize}
	then replace the entry $x_i$ directly by $m$;	
	\item otherwise if  (see Figure \ref{F:b4}),
	\begin{itemize}
		\item $x_{i}<k_1<m$ and ($k_2>m$ or $k_2=x_i$);
	\end{itemize}
	then insert $m$ right before the leftmost entry that is to the left of $x_i$ and all entries between it and $k_1$ inclusive are greater than $x_i$ and smaller than $m$; afterwards remove $x_i$;
	\item otherwise (see Figure \ref{F:b4}),
	\begin{itemize}
		\item  ($k_1\ge m$ or $k_1=x_{i-1}$) and $x_i<k_2<m$, 
	\end{itemize}
	then insert $m$ right after the rightmost entry that is to the right of $x_i$ and all entries between it and $k_2$ inclusive are greater than $x_i$ and smaller than $m$; afterwards remove $x_i$.
\end{enumerate}	
\begin{figure}[ht]
	\centering
	\includegraphics[scale=0.75]{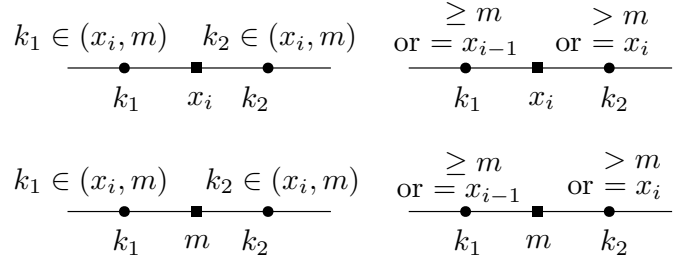}
	\caption{Substitution $(1)$ in rule $\R_1$. Here $x_i=\Rmin(s)_{i}$ and $i=\rpos(s)$.\label{F:b3}}
\end{figure}
\begin{figure}[ht]
	\centering
	\includegraphics[scale=0.85]{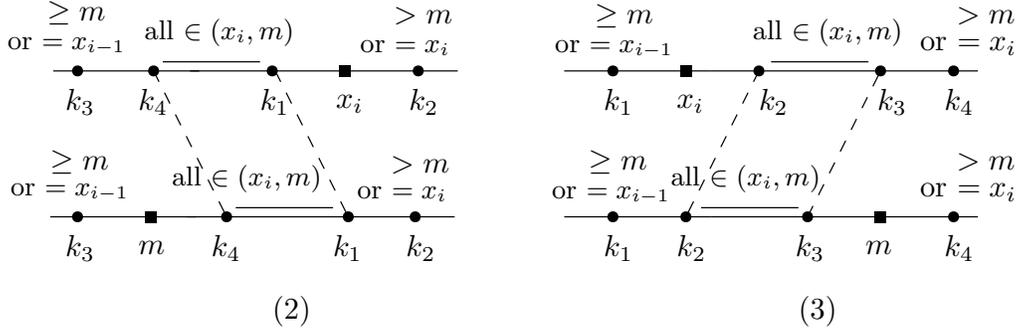}
	\caption{Substitutions $(2)$ in rule $\R_1$ and $(3)$ in rules $\R_1,\R_2$. Here $x_i=\Rmin(s)_{i}$ with $i=\rpos(s)$.\label{F:b4}}
\end{figure}

\begin{example}
	Given an ascent sequence $s=(0,1,2,0,1,4,1,2,1,1)$ where $x_1=1$ appears at least twice after the rightmost $x_{0}=0$ and $m=4$ is an $\M asc$, we will replace all non-rightmost $1$'s that are located after the leftmost $4$ by integers $4$ according to Rule $\R_1$:
	\begin{align*}
	s&=\,(0,1,2,0,1,4,{\bf 1},2,1,1) \quad \mbox{ by substitution } (3) \mbox{ of } \R_1,\\
	&\rightarrow (0,1,2,0,1,4,2,4,{\bf 1},1) \quad \mbox{ by substitution } (1) \mbox{ of } \R_1,\\
	&\rightarrow (0,1,2,0,1,4,2,4,4,1).
	\end{align*}
	It is easy to verify that $\asc,\rep,\zero,\max,\rmin$ are preserved under the rule $\R_1$.
\end{example}
Rule $\R_2$: in addition to the conditions (i), (ii) and (iii) listed in $\R_1$, here we require that
\begin{enumerate}[label=(\roman*)]
	\setcounter{enumi}{3}
	\item the two rightmost $x_i$ are not next to each other;
\end{enumerate}
Like $\R_1$, the procedure starts with the {\em first} $x_i$ that satisfies (ii), and proceed with other non-rightmost $x_i$'s from left to right.

Let $k_1$ and $k_2$ be the left and right neighbours of a given non-rightmost $x_i$ respectively, then ($k_1>x_i$ or $k_1=x_{i-1}$) and $m\ne k_2\ge x_i$, so there are four possible scenarios:
\begin{enumerate}
	\setcounter{enumi}{3}
	\item If $k_2=x_i$, then $k_2$ is not a right-to-left minimum (because of (iv)). Assume that $k_2$ is followed by exactly $k$ identical entries $x_i$ that are not right-to-left minima, then remove $k_2$ and these $k$ entries $x_i$, substitute $x_i$ by $m$ according to (5)--(7) below and finally add $(k+1)$ identical entries $m$ after the newly inserted $m$;
	\item otherwise $k_2\ne x_i$, if one of the followings is true (see Figure \ref{F:b5}),
	\begin{itemize}
		\item $x_i<k_1<m$ and $x_i<k_2<m$, 
		\item ($k_1>m$ or $k_1=x_{i-1}$) and $k_2>m$, 
	\end{itemize}
	then replace the entry $x_i$ by $m$;
	\item otherwise if (see Figure \ref{F:b5})
	\begin{itemize}
		\item $x_i<k_1\le m$ and $k_2>m$,
	\end{itemize}
	then insert $m$ right after the rightmost entry that is to the right of $x_i$ and all entries between it and $k_2$ inclusive are greater than $m$; afterwards remove $x_i$;
	\item otherwise, do $(3)$ of $\R_1$ (see Figure \ref{F:b4}).
\end{enumerate}

\begin{figure}[ht]
	\centering
	\includegraphics[scale=0.75]{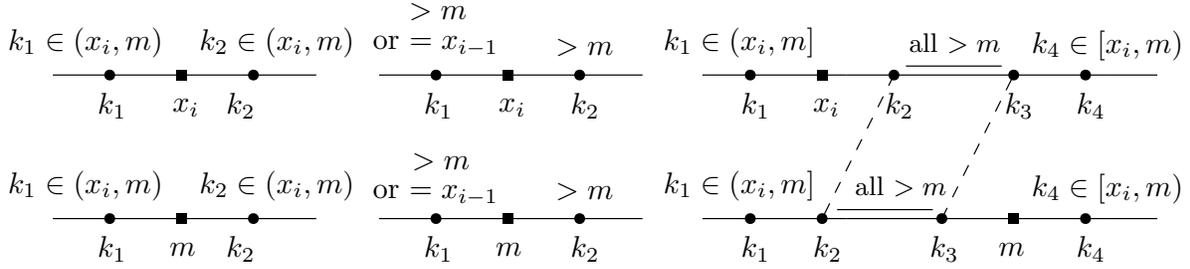}
	\caption{Substitution $(5)$ and $(6)$ in rule $\R_2$. Here $x_i=\Rmin(s)_{i}$ with $i=\rpos(s)$.\label{F:b5}}
\end{figure}
\begin{example}
	Given an ascent sequence $s=(0,1,2,0,1,4,4,1,5,2,1,3,1)$ where $x_1=1$ appears at least twice after the rightmost $x_{0}=0$ and $m=4$ is an $\M asc$, we will replace all non-rightmost $1$'s that are located after the leftmost $4$ by integers $4$ according to Rule $\R_2$:
	\begin{align*}
	s&=\,(0,1,2,0,1,4,4,{\bf 1},5,2,1,3,1) \quad \mbox{ by substitution } (6) \mbox{ of } \R_2,\\
	&\rightarrow (0,1,2,0,1,4,4,5,4,2,{\bf 1},3,1) \quad \mbox{ by substitution } (5) \mbox{ of } \R_2,\\
	&\rightarrow (0,1,2,0,1,4,4,5,4,2,4,3,1) .
	\end{align*}
	It is easy to verify that $\asc,\rep,\zero,\max,\rmin$ are preserved under the rule $\R_2$.
\end{example}

\begin{remark}
	The driving idea to define two different substitution rules $\R_1,\R_2$ is that {\sf Case} $1,2$ and {\sf Case} $3,4$ in the proof of Lemma \ref{L:rmin4} have to be treated differently.
\end{remark}

We are now in a position to complete the proof of Lemma \ref{L:rmin4}.

\begin{proof}
We start with showing the bijection
\begin{align*}
g:\T_{5,3}\cap \A_n\rightarrow \{s\in \A_{n-1}: \rpos(s)\ne 0\}.
\end{align*}	
For any ascent sequence $s\in \T_{5,3}\cap\A_n$ with $\rpos(s)=i$, replacing the rightmost $\Rmin(s)_i$ by $\sebr(s)$ and removing the last entry leads to an ascent sequence $s^*$ with $\rpos(s^*)=i+1$. Define $g(s)=s^*$ and clearly $g$ is invertible, so $g$ is a bijection. Similar to Lemma \ref{L:rmincase3}, it is straightforward to verify that $g$ transforms the quadruple 
\begin{align*}
(\asc,\rep,\max,\rmin,\rpos) \mbox{ to } (\asc+1,\rep,\max,\rmin,\rpos-1),
\end{align*}
and satisfies $\zero(g(s))=\zero(s)+\chi(\rpos(s)=0)$. If ${\sf Prm}_{\rpos(s)}\ne \max(s)+1$, then $\ealm(s)=\ealm(g(s))$; otherwise $\ealm(s)=\ealm(g(s))-1$.

We next define the map
\begin{align}\label{E:g51} 
g_{5,3}:\{s\in\A_{n-1}:\rpos(s)\ne 0\}\rightarrow \B_1
\end{align}
and then prove $g_{5,1}$ is a bijection so that
\begin{align*}
f_{5,3}:=g_{5,3}\circ g:\T_{5,3}\cap \A_n\rightarrow \B_1
\end{align*}
is the desired bijection $f_5^*$ when restricted to the subset $\T_{5,3}\cap \A_n$.

For any ascent sequence $s\in\A_{n-1}$ with $\rpos(s)=i\ne 0$, we discuss all possible scenarios and define the resulting sequence to be $g_{5,3}(s)$ in each case.

{\sf Case} $1$ (see Figure \ref{F:b6}): if the rightmost $\Rmin(s)_{i-1}$ is next to an $\Rmin(s)_{i}$ and there is at least one $\M asc$ between the first two $\Rmin(s)_{i}$ that are after the rightmost $\Rmin(s)_{i-1}$, let the smallest one be $m$, then 
\begin{itemize}
	\item insert $m+1$ right after the leftmost $m$;
	\item replace all entries $y$ after the inserted $m+1$ by $y+1$ if $y\ge m$;
	\item if there are only two $\Rmin(s)_{i}$ after the rightmost $\Rmin(s)_{i-1}$, then stop; otherwise, replace each $\Rmin(s)_{i}$ that appears between the leftmost $m$ and the rightmost $\Rmin(s)_i$ by an $m$ according to rule $\CMcal{R}_1$.
\end{itemize}
\begin{figure}[ht]
	\centering
	\includegraphics[scale=0.6]{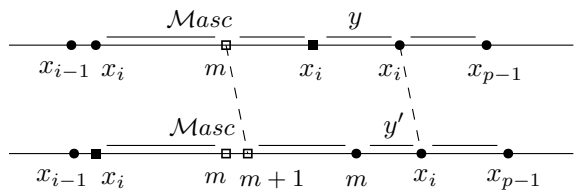}
	\caption{{\sf Case} $1$: the rightmost $x_{i-1}$ is next to $x_i$ and $m$ is the smallest $\M asc$ between the first two $x_i$'s that are after $x_{i-1}$. Here $x_i=\Rmin(s)_{i}$ with $i=\rpos(s)$ and $y'=y+1$ if $y\ge m$; otherwise $y'=y$. \label{F:b6}}
\end{figure}

{\sf Case} $2$ (see Figure \ref{F:b7}): if the rightmost $\Rmin(s)_{i-1}$ is next to $\Rmin(s)_{i}$ and {\em no} $\M asc$ appears between the first two $\Rmin(s)_{i}$ that are after the rightmost $\Rmin(s)_{i-1}$, let $m-1$ be the number of ascents from the beginning $s_1$ to the second $\Rmin(s)_{i}$ after the rightmost $\Rmin(s)_{i-1}$, then
\begin{itemize}
	\item insert $m$ right before the second $\Rmin(s)_{i}$ after the rightmost $\Rmin(s)_{i-1}$;
	\item replace any entry $y$ after the inserted $m$ by $y+1$ if $y\ge m$;
	\item if there are only two $\Rmin(s)_{i}$ after the rightmost $\Rmin(s)_{i-1}$, then stop; otherwise, replace each $\Rmin(s)_{i}$ that is between the leftmost $m$ and the rightmost $\Rmin(s)_i$ by an $m$ according to rule $\CMcal{R}_1$.
\end{itemize}
\begin{figure}[ht]
	\centering
	\includegraphics[scale=0.7]{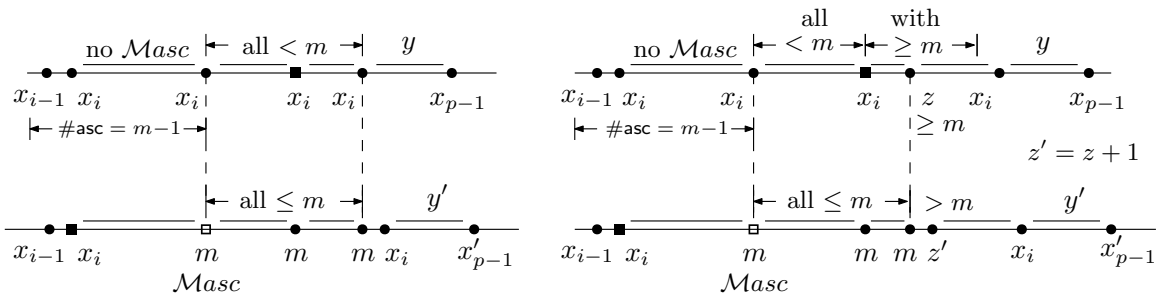}
	\caption{{\sf Case} $2$: the rightmost $x_{i-1}$ is next to $x_i$ and no $\M asc$ appears between the first two $x_i$'s that are after $x_{i-1}$. Here $x_i=\Rmin(s)_{i}$ with $i=\rpos(s)$ and $y'=y+1$ if $y\ge m$; otherwise $y'=y$.\label{F:b7}}
\end{figure}

{\sf Case} $3$ (see Figure \ref{F:b8}): if the rightmost $\Rmin(s)_{i-1}$ is {\em not} next to an $\Rmin(s)_{i}$ and the two rightmost $\Rmin(s)_{i}$ are not next to each other,  let $m-2$ be the number of ascents from the beginning $s_1$ to the first $\Rmin(s)_{i}$ after the rightmost $\Rmin(s)_{i-1}$, then 
\begin{itemize}
	\item insert $\Rmin(s)_{i}$ immediately after the rightmost $\Rmin(s)_{i-1}$;
	\item if the second $\Rmin(s)_i$ after the rightmost $\Rmin(s)_{i-1}$ is followed by exactly $k$ non-rightmost $\Rmin(s)_i$ ($k$ could be zero), then replace these $(k+1)$ identical entries $\Rmin(s)_i$ by $(k+1)$ identical $m$;
	\item replace all entries $y$ after the rightmost inserted $m$ by $y+1$ if $y\ge m$;
	\item substitute each $\Rmin(s)_{i}$ that is between the leftmost $m$ and the rightmost $\Rmin(s)_i$ by an $m$ according to rule $\CMcal{R}_2$.
\end{itemize}
\begin{figure}[ht]
	\centering
	\includegraphics[scale=0.62]{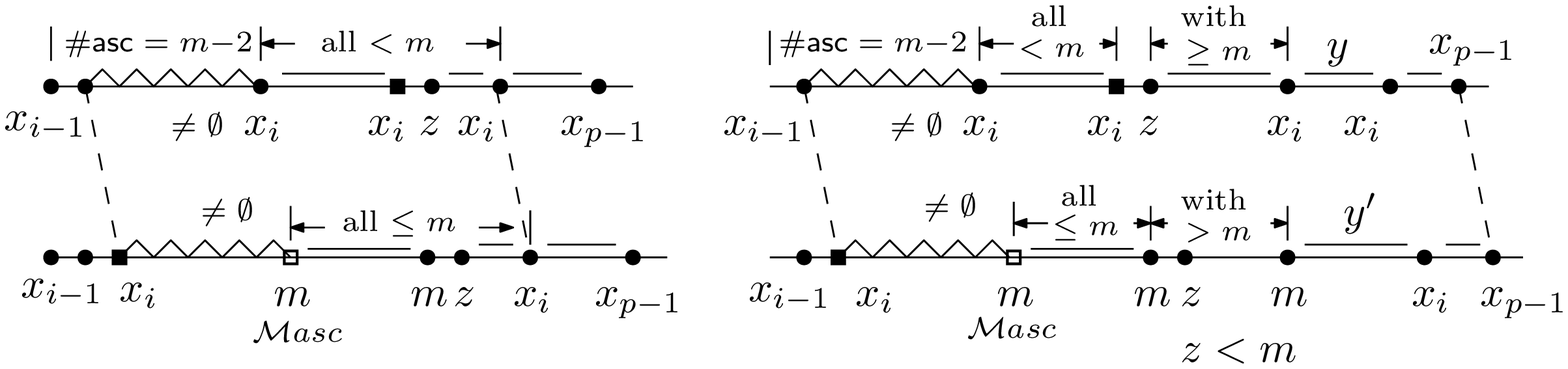}
	\caption{{\sf Case} $3$: the rightmost $x_{i-1}$ is not next to $x_i$ and the two rightmost $x_i$'s are not next to each other. Here $x_i=\Rmin(s)_{i}$ with $i=\rpos(s)$, $z<m$ and $y'=y+1$ if $y\ge m$; otherwise $y'=y$.\label{F:b8}}
\end{figure}

{\sf Case} $4$ (see Figure \ref{F:b9}): if the rightmost $\Rmin(s)_{i-1}$ is not next to an $\Rmin(s)_{i}$ and the two rightmost $\Rmin(s)_{i}$ are next to each other,  let $m-2$ be the number of ascents from the beginning to the rightmost $\Rmin(s)_{i-1}$, then, assuming that exactly $(k+1)$ rightmost $\Rmin(s)_{i}$ are next to each other ($k\ge 1$), we
\begin{itemize}
	\item remove the $k$ rightmost $\Rmin(s)_{i}$;
	\item insert two integers $\Rmin(s)_{i}m$ immediately after the rightmost $\Rmin(s)_{i-1}$;
	\item replace all entries $y$ after the inserted $m$ by $y+1$ if $y\ge m$;
	\item substitute each non-rightmost $\Rmin(s)_{i}$ that are between the leftmost $m$ and the rightmost $\Rmin(s)_{i}$ by an $m$ according to rule $\CMcal{R}_2$;
	\item insert $(k-1)$ $m$'s immediately after the leftmost $m$.
\end{itemize}
\begin{figure}[ht]
	\centering
	\includegraphics[scale=0.77]{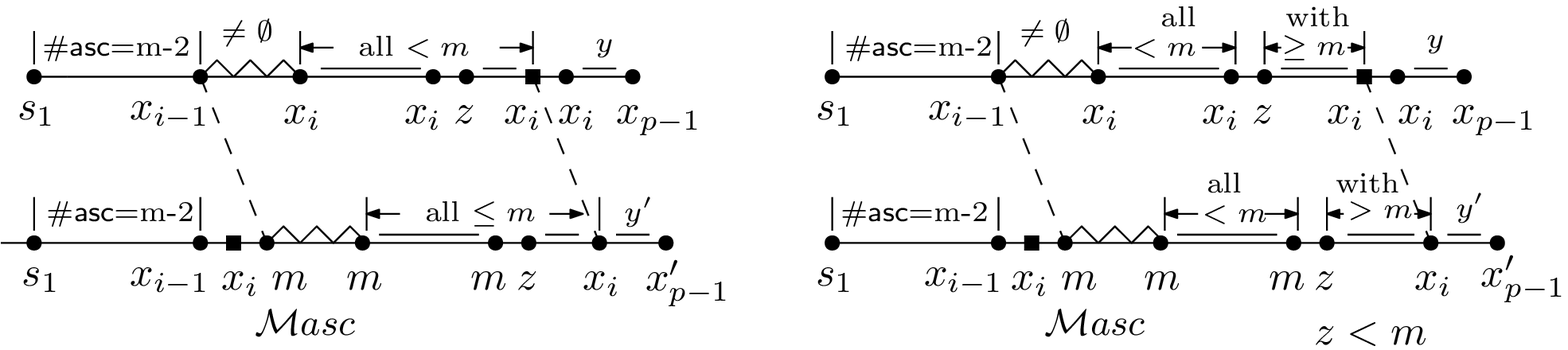}
	\caption{{\sf Case} $4$: the rightmost $x_{i-1}$ is not next to $x_i$ and the two rightmost $x_i$'s are next to each other. Here $x_i=\Rmin(s)_{i}$ with $i=\rpos(s)$, $z<m$ and $y'=y+1$ if $y\ge m$; otherwise $y'=y$.\label{F:b9}}
\end{figure}
By the construction of $g_{5,3}(s)$ (see (\ref{E:g51})), one can readily see that $g_{5,3}(s)\in\B_1$. 
It remains to show that $g_{5,3}$ is a bijection. 

For any ascent sequence $\hat{s}\in \B_1$ with $\rpos(\hat{s})=i\ne 0$ and $\min \M asc(\hat{s})=\hat{m}$, if all entries between the two rightmost $\Rmin(\hat{s})_i$ are less than or equal to $\hat{m}$, then $\hat{s}$ is produced from 
\begin{itemize}
	\item {\sf Case} $2$ if the last $\hat{m}$ is next to the rightmost $\Rmin(\hat{s})_i$ (see the left one of Figure \ref{F:b7});
	\item {\sf Case} $3$ otherwise if the first $\hat{m}$ is not next to $\Rmin(\hat{s})_i$ (see the left one of Figure \ref{F:b8});
	\item {\sf Case} $4$ otherwise (see the left one of Figure \ref{F:b9}).
\end{itemize}
If there exists an entry that is greater than $\hat{m}$ and appears between the two rightmost $\Rmin(\hat{s})_i$, then $\hat{s}$ comes from 
\begin{itemize}
	\item {\sf Case} $1$ if the first $\hat{m}$ is followed by $\hat{m}+1$ (see Figure \ref{F:b6});
	\item {\sf Case} $2$ otherwise if the first entry that is greater than $\hat{m}$ appears immediately after a non-leftmost $\hat{m}$ (see the right one of Figure \ref{F:b7});
	\item {\sf Case} $3$ otherwise if the first $\hat{m}$ is not next to $\Rmin(\hat{s})_i$ (see the right one of Figure \ref{F:b8});
	\item {\sf Case} $4$ otherwise (see the right one of Figure \ref{F:b9}).
\end{itemize}
This implies that $g_{5,3}$ is surjective. Since all steps in all cases including the substitution rules $\R_1$ and $\R_2$ are reversible, the map $g_{5,3}$ is therefore injective. In consequence, $g_{5,3}$ is a bijection, implying the composition $f_{5,3}=g_{5,3}\circ g$ is the desired bijection $f_5^*$ when restricted to the set $\T_{5,3}\cap\A_n$.

Regarding the statistics, the bijection $f_{5,3}$ sends $(\asc,\rep,\rmin,\rpos)$ to $(\asc-1,\rep,\rmin,\rpos)$.  Only when $\rpos(s)=0$, $\zero(f_{5,3}(s))=\zero(s)+1$. In analogy to Lemma \ref{L:rmincase3}, one can examine the change of statistics $\max$ and $\ealm$.
\end{proof}

\begin{example}
	For $s=(0,1,2,0,1,3,{2},5,5,{2},7,3,{\bf 1},3,8)\in\T_{5,3}\cap \A_{15}$, then by applying bijection $g$, we obtain $g(s)=(0,1,2,0,1,3,{2},5,5,{2},7,3,{\bf 2},3)$ which belongs to {\sf case 3}. According to the construction of $g_{5,3}$ for {\sf case} 3, we have $m=6$ and 
	\begin{align*}
	g(s)&=\,(0,1,2,0,1,3,{\bf 2},5,5,{\bf 2},7,3,{\bf 2},3)\\
	&\rightarrow (0,1,2,0,1,{\bf 2},3,{\bf 2},5,5,{\bf 2},8,3,{\bf 2},3)\\
	&\rightarrow (0,1,2,0,1,{\bf 2},3,6,5,5,{\bf 2},8,3,{\bf 2},3)\\
	&\rightarrow (0,1,2,0,1,{\bf 2},3,6,5,5,8,{6},3,{\bf 2},3)=f_{5,3}(s)=f_5^*(s).
	\end{align*}
\end{example}

We next turn to construct the bijection for the subset $\T_{5,4}$ where two insertion rules $\R_3,\R_4$ that modifies the set of right-to-left minima play an essential role. 

\subsection{Two insertion rules $\R_3$ and $\R_4$}\label{ss:r34}

\begin{lemma}\label{L:rmin5}	
	Proposition \ref{P:rmin4} is true when $f_5^*$ is restricted between $\T_{5,4}\cap \A_n$ and the set $\B-\B_1$ of ascent sequences $s$ from $\B$ (defined in Proposition \ref{P:rmin4}) where the non-zero integer $\min \M asc(s)$ also appears after the rightmost $\Rmin(s)_{\rpos(s)}$.
\end{lemma}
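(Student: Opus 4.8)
The plan is to mirror the proof of Lemma \ref{L:rmin4}, but with the two substitution rules $\R_1,\R_2$ replaced by two \emph{insertion} rules $\R_3,\R_4$ that enlarge (resp.\ modify) the set of right-to-left minima, since now the distinguished $\M$asc value $\min\M asc(s)$ is required to reappear \emph{after} the rightmost $\Rmin(s)_{\rpos(s)}$. First I would factor $f_5^*$ restricted to $\T_{5,4}\cap\A_n$ as a composition $f_{5,4}=g_{5,4}\circ g'$, where $g'$ peels off the last entry (and performs the $\sebr$-substitution on the rightmost $\Rmin(s)_{\rpos(s)}$) exactly as the map $g$ did in Lemma \ref{L:rmin4}, producing an ascent sequence in $\A_{n-1}$ with $\rpos$ increased by one and with controlled changes in $\zero,\max,\ealm$; the verification of these statistic shifts is identical in spirit to Lemma \ref{L:rmincase3}. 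The genuinely new content is the map $g_{5,4}$ into $\B-\B_1$, together with its inverse.

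For $g_{5,4}$ I would run through the same four case split as before (according to whether the rightmost $\Rmin(s)_{\rpos(s)-1}$ is adjacent to $\Rmin(s)_{\rpos(s)}$, and whether the two rightmost copies of $\Rmin(s)_{\rpos(s)}$ are adjacent), but in each case the non-rightmost copies of $x_i=\Rmin(s)_{\rpos(s)}$ that lie between the leftmost $\hat m$ and the rightmost $x_i$ are rewritten using $\R_3$ (in the "Case $1,2$" regime) or $\R_4$ (in the "Case $3,4$" regime) instead of $\R_1,\R_2$. The defining feature of $\R_3,\R_4$ is that they are designed so that the integer $\min\M asc$ of the output survives to the right of the rightmost right-to-left minimum — i.e.\ the output lands in $\B-\B_1$ rather than $\B_1$ — while still preserving the five Euler--Stirling statistics $\asc,\rep,\zero,\max,\rmin$; I would state $\R_3,\R_4$ precisely with their scenario-by-scenario substitutions (paralleling substitutions $(1)$--$(3)$ and $(4)$--$(7)$), check reversibility of each scenario, and illustrate each with a worked example as in the $\R_1,\R_2$ discussion.

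To prove $g_{5,4}$ is a bijection onto $\B-\B_1$, I would argue surjectivity by reconstructing, for an arbitrary $\hat s\in\B-\B_1$ with $\rpos(\hat s)=i\ne0$ and $\min\M asc(\hat s)=\hat m$, which of the four cases produced it: since $\hat m$ now occurs after the rightmost $\Rmin(\hat s)_i$, the relevant dichotomy is whether all entries strictly between the two rightmost $\Rmin(\hat s)_i$ are $\le\hat m$ or not, and then, as in Lemma \ref{L:rmin4}, finer adjacency conditions among the leftmost $\hat m$, the occurrences of $\Rmin(\hat s)_i$, and the first entry exceeding $\hat m$ pin down the case uniquely. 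Injectivity follows because every step, including $\R_3$ and $\R_4$, is reversible. Finally I would record the statistic bookkeeping: $f_{5,4}$ sends $(\asc,\rep,\rmin,\rpos)$ to $(\asc-1,\rep,\rmin,\rpos)$ (the $\asc$ drop coming from $g'$), $\zero$ changes only when $\rpos(s)=0$, and the pair $(\max,\ealm)$ behaves as in the statement of Proposition \ref{P:rmin4} according to whether the second rightmost $\Rmin(s)_{\rpos(s)}$ is a maximal of $s$. The main obstacle I anticipate is getting the case analysis in $g_{5,4}$ and its inverse exhaustive and non-overlapping — in particular verifying that the outputs of the four cases are \emph{disjoint} and jointly cover $\B-\B_1$, and that the ascent/maximal structure is not inadvertently disturbed when $\R_3,\R_4$ insert new copies of $\hat m$ near an existing $\M$asc.
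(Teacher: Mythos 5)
Your proposal has two genuine gaps, both stemming from trying to transplant the architecture of Lemma \ref{L:rmin4} onto $\T_{5,4}$ where it does not fit. First, the preliminary map $g'$ (``peel off the last entry and perform the $\sebr$-substitution exactly as $g$ did'') is unavailable here. The map $g$ of Lemma \ref{L:rmin4} relies on two features of $\T_{5,3}$: the last entry is an $\M asc$, and $0\ne\sebr(s)<\Rmin(s)_{\rpos(s)+1}$, so that substituting the rightmost $\Rmin(s)_{\rpos(s)}$ by $\sebr(s)$ genuinely advances $\rpos$ while the removal of the final $\M asc$ is a clean, reversible length reduction. For $s\in\T_{5,4}$ one has instead $\sebr(s)=\Rmin(s)_{\rpos(s)+1}$ with $\Prm(s)_{\rpos(s)+1}=\Prm(s)_{\rpos(s)}+1$, so the same substitution merely creates two adjacent equal entries $\Rmin(s)_{\rpos(s)+1}$ (a $\T_2$-type configuration, not an advance of $\rpos$), and the last entry need not be an $\M asc$, so its removal is not ascent-controlled. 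Consequently your four-case split by adjacency of $\Rmin(s)_{\rpos(s)-1}$ and of the two rightmost $\Rmin(s)_{\rpos(s)}$ --- which is the correct trichotomy for landing in $\B_1$ --- is not the relevant one for $\B-\B_1$. What actually governs $\T_{5,4}$ is the location of the \emph{first $\M asc$ after the rightmost} $\Rmin(s)_{\rpos(s)}$: whether it is itself a right-to-left minimum, occurs only strictly between two consecutive right-to-left minima, or recurs after the next right-to-left minimum as well. Relatedly, $\R_3$ and $\R_4$ are not scenario-by-scenario \emph{substitution} rules applied to the non-rightmost copies of $\Rmin(s)_{\rpos(s)}$ (those are still handled by $\R_1$); they are \emph{insertion} rules that place a single new occurrence of the $\M asc$ value $m$ into the tail so that $m$ becomes a new right-to-left minimum, shifting the staircase of right-to-left minima.

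Second, and more seriously, your construction is entirely non-recursive, whereas the map on $\T_{5,4}$ cannot be given in one pass. In the third of the three cases above, the distinguished $\M asc$ reappears beyond the next right-to-left minimum, and the construction must form an intermediate sequence $s^\bullet$, apply $f_{5,4}^{-1}$ to it (which is only legitimate under an induction on $\rmin(s)-\rpos(s)$, anchored at $\rmin(s)-\rpos(s)=2$ where only the first two cases occur), and then re-run the procedure on $(s^\bullet,\rpos(s))$. Surjectivity is then argued by partitioning $\B-\B_1$ into two classes $\C_1\dot\cup\,\C_2$ according to whether $\min\M asc(\hat s)$ is itself a right-to-left minimum with the appropriate adjacency/gap conditions, showing the first two cases hit exactly these classes, and unwinding the recursion. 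Your plan covers only the ``depth-one'' configurations and has no mechanism to reach, or invert the map on, sequences where the $\M asc$ propagates through several successive right-to-left minima. A smaller point: Proposition \ref{P:rmin4} requires $\asc$ to be preserved and $\rpos$ to change by one, so the net bookkeeping you state for the composite would not in any case establish the proposition as formulated.
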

We prove Lemma \ref{L:rmin5} right after the rules $\R_3$ and $\R_4$ are defined.

Rule $\R_3$: For any ascent sequence $s$, let $m$ be an $\M asc$ of $s$ that appears only once and it is not a right-to-left minimum, set 
$$k=\max\{l: \Rmin(s)_l\le m-1\},$$
we will insert an $m$ to $s$ so that $m$ becomes a new right-to-left minimum.  
\begin{itemize}
	\item 	
	If $k=\rmin(s)-1$, i.e., the last right-to-left minimum $\Rmin(s)_{k}$ (or equivalently the last entry)
	is smaller than $m$, then we add $m$ at the end of $s$; otherwise, we replace the rightmost $\Rmin(s)_{k+1}$ by $m$, replace the rightmost $\Rmin(s)_{r+1}$ by $\Rmin(s)_r$ for $k+1\le r\le \rmin(s)-2$ and add $\Rmin(s)_{\rmin(s)-1}$ at the end.    
	\begin{figure}[ht]
		\centering
		\includegraphics[scale=0.66]{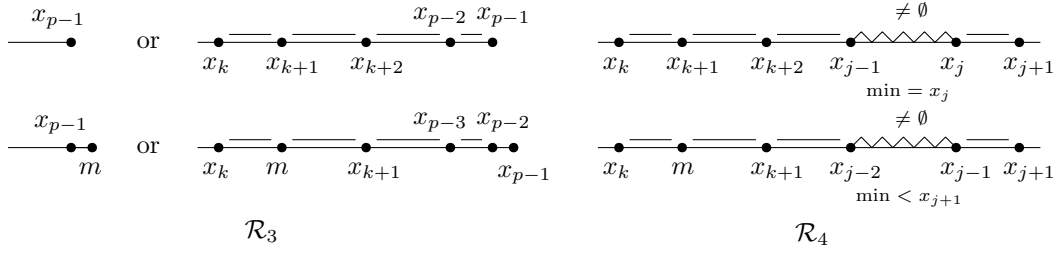}
		\caption{The insertion rules $\R_3$ and $\R_4$ where $x_i=\Rmin(s)_{i}$, $j=\rpos(s)$ and $k$ is the maximal index such that $x_k\le m-1$.\label{F:c0}}
	\end{figure}
\end{itemize}

Rule $\R_4$: in addition to the conditions of Rule $\R_3$, here we also required that $k<\rpos(s)$. We insert an $m$ and remove the rightmost $\Rmin(s)_{\rpos(s)}$ so that $m$ is a new right-to-left minimum. This is achieved by replacing the rightmost $\Rmin(s)_{k+1}$ by $m$, replacing the rightmost $\Rmin(s)_{r+1}$ by $\Rmin(s)_r$ for $k+1\le r\le \rpos(s)-1$.


We are now ready to prove Lemma \ref{L:rmin5}.

\begin{proof}
For any ascent sequence $s\in \T_{5,4}$, we distinguish three cases according to the location of the first $\M asc$ after the rightmost $\Rmin(s)_{\rpos(s)}$. For the first two cases, the map $s\mapsto f_{5,4}(s)$ is explicitly defined, based on which the map $s\mapsto f_{5,4}(s)$ for the remaining case is recursively constructed.

{\sf Case} $1$ (see Figure \ref{F:c1}): if the first $\M asc$ after the rightmost $\Rmin(s)_{\rpos(s)}$ is a right-to-left minimum, we then implement the following {\sf Step} $1$ on the pair $(s,\rpos(s))$ to construct a new sequence $f_{5,4}(s)\in \B-\B_1$.

{\sf Step} $1$ (see Figure \ref{F:c1}): 

For any pair $(s,u)$ where $s\in\T_{5,4}$ and $u\le \rpos(s)$,  assume that the rightmost $\Rmin(s)_j$ is the first $\M asc$ after the rightmost $\Rmin(s)_{\rpos(s)}$, then
\begin{itemize}
	\item remove all entries after the rightmost $\Rmin(s)_{j-1}$;
	\item remove the rightmost $\Rmin(s)_{\rpos(s)}$;
	\item if $u=\rpos(s)$, apply the bijection $g_{5,3}$ (see (\ref{E:g51}));
	\item let $m$ equal the minimal $\M asc$ between the two rightmost $\Rmin(s)_{u+1}$, and insert $m$ according to rule $\R_3$;
	\item replace each non-rightmost $\Rmin(s)_t$ entries that is located after $\Rmin(s)_{t-1}$ by an $m$ according to rule $\R_1$ if $u+2\le t\le \max\{l:\Rmin(s)_l<m\}$; 
	\item add $(\rmin(s)-j-1)$ $\M asc$'s at the end.
\end{itemize}
Define $f_{5,4}(s)$ to be the resulting sequence after applying {\sf Step} $1$ to the pair $(s,\rpos(s))$.

\begin{figure}[ht]
	\centering
	\includegraphics[scale=0.7]{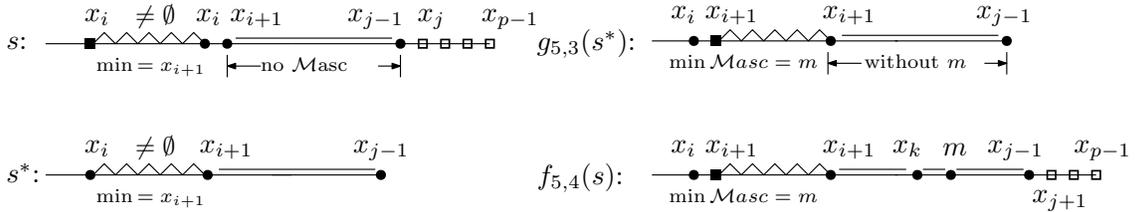}
	\caption{ The construction of $s\mapsto f_{5,4}(s)$ for {\sf case} $1$ when the first $\M asc$ after the rightmost $x_i$ is a right-to-left minimum. Here $x_i=\Rmin(s)_{i}$ and $i=\rpos(s)$. \label{F:c1}}
\end{figure}

{\sf Case} $2$ (see Figure \ref{F:c21}): if the first $\M asc$ after the rightmost $\Rmin(s)_{\rpos(s)}$ appears exclusively between two right-to-left minima, then we implement {\sf Step} $2$ on the pair $(s,\rpos(s))$ to construct a new sequence $f_{5,4}(s)\in \B-\B_1$.

{\sf Step} $2$ (see Figure \ref{F:c21}): 

For any pair $(s,u)$ where $s\in \T_{5,4}$ and $ u\le \rpos(s)$, assume that the first $\M asc$ after the rightmost $\Rmin(s)_{\rpos(s)}$ appears exclusively between the rightmost $\Rmin(s)_{j-1}$ and $\Rmin(s)_j$, then
\begin{itemize}
	\item remove the rightmost $\Rmin(s)_{\rpos(s)}$;
	\item add $\Rmin(s)_j$ right after the rightmost $\Rmin(s)_{j-1}$;
	\item if $u=\rpos(s)$, then separate the sequence right after the rightmost $\Rmin(s)_{j-1}$; apply $g_{5,3}$ (see \eqref{E:g51}) to the left part; let $m$ be the minimal $\M asc$ between the two rightmost $\Rmin(s)_{u+1}$; replace all entries $y$ from the right part by $y+1$ if $y\ge m$; afterwards put these two parts back together (see Figure \ref{F:c21});
	\item apply $g_{5,3}^{-1}$ to the entire sequence;
	\item Let $m$ be the minimal $\M asc$ between the two rightmost $\Rmin(s)_{u+1}$ and $k:=\max\{l:\Rmin(s)_l<m\}$. If $k<j$, then insert $m$ according to rule $\R_4$;
	\item replace every non-rightmost $\Rmin(s)_t$ that is located after the rightmost $\Rmin(s)_{t-1}$ by an $m$ according to rule $\R_1$ if $u+2\le t\le k$. See Figure \ref{F:c21}. 	
\end{itemize}
\begin{figure}[ht]
	\centering
	\includegraphics[scale=0.50]{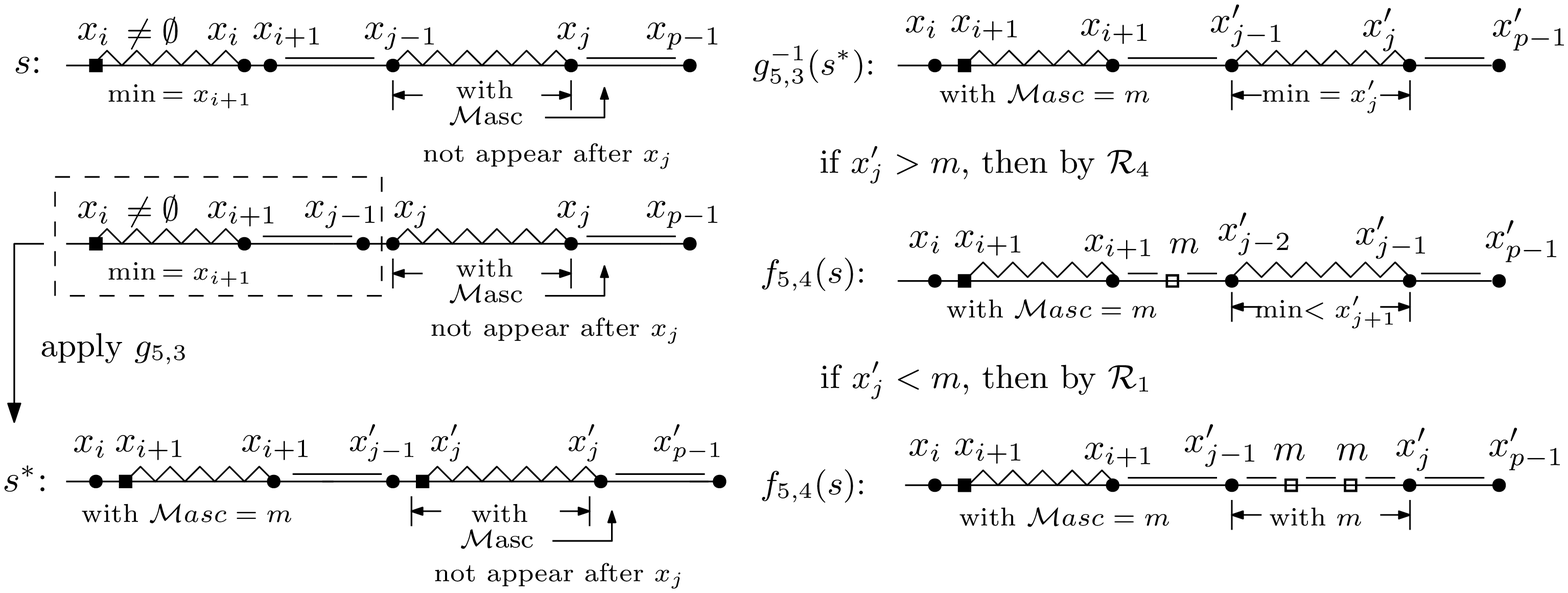}
	\caption{The construction $s\mapsto f_{5,4}(s)$ for {\sf Case} $2$. Here $x_i=\Rmin(s)_{i}$ with $i=\rpos(s)$ and $y'=y+1$ if $y\ge m$; otherwise $y=y'$. \label{F:c21}}
\end{figure}
Define $f_{5,4}(s)$ to be the resulting sequence after applying {\sf Step} $2$ to the pair $(s,\rpos(s))$. We next show the image sets of $f_{5,4}$ for {\sf Case} $1$ and {\sf Case} $2$ are disjoint.

For any $\hat{s}\in\B-\B_1$ with $m=\min \M asc(\hat{s})$, we divide $\B-\B_1$ into two disjoint subsets $\C_1$ and $\C_2$: $\C_1$ contains all ascent sequence $\hat{s}\in\B-\B_1$ satisfying the following conditions:
\begin{itemize}
	\item $m$ is a right-to-left minimum, say the $(k+1)$th right-to-left minimum;
	\item either two rightmost $\Rmin(\hat{s})_{t-1}$ and $\Rmin(\hat{s})_{t}$ are next to each other or the minimal entry in between is greater than or equal to $\Rmin(\hat{s})_{t+1}$ for all $k+1\le t\le \rmin(\hat{s})-1$.
\end{itemize}
Let $\C_2:=\B-\B_1-\C_1$. By the construction of $f_{5,4}(s)$ in {\sf Case} $1$--$2$, it is clear that the image set of $f_{5,4}(s)$ for {\sf Case} $1$ is a subset of $\C_1$, while the one for {\sf Case} $2$ is a subset of $\C_2$. Together with the fact that all steps are reversible, it follows that $f_{5,4}$ is injective for these two cases, from which we will recursively define the map $f_{5,4}$ for the remaining case.

First for any ascent sequence $s\in \T_{5,4}$ with $\rmin(s)-\rpos(s)=2$, $s$ must belong to {\sf Case} $1$ or $2$. Since the image set of $f_{5,4}(s)$ when $\rmin(s)-\rpos(s)=2$ is exactly $\C_1\dot\cup\,\C_2$ and $f_{5,4}$ is injective for these two cases, $f_{5,4}$ is a bijection when $\rmin(s)-\rpos(s)=2$.

Next assuming that there is a bijection $f_{5,4}: \T_{5,4}\cap \A_n\rightarrow \C_1\dot\cup\,\C_2$ for all ascent sequences $s$ with $\rmin(s)-\rpos(s)\le N$, we will construct the map $f_{5,4}$ for the ones with $\rmin(s)-\rpos(s)=N+1$ and prove it is a bijection. 

For any ascent sequence $s\in \T_{5,4}$ with $\rmin(s)-\rpos(s)=N+1$, if $s$ belongs to {\sf case} $1$ or $2$, then $f_{5,4}(s)$ is already given and we stop; otherwise $s$ must belong to the following case and a new sequence $f_{5,4}(s)\in \C_1\dot\cup\,\C_2$ will be defined.

{\sf Case} $3$ (see Figure \ref{F:c31}): if the first $\M asc$ after the rightmost $\Rmin(s)_{\rpos(s)}$ appears not only between two right-to-left minima, but also afterwards, then we implement the following step on the pair $(s,\rpos(s))$ to produce a new sequence $f_{5,4}(s)$.

{\sf Step} $3$ (see Figure \ref{F:c31}): 

For any pair $(s,u)$ where $s\in \T_{5,4}$ and $u\le \rpos(s)$, assume that the first $\M asc$ after the rightmost $\Rmin(s)_{\rpos(s)}$ appears between the rightmost $\Rmin(s)_{j-1}$ and $\Rmin(s)_{j}$, as well as after the rightmost $\Rmin(s)_{j}$, then
\begin{itemize}
	\item remove the rightmost $\Rmin(s)_{\rpos(s)}$;
	\item add $\Rmin(s)_j$ right after the rightmost $\Rmin(s)_{j-1}$;
	\item if $u=\rpos(s)$, then separate the sequence right after the rightmost $\Rmin(s)_{j-1}$; apply $g_{5,3}$ (see \eqref{E:g51}) to the left part; let $m$ be the minimal $\M asc$ between the two rightmost $\Rmin(s)_{u+1}$; replace all entries $y$ from the right part by $y+1$ if $y\ge m$; afterwards put these two parts back together (see Figure \ref{F:c31});
	\item apply $f_{5,4}^{-1}$ according to induction hypothesis and let $s^\bullet$ denote the resulting sequence;
	\item if $s^\bullet$ belongs to {\sf Case} $1$, do {\sf Step} $1$ on the pair $(s^\bullet,\rpos(s))$ and then stop;
	\item if $s^\bullet$ belongs to {\sf Case} $2$, do {\sf Step} $2$ on the pair $(s^\bullet,\rpos(s))$ and then stop;
	\item otherwise repeat {\sf Step} $3$ on the pair $(s^\bullet,\rpos(s))$.
\end{itemize}
Define $f_{5,4}(s)$ to be the resulting sequence after applying {\sf Step} $3$ to the pair $(s,\rpos(s))$.
\begin{figure}[ht]
	\centering
	\includegraphics[scale=0.54]{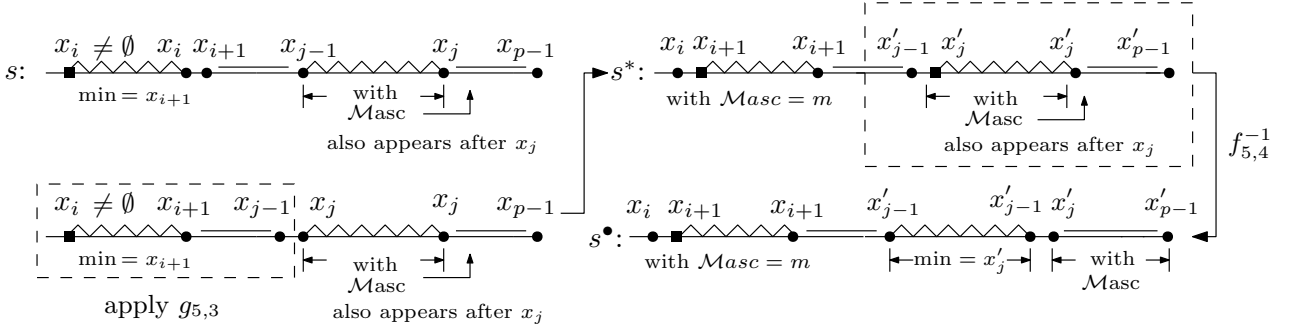}
	\caption{The construction of $s\mapsto s^{\bullet}$ for {\sf Case} $3$ and we repeat {\sf Step}s $1$--$3$ on the pair $(s^\bullet,\rpos(s))$ where $\rpos(s)=i$. \label{F:c31}}
\end{figure}

By the construction of $f_{5,4}$ for {\sf Case} $3$, it is clear that $f_{5,4}(s)\in \C_1\dot\cup\,\C_2$. According to the induction hypothesis, it remains to prove that the map $f_{5,4}$ is a bijection for all $s\in \T_{5,4}$ such that $\rmin(s)-\rpos(s)=N+1$. 

For any $\hat{s}\in \B-\B_1=\C_1\dot\cup\,\C_2$ with $\min \M asc(\hat{s})=m$ and $\rmin(\hat{s})-\rpos(\hat{s})=N$, then the sequence  $\hat{s}$ is generated from
\begin{itemize}
	\item {\sf Step} $1$ if $\hat{s}\in\C_1$;
	\item {\sf Step} $2$ if $\hat{s}\in\C_2$;
\end{itemize}
Since all {\sf Step}s $1$--$3$ including rules $\R_1,\R_3,\R_4$ are recursively reversible, we apply {\sf Step} $i$ in reverse order to $\hat{s}$ if $\hat{s}\in \C_i$ and obtain a pair $(s^{\bullet},u)$ with $s^{\bullet}\in\T_{5,4}$ and $u=\rpos(\hat{s})-1$. If $u=\rpos(s^{\bullet})$, then we stop and $s^{\bullet}=f_{5,4}^{-1}(\hat{s})$ with $\rmin(s^{\bullet})-\rpos(s^{\bullet})=N+1$; otherwise $u<\rpos(s^{\bullet})$, we implement {\sf Step} $3$ in reverse order on $s^{\bullet}$ (allowed by induction hypothesis) until a pair $(s,\rpos(s))$ is produced with $s=f_{5,4}^{-1}(\hat{s})$ satisfying $\rmin(s)-\rpos(s)=N+1$. This implies that the map $f_{5,4}$ is surjective and injective, that is $f_{5,4}$ is the desired bijection $f_5^*$ (defined in Proposition \ref{P:rmin4}) when restricted to the set $\T_{5,4}\cap\A_n$.
\end{proof}

\begin{example}\label{Eg:4}
	Given $s^\bullet=(0,1,2,0,1,2,5,{\bf 2},{3},{\bf 2},{3},8,8,4)\in\T_{5,4}$ with $\rpos(s^{\bullet})=2$, $s^{\bullet}$ belongs to {\sf Case} $2$, so we implement {\sf Step} $2$ on the pair $(s^{\bullet},2)$ as follows:
	\begin{align*}
	s^{\bullet}&\,=(0,1,2,0,1,2,5,{\bf 2},{3},{\bf 2},{3},8,8,4)\\
	&\rightarrow (0,1,2,0,1,2,5,{2},{3},{3},{\bf 4},8,8,{\bf 4}),
	\end{align*}
	then apply the bijection $g_{5,3}$ from (\ref{E:g51}) to the prefix $(0,1,2,0,1,2,5,{2},{3},{3})$ and obtain
	\begin{align*}
	g_{5,3}((0,1,2,0,1,2,5,{2},{\bf 3},{\bf 3}))=(0,1,2,0,1,2,5,{2},{\bf 3},7,{\bf 3}).
	\end{align*}
	Add the subsequence $(4,9,9,4)$ at the end, where $(4,9,9,4)$ comes from increasing each entry of $(4,8,8,4)$ by one if it is larger than or equal to $m=7$. This leads to the ascent sequence
	\begin{align*}
	(0,1,2,0,1,2,5,{2},{3},7,{3},{\bf 4},9,9,{\bf 4})\in\B_1.
	\end{align*}
	Next after applying the inverse bijection $g_{5,3}^{-1}$, it becomes
	\begin{align*}
	g_{5,3}^{-1}((0,1,2,0,1,2,5,{2},{3},7,{3},{\bf 4},9,9,{\bf 4}))=(0,1,2,0,1,2,5,{2},{3},7,{3},{4},{\bf 4},{\bf 4}).
	\end{align*}
	Finally substitute non-rightmost entries $4$ by $7$ according to $\R_1$ and
	\begin{align*}
	f_5^*(s^\bullet)=f_{5,4}(s^{\bullet})=(0,1,2,0,1,2,5,{2},{\bf 3},7,{\bf 3},{7},{7},{4}).
	\end{align*}
\end{example}

\begin{example}
	Given $s=(0,1,2,0,{\bf 1},2,{\bf 1},2,6,3,6,6,4)\in\T_{5,4}$ with  $\rpos(s)=1$ and $\rmin(s)=5$. Since $s$ belongs to {\sf Case} $3$, we implement {\sf Step} $3$ on the pair $(s,1)$ as follows:	
	\begin{align*}
	s&\,=(0,1,2,0,{\bf 1},2,{\bf 1},2,6,3,6,6,4)\\
	&\rightarrow (0,1,2,0,{1},{2},{2},{\bf 3},6,{\bf 3},6,6,4);
	\end{align*}
	then apply the bijection $g_{5,3}$ from (\ref{E:g51}) to the subsequence $(0,1,2,0,1,2,2)$, yielding
	\begin{align*}
	f_{5,1}((0,1,2,0,1,2,2))=(0,1,2,0,1,2,5,2);
	\end{align*}
	attach the subsequence $({\bf 3},7,{\bf 3},7,7,4)$ at the end, where $({\bf 3},7,{\bf 3},7,7,4)$ comes from replacing each entry $y$ of $({\bf 3},6,{\bf 3},6,6,4)$ by $y+1$ if $y\ge m=5$. Now the ascent sequence becomes
	\begin{align*}
	(0,1,2,0,1,2,5,2,{\bf 3},7,{\bf 3},7,7,4)\in \B-\B_1;
	\end{align*}
	next apply the bijection $f_{5,4}^{-1}$ (by induction hypothesis) and it is known from Example \ref{Eg:4} that 
	\begin{align*}
	f_{5,4}^{-1}(0,1,2,0,1,2,5,2,{\bf 3},7,{\bf 3},7,7,4)=(0,1,2,0,1,2,5,{\bf 2},{3},{\bf 2},{3},8,8,4)=s^{\bullet}.
	\end{align*}
	Since $s^\bullet$ belongs to {\sf Case} $2$, we implement {\sf Step} $2$ on the pair $(s^\bullet,1)$ and get
	\begin{align*}
	(0,1,2,0,1,2,5,{\bf 2},{3},{\bf 2},{3},8,8,{\bf 4})
	&\rightarrow (0,1,2,0,1,2,5,{2},{3},{3},{\bf 4},8,8,{\bf 4})\\
	&\rightarrow (0,1,2,0,1,2,5,{2},{3},{3},{\bf 4},4,{\bf 4})\\
	&\rightarrow (0,1,2,0,1,2,5,{2},{5},{3},{\bf 5},5,{\bf 4})=f_{5,4}(s)=f_5^*(s).
	\end{align*}
\end{example}

Let $\T_5=\dot\cup_{i=1}^4\T_{5,i}$, we now redivide the subset $\T_5$ according to the change of statistics and summarize the bijections on the subsets $\T_{5,i}$, $1\le i\le 4$ in Proposition \ref{L:f5}.

Let $\M_{5,1}$ be the set of ascent sequences $s\in\T_{5}$ whose the second rightmost entry $\Rmin(s)_{\rpos(s)}$ is not a maximal of $s$ or $s\in\T_{5,2}-\M_{5,2}$, where 
\begin{align*}
\M_{5,2}:=\{s\in \T_{5,2}: \Prm(s)_{\rpos(s)}=\max(s)+1\}.
\end{align*}
Let $\M_{5,3}$ denote the set of ascent sequences $s\in\T_{5,3}\dot\cup\,\T_{5,4}$ whose second rightmost entry $\Rmin(s)_{\rpos(s)}$ is a maximal of $s$. By definition, $\T_{5,1}\subseteq\M_{5,1}$ and  $\T_5=\M_{5,1}\dot\cup\,\M_{5,2}\dot\cup\,\M_{5,3}$.

\begin{proposition}\label{L:f5}
	There is a bijection 
	\begin{align*}
	f_5:\T_5\cap \A_n \rightarrow (\T_3\dot \cup \T_4 \dot \cup \T_5) \cap \A_n
	\end{align*}
	that satisfies $\zero(s)=\zero(f_{5}(s))+\chi(\rpos(s)=0)$ and transforms
	\begin{align*}
	(\asc,&\rep,\rmin,\rpos) \mbox{ to } (\asc,\rep,\rmin,\rpos-1);\\
	&(\max,\ealm) \mbox{ to } (\max,\ealm), \mbox{ if } s\in \M_{5,1},\\
	&(\max,\ealm) \mbox{ to } (\max,\ealm-1), \mbox{ if } s\in \M_{5,2},\\
	&(\max,\ealm) \mbox{ to } (\max-1,\ealm-1), \mbox{ if } s\in \M_{5,3}.
	\end{align*}
\end{proposition}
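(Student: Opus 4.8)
The plan is to build $f_5$ by gluing together the three bijections already constructed in this section: $f_{5,1}$ from Lemma~\ref{L:f53}, $f_{5,2}$ from Lemma~\ref{L:f54}, and $f_5^*$ from Proposition~\ref{P:rmin4}. Since $\T_5=\T_{5,1}\dot\cup\T_{5,2}\dot\cup(\T_{5,3}\dot\cup\T_{5,4})$ is a disjoint union, I set $f_5$ to agree with $f_{5,1}$ on $\T_{5,1}\cap\A_n$, with $f_{5,2}$ on $\T_{5,2}\cap\A_n$, and with $f_5^*$ on $(\T_{5,3}\dot\cup\T_{5,4})\cap\A_n$. This map is well defined, and the whole task is then to verify that it is a bijection onto $(\T_3\dot\cup\T_4\dot\cup\T_5)\cap\A_n$ carrying the stated statistics.

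For injectivity it is enough to observe that the three (structurally described) images are pairwise disjoint, since each of $f_{5,1},f_{5,2},f_5^*$ is already a bijection onto its image. By Lemma~\ref{L:f53} and Proposition~\ref{P:rmin4}, both the image of $f_{5,1}$ and the set $\B$ (the image of $f_5^*$) consist of ascent sequences $s$ with $\rpos(s)\ne0$ in which the rightmost $\Rmin(s)_{\rpos(s)-1}$ is immediately followed by the second rightmost $\Rmin(s)_{\rpos(s)}$; they are told apart by whether an $\M asc$ occurs strictly between the two rightmost occurrences of $\Rmin(s)_{\rpos(s)}$ — none does on the image of $f_{5,1}$, whereas $\min\M asc(s)\ne0$ on $\B$. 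The image of $f_{5,2}$, in contrast, requires the rightmost $\Rmin(s)_{\rpos(s)-1}$ not to be adjacent to $\Rmin(s)_{\rpos(s)}$, which is incompatible with the other two. For surjectivity I would first check that each of the three images lies inside $\T_3\dot\cup\T_4\dot\cup\T_5$, which equals $\A^1\dot\cup\A^2=\{s\in\A^*:\sebr(s)\ne0\}$: since $\Rmin(s)_0=0$ for every ascent sequence, no entry equal to $0$ can lie strictly between the two rightmost occurrences of $\Rmin(s)_{\rpos(s)}$ when $\rpos(s)\ne0$, so $\sebr(s)\ge1$ on each image; conversely, any $t\in(\T_3\dot\cup\T_4\dot\cup\T_5)\cap\A_n$ falls into exactly one of the three image classes according to whether $\Rmin(t)_{\rpos(t)-1}$ is adjacent to $\Rmin(t)_{\rpos(t)}$ and, in the adjacent case, whether an $\M asc$ lies between the two rightmost $\Rmin(t)_{\rpos(t)}$, so that $t$ is the image of its unique preimage under $f_{5,1}$, $f_{5,2}$ or $f_5^*$. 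This reconciliation of the normal forms produced by $f_{5,1},f_{5,2},f_5^*$ with the defining (in)equalities of $\T_3,\T_4,\T_{5,1},\dots,\T_{5,4}$, together with the verification that the three adjacency cases exhaust the codomain without overlap, is the one genuinely laborious step.

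The statistics then assemble immediately. Each of $f_{5,1},f_{5,2},f_5^*$ fixes $\asc,\rep,\rmin$, obeys $\zero(s)=\zero(f_5(s))+\chi(\rpos(s)=0)$, and realizes the shift $\rpos\mapsto\rpos-1$, so these hold throughout $\T_5$. For the pair $(\max,\ealm)$ I use the decomposition $\T_5=\M_{5,1}\dot\cup\M_{5,2}\dot\cup\M_{5,3}$ recorded just before the proposition: on $\T_{5,1}\subseteq\M_{5,1}$ the map $f_{5,1}$ fixes both $\max$ and $\ealm$; on $\T_{5,2}$ the map $f_{5,2}$ fixes $\max$ and alters $\ealm$ by precisely $\chi(\Prm(s)_{\rpos(s)}=\max(s)+1)$, hence it fixes $(\max,\ealm)$ on $\T_{5,2}-\M_{5,2}\subseteq\M_{5,1}$ and sends $(\max,\ealm)$ to $(\max,\ealm-1)$ on $\M_{5,2}$; and on $\T_{5,3}\dot\cup\T_{5,4}$ the map $f_5^*$ sends $(\max,\ealm)$ to $(\max-1,\ealm-1)$ exactly when the second rightmost $\Rmin(s)_{\rpos(s)}$ is a maximal — that is, on $\M_{5,3}$ — and fixes $(\max,\ealm)$ otherwise, those remaining sequences lying in $\M_{5,1}$. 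Matching these three behaviours against the three cases of the statement completes the proof; the only real obstacle, as noted, is the surjectivity bookkeeping in the previous paragraph.
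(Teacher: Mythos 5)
Your proposal takes essentially the same route as the paper's proof: the paper likewise defines $f_5$ piecewise as $f_{5,i}$ on each $\T_{5,i}$ and simply invokes Lemmas \ref{L:f53}, \ref{L:f54} and Proposition \ref{P:rmin4}, leaving the disjointness of the images, the covering of the codomain, and the $(\max,\ealm)$ bookkeeping via $\M_{5,1},\M_{5,2},\M_{5,3}$ to the reader — all of which you carry out explicitly and correctly. The only caveat is that your surjectivity step (like the codomain stated in the proposition itself) glosses over elements of $\T_3\dot\cup\T_4\dot\cup\T_5$ with $\rpos=0$ (e.g.\ $(0,1,0,1,2)\in\T_3$), which cannot lie in any of the three images since each constituent map sends $\rpos$ to $\rpos+1$ on the image side; this imprecision is inherited from the paper's own statements rather than introduced by your argument.
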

\begin{proof}
	For $1\le i\le 4$ and any ascent sequence $s\in \T_{5,i}$, set $f_5(s)=f_{5,i}(s)$ where $f_{5,3},f_{5,4}$ are bijections $f_5^*$ when restricted to the subsets $\T_{5,3},\T_{5,4}$ respectively. It is not hard to see that $f_5$ satisfies all desired properties after combining Lemma \ref{L:f53}, \ref{L:f54} and Proposition \ref{P:rmin4} (Lemma \ref{L:rmin4} and \ref{L:rmin5}).
\end{proof}

\section{Proof of Theorem \ref{T:5tuple}}\label{Sec:7tuple}
The purpose of this section is to complete the proof of Theorem \ref{T:5tuple}. We start with reviewing the decomposition of ascent sequences from \cite{fjlyz}, which is the last ingredient in this proof.

The decomposition of ascent sequences from \cite{fjlyz} is inspired by the statistic $\ealm$ (see Definition \ref{D:ealm}). In order to show it is parallel to the one in Section \ref{S:thm3},  we formulate it and the follow-up lemmas in a slightly different manner from \cite{fjlyz}. 

The set $\A^*$ of ascent sequences $s$ except $s=(0,1,2,\ldots,|s|-1)$ is partitioned into the following disjoint subsets:
\begin{align*}
\CMcal{D}_{1}&:=\{s\in\A^*: \vert s\vert=\max(s)+1\},\\
\CMcal{D}_{2}&:=\{s\in\A^*-\CMcal{D}_{1}: s_{\max(s)+2}\le \ealm(s)\},\\
\CMcal{D}_{3}&:=\{s\in\A^*-\CMcal{D}_{1}: s_{\max(s)+2}=\ealm(s)+1,\max(s)\notin\{s_i: \max(s)+2\leq i\leq |s|\}\},\\
\CMcal{D}_{4}&:=\{s\in\A^*-\CMcal{D}_{1}: s_{\max(s)+2}=\ealm(s)+1,\max(s)\in\{s_i: \max(s)+2\leq i\leq |s|\}\},\\
\CMcal{D}_{5}&:=\{s\in\A^*-\CMcal{D}_{1}: s_{\max(s)+2}\ge \ealm(s)+2\}.
\end{align*}
We provide the following bijections explicitly in the proofs, but omit other details as they are very straightforward.
\begin{lemma}[Lemma $8$ of \cite{fjlyz}]\label{L:maxcase2}
	There is a bijection
	$$h_2:\D_2\cap \A_n\rightarrow \{(i,s):s\in\A^*\cap \A_{n-1}, \ealm(s)\le i< \max(s)\}$$ 
	that sends $s$ to a pair $h_2(s)=(\ealm(s),s^*)$ satisfying
	\begin{equation*}
	(\asc,\rmin,\rpos,\max)s=(\asc,\rmin,\rpos,\max)s^*,
	\end{equation*}
\begin{equation*}
\zero(s)=\zero(s^*)+\chi(\ealm(s)=0)\quad\;
\text{and}\quad\;\rep(s)=\rep(s^*)+1.
\end{equation*}
\end{lemma}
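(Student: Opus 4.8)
The plan is to realise $h_2$ by a single deletion. Fix $s\in\D_2\cap\A_n$ and write $k=\max(s)$. Since $s_i=i-1$ forces $s_1<\cdots<s_{i-1}$, hence $s_j=j-1$ for every $j<i$, the maximals of an ascent sequence form an initial run; thus $s=(0,1,\dots,k-1,s_{k+1},s_{k+2},\dots,s_n)$ with $\ealm(s)=s_{k+1}\le k-1$, and since $s\notin\D_1$ position $k+2$ exists, with $s_{k+2}\le\ealm(s)$ by the definition of $\D_2$. I would let $s^*$ be $s$ with the single entry $s_{k+1}$ removed and set $h_2(s)=(\ealm(s),s^*)$; the inverse should take a pair $(i,s)$ with $\ealm(s)\le i<\max(s)$ and insert the value $i$ immediately after the last maximal of $s$. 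These two operations are visibly mutually inverse, so the content of the proof is checking that each stays inside the claimed set and that the statistics behave as stated.

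The first task is to see that $s^*$ lies in $\A^*\cap\A_{n-1}$. Around the removed entry we have $s_k=k-1\ge s_{k+1}\ge s_{k+2}$, so the deletion creates no ascent and destroys none, every prefix ascent count is unchanged, and the ascent-sequence inequalities for the surviving entries still hold; hence $s^*\in\A_{n-1}$ and $\asc(s^*)=\asc(s)$. Also $s^*=(0,\dots,k-1,s_{k+2},\dots,s_n)$ with $s_{k+2}<k$, so position $k+1$ of $s^*$ is not a maximal and $\max(s^*)=k=\max(s)\le n-2$; thus $s^*\in\A^*$ and $\ealm(s^*)=s_{k+2}\le s_{k+1}=\ealm(s)<\max(s^*)$, so $h_2(s)$ really lands in the target set. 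For the inverse, inserting $i$ after the last maximal of $s$ is legal because $i<\max(s)=\asc(s_1,\dots,s_{\max(s)})+1$, and one verifies that the result keeps $\max$ equal to $\max(s)$ (the inserted value is $<\max(s)$, so maximality still stops at the staircase), has $\ealm$ equal to $i$, and has the entry two past its last maximal equal to $\ealm(s)\le i$, hence lies in $\D_2$; it is evidently undone by the deletion.

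Then comes the bookkeeping of the remaining statistics. Since $\ealm(s)=s_{k+1}\le k-1$ already appears among the staircase entries $s_1,\dots,s_k$, deleting it removes no value from the support, whence $\rep(s^*)=\rep(s)-1$; it deletes a $0$ exactly when $\ealm(s)=0$, so $\zero(s^*)=\zero(s)-\chi(\ealm(s)=0)$. For $\rmin$: the entry $s_{k+1}$ is not a right-to-left minimum of $s$ because its right neighbour satisfies $s_{k+2}\le s_{k+1}$, and the only entries that $s_{k+1}$ could have prevented from being right-to-left minima have value $<s_{k+1}$, which it never blocked; hence $\Rmin(s^*)=\Rmin(s)$ as value sets, the deletion merely shifts the positions of the right-to-left minima beyond $k+1$ down by one, and $\rmin$ is preserved. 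For $\rpos$: the only right-to-left-minimum value whose multiplicity can drop is $\ealm(s)$ itself, and when $\ealm(s)=\Rmin(s)_m$, splitting according to whether the $(m-1)$-th right-to-left minimum occurs before or after position $k+1$ shows that the deleted copy either was not being counted (it lies to the left of that right-to-left minimum) or is compensated by a copy of $\ealm(s)$ in the staircase prefix lying to its right; either way $\Rmin(s)_m$ still occurs at least twice after the $(m-1)$-th right-to-left minimum, the counts for the other right-to-left minima are untouched, and so the maximal qualifying index $\rpos$ is unchanged.

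Because the operation is a purely local deletion, almost every verification above is immediate; the step I expect to require real care — and thus the main obstacle — is the invariance of $\rpos$, namely confirming that erasing an entry that is not a right-to-left minimum and that sits strictly above all right-to-left minima to its left cannot change which right-to-left minimum is ``doubled farthest to the right''. As indicated, this reduces to the chain $s_{k+2}\le s_{k+1}\le k-1$ together with the fact that every value $\le k-1$ already occurs in the staircase prefix $0,1,\dots,k-1$ (which supplies the needed extra copy), but pinning down the position of $\Rmin(s)_{m-1}$ relative to $k+1$ in the two cases is where the bulk of the argument sits.
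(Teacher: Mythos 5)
Your construction---deleting the entry $\ealm(s)$ at position $\max(s)+1$ and recording the value $\ealm(s)$---is exactly the map the paper uses (its proof consists of just that one line), and your detailed verification of the statistics, including the delicate $\rpos$ case via the staircase copy of $\ealm(s)$, is correct. One clause in the $\rmin$ paragraph is misstated (entries that $s_{k+1}$ could have blocked from being right-to-left minima have value $\ge s_{k+1}$, not $<s_{k+1}$), but the surrounding argument, which rests on $s_{k+2}\le s_{k+1}$ continuing to block them, is the right one.
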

\begin{proof}
	For any $s\in\D_2$, remove the entry $\ealm(s)$ at the $(\max(s)+1)$-th position of $s$. Let the resulting sequence be $s^*$ and define $h_2(s)=(\ealm(s),s^*)$.
\end{proof}
Let $\CMcal{P}_2$ be the set of ascent sequences $s\in\A^*$ such that the integer $\max(s)-1$ appears exactly once in $s$. Denote by $\CMcal{P}_2^c$ the complement of $\CMcal{P}_2$ in $\A^*$.
\begin{lemma}[Lemma 10 of \cite{fjlyz}]\label{L:p2}
	There is a bijection $$\phi_2:\A_n \cap \CMcal{P}_2\rightarrow \A_{n-1}\cap \A^*$$ 
	that transforms the septuple 
	$$(\asc,\rep,\zero,\max,\ealm,\rmin,\rpos) \mbox{ to } (\asc+1,\rep,\zero,\max+1,\ealm,\rmin,\rpos).$$
\end{lemma}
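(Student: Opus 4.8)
\textbf{Proof plan for Lemma \ref{L:p2}.}
The statement asserts the existence of a bijection $\phi_2\colon \A_n\cap\CMcal{P}_2\to\A_{n-1}\cap\A^*$ that shifts the septuple of statistics by $(+1,0,0,+1,0,0,0)$. The natural candidate, in complete analogy with the description of $\phi_1$ in Lemma \ref{L:p1} and with Lemma 10 of \cite{fjlyz}, is: given $s\in\A_n\cap\CMcal{P}_2$, let $j$ be the (unique) position of the last maximal entry of $s$, i.e.\ $s_j=j-1=\max(s)-1$ appears exactly once while $s$ is in $\CMcal{P}_2$, and simply \emph{delete that entry} $s_j$, defining $\phi_2(s)$ to be the resulting word of length $n-1$. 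First I would check that $\phi_2(s)$ is again an ascent sequence: removing an entry that is both a maximal and the unique occurrence of $\max(s)-1$ drops the running ascent count by exactly $1$ at every later position, so the defining inequality $s_i\le\asc(s_1,\dots,s_{i-1})+1$ is preserved for the surviving entries; one must also confirm $\phi_2(s)\ne(0,1,\dots,n-2)$, which holds because $s\in\A^*$ forces a non-maximal entry to survive the deletion.

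Next I would track the seven statistics. Deleting the described entry removes one ascent and one maximal, so $\asc(\phi_2(s))=\asc(s)-1$ and $\max(\phi_2(s))=\max(s)-1$, giving the $+1$ shifts in the forward direction. The entry removed is $\ge 1$ (it equals $\max(s)-1\ge 1$ since $s\in\A^*-\CMcal{D}_1$ implicitly, or one checks the edge case directly), so $\zero$ is unchanged. For $\rep$: the deleted entry was the unique copy of $\max(s)-1$ and is a maximal, so it is not a repeated entry and its removal creates no new coincidences among the survivors, hence $\rep$ is unchanged. The right-to-left minima lying strictly to the left of position $j$ are unaffected; the deleted maximal itself is never a right-to-left minimum of $s$ (it is immediately preceded by a larger or equal running context — more precisely a maximal $s_j=j-1$ with $j\ge 2$ has $s_i<s_j$ impossible to violate being a right-to-left min only if all later entries exceed it, which fails because $s\in\A^*$), so $\Rmin$, hence $\rmin$ and $\rpos$ and the relative data defining $\rpos$, are all preserved. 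This yields exactly the claimed transformation of the septuple.

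Finally I would establish bijectivity by exhibiting the inverse. Given $t\in\A_{n-1}\cap\A^*$, I would insert the new entry $\max(t)+1$ immediately after position $\max(t)+1$ of $t$ (i.e.\ right after the last maximal of $t$), producing a word of length $n$; one checks this word is an ascent sequence, lies in $\CMcal{P}_2$ (the inserted value $\max(t)+1$ is the new $\max-1$ and occurs only once), and that applying $\phi_2$ returns $t$. Since insertion and deletion are mutually inverse by construction, $\phi_2$ is a bijection. The main obstacle — and the only place real care is needed — is the bookkeeping for $\rmin$ and $\rpos$: one must verify that the deleted maximal is never itself a right-to-left minimum and that deleting it neither destroys nor creates right-to-left minima among the remaining entries, and correspondingly that the index $\rpos$ and the auxiliary data (such as $\sebr$) attached to the $\rpos$-th right-to-left minimum are genuinely untouched. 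As the lemma statement says, the remaining verifications are straightforward and can be omitted.
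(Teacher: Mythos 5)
There is a genuine gap: your map is not the paper's $\phi_2$, and as you describe it, it does not even land in $\A_{n-1}$. The paper's proof removes the unique entry with value $\max(s)-1$ \emph{and then replaces every entry $y\ge\max(s)$ by $y-1$}. You omit this renormalization step, and your justification for why plain deletion preserves the ascent-sequence property is exactly backwards: since deleting the last maximal drops the running ascent count by one at every later position, any later entry $s_i$ that was an $\CMcal{M}$asc, i.e.\ satisfied $s_i=\asc(s_1,\dots,s_{i-1})+1$ with equality, now \emph{violates} the bound $s_i\le\asc(\cdot)+1$ in the shortened word. Concretely, take $s=(0,1,0,2)\in\A_4\cap\CMcal{P}_2$ (here $\max(s)=2$ and the value $1$ occurs once); deleting that $1$ gives $(0,0,2)$, which is not an ascent sequence because $2>\asc(0,0)+1=1$. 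The paper's map instead produces $(0,0,1)$. The decrement of large entries is precisely what restores the defining inequality (an entry $s_i\ge\max(s)$ satisfies $s_i\le\asc_{\mathrm{old}}+1$, hence $s_i-1\le\asc_{\mathrm{new}}+1$), and one checks it does not disturb $\rep$, $\zero$, the relative order of entries, or the right-to-left minima (which all have values $\le\max(s)-2$ after the last maximal).

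Your proposed inverse has the corresponding defect: inserting the value $\max(t)+1$ after position $\max(t)+1$ creates an entry equal to its position minus one at a position beyond the maximal prefix, which forces the whole preceding prefix to be strictly increasing and so is not an ascent sequence in general. The correct inverse first replaces every entry $y\ge\max(t)$ by $y+1$ and then inserts the value $\max(t)$ at position $\max(t)+1$. The statistic bookkeeping you outline (the deleted entry is $\ge1$ since $\max(s)\ge2$ on $\CMcal{P}_2$, it is the unique copy of its value so $\rep$ is unchanged, and it is never a right-to-left minimum) is essentially right once the map is corrected, but as written the core construction fails.
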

\begin{proof}
	For any $s\in\CMcal{P}_2$, remove the unique entry $\max(s)-1$ and replace all entries $y$ by $y-1$ if $y\ge \max(s)$. Let $\phi_2(s)$ be the resulting sequence.
\end{proof}
\begin{lemma}[Lemma $9$ of \cite{fjlyz}]\label{L:maxcase5}
	There is a bijection 
	\begin{align*}
	h_3:\D_3 \cap \A_n \rightarrow \{s\in \A_{n}\cap \CMcal{P}_2:\ealm(s)\ne 0\}
	\end{align*}
	that transforms the quintuple
	\begin{equation*}
      (\asc,\rep,\rmin,\max,\ealm) \mbox{ to } (\asc,\rep+1,\rmin,\max-1,\ealm-1),
	\end{equation*}
and satisfies
\begin{align*}\zero(s)&=\zero(h_3(s))+\chi(\ealm(s)=0),\\ 
	\rpos(s)&=\rpos(h_3(s))-\chi(\Prm(s)_{\rpos(s)}=\max(s)+1).
	\end{align*}
\end{lemma}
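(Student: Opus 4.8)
The plan is to produce $h_3$ by a single-entry surgery, in the one-line spirit of the constructions for $\phi_2$ (Lemma~\ref{L:p2}) and $f_3$ (Lemma~\ref{L:rmincase5}). Fix $s\in\D_3$ and set $M=\max(s)$. Since $s_i=i-1$ forces $s_j=j-1$ for all $j\le i$, the maximal entries of $s$ are exactly $s_1,\dots,s_M=0,1,\dots,M-1$, so $\ealm(s)=s_{M+1}$, and the definition of $\D_3$ gives $s_{M+2}=\ealm(s)+1$ (whence $\ealm(s)\le M-2$, since $\ealm(s)+1\ne M$) and $M\notin\{s_i:M+2\le i\le n\}$. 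I would define $h_3(s)$ to be $s$ with the entry $s_{M+1}=\ealm(s)$ replaced by the value $M$, and the inverse to be the map that replaces the entry in the last maximal position of a target sequence by $\ealm(\cdot)-1$. First one checks that $h_3(s)$ is an ascent sequence: only positions $M$ and $M+1$ are altered, the new entry $M$ is legal since $\asc(s_1,\dots,s_M)+1=M$, the entry $s_{M+2}$ stays legal since $\ealm(s)+1\le M-1$, and the later entries stay legal because $\asc(0,1,\dots,M-1,\ealm(s),\ealm(s)+1)=\asc(0,1,\dots,M-1,M,\ealm(s)+1)=M$.

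Next I would identify the image and the inverse. After the surgery position $M+1$ is maximal and position $M+2$ is not (as $\ealm(s)+1\ne M+1$), so $\max(h_3(s))=M+1$; the value $M=\max(h_3(s))-1$ then occurs in $h_3(s)$ exactly once — at position $M+1$ — precisely because $\D_3$ forbids $M$ after position $M+1$; and $\ealm(h_3(s))=s_{M+2}=\ealm(s)+1\ge1$, so $h_3(s)\in\{t\in\A_n\cap\CMcal{P}_2:\ealm(t)\ne0\}$. The stated inverse is well defined because in a target sequence the hypothesis ``$\max-1$ occurs once'' singles out the entry to un-replace and the $\D_3$-conditions are recovered; hence $h_3$ is a bijection. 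The remaining statistics follow at once: $\asc$ is preserved by the $\asc$-identity above; $\max$ and $\ealm$ increase by one; $\rep$ decreases by one, because the surgery creates the value $M$ (absent from $s$ by the $\D_3$-hypothesis) without destroying the value $\ealm(s)$, which survives in the prefix; and $\zero(s)=\zero(h_3(s))+\chi(\ealm(s)=0)$ holds since the only altered entry equals $0$ exactly when $\ealm(s)=0$.

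The point I expect to need real care — and the reason for the correction term in the statement — is $\rpos(s)=\rpos(h_3(s))-\chi(\Prm(s)_{\rpos(s)}=\max(s)+1)$, together with the preservation of $\rmin$. Both require tracking how the right-to-left-minimum \emph{values} of $s$ and the index of the first ``doubled'' right-to-left minimum change when $s_{M+1}$ jumps from $\ealm(s)$ to $M$. For $\rmin$ this is a short case analysis: a value $<\ealm(s)$ or strictly between $\ealm(s)$ and $M$ has the same rtl-minimum status in $s$ and $h_3(s)$, the value $\ealm(s)$ keeps its status through its prefix occurrence, and $M$ is never a rtl minimum of $h_3(s)$ since $s_{M+2}=\ealm(s)+1<M$. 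For $\rpos$ the clean case is $\Prm(s)_{\rpos(s)}\ne M+1$, where the doubled structure is untouched; the boundary case $\Prm(s)_{\rpos(s)}=M+1$ is exactly where the $\chi$-term appears, and settling that case while confirming it does not break surjectivity is the crux. Everything else is the sentence-long bookkeeping that the surrounding lemmas perform without comment.
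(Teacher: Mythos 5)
Your construction is exactly the paper's: replace the entry $\ealm(s)$ at position $\max(s)+1$ by $\max(s)$, with the evident inverse, and your verifications of $\asc$, $\rep$, $\max$, $\ealm$, $\zero$, $\rmin$ and of the image set are correct. The paper's own proof is a one-line definition of this same map with no checks at all, so the $\rpos$-correction bookkeeping you flag as remaining is likewise left implicit there; your proposal is essentially the same argument, carried out in more detail.
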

\begin{proof}
	For any $s\in\D_3$, replace the entry $\ealm(s)$ on the $(\max(s)+1)$-th position by $\max(s)$. Define $h_3(s)$ as the resulting sequence.
\end{proof}

\begin{lemma}[Lemma $11$ of \cite{fjlyz}]\label{L:maxcase4}
	There is a bijection 
	\begin{align*}
	h_4:\D_4\cap \A_n\rightarrow &\{s\in \A_n\cap \CMcal{P}_2^c: \ealm(s)\ne 0\}
	\end{align*}
	that transforms the quintuple 
	\begin{align*}
	(\asc,\rep,\rmin,\max,\ealm) \mbox{ to } (\asc,\rep,\rmin,\max-1,\ealm-1),
	\end{align*}
	and satisfies
	\begin{align*}\zero(s)&=\zero(h_4(s))+\chi(\ealm(s)=0),\\ 
	\rpos(s)&=\rpos(h_4(s))-\chi(\Prm(s)_{\rpos(s)}=\max(s)+1).
	\end{align*}
\end{lemma}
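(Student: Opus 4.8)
The plan is to define $h_4$ by exactly the local surgery that produced $h_3$ in Lemma~\ref{L:maxcase5}, the only difference being forced by the only structural difference between $\D_3$ and $\D_4$: for $s\in\D_4$ the value $\max(s)$ already occurs among $s_{\max(s)+2},\dots,s_{|s|}$, while for $s\in\D_3$ it does not. Concretely, for $s\in\D_4\cap\A_n$ I set $h_4(s)$ to be the sequence obtained from $s$ by replacing the entry $\ealm(s)=s_{\max(s)+1}$ in position $\max(s)+1$ by the integer $\max(s)$, all other entries being left untouched; thus $|h_4(s)|=n$. The inverse should take $t\in\A_n\cap\CMcal{P}_2^c$ with $\ealm(t)\ne0$ and replace the last maximal $t_{\max(t)}=\max(t)-1$ by $\ealm(t)-1$.

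First I would verify that $h_4(s)$ is an ascent sequence and that $(\asc,\max,\ealm)$ transform as claimed. Since positions $1,\dots,\max(s)$ of $s$ carry the strictly increasing prefix $0,1,\dots,\max(s)-1$, placing $\max(s)$ in position $\max(s)+1$ meets the defining inequality there with equality, so that position becomes the new last maximal and $\max(h_4(s))=\max(s)+1$; moreover, using $s_{\max(s)+2}=\ealm(s)+1$ (valid since $s\in\D_4$), the ascent of $s$ between positions $\max(s)+1,\max(s)+2$ is traded for a new ascent between positions $\max(s),\max(s)+1$, so the partial ascent counts $\asc(s_1,\dots,s_i)$ are unchanged for every $i\ge\max(s)+2$. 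This yields both $\asc(h_4(s))=\asc(s)$ and the validity of the ascent-sequence condition at all later positions, and it gives $\ealm(h_4(s))=h_4(s)_{\max(s)+2}=s_{\max(s)+2}=\ealm(s)+1$.

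Next I would read off the remaining statistics. The entry $\ealm(s)\le\max(s)-1$ already occurs in the increasing prefix, so passing to $h_4(s)$ enlarges the set of distinct values exactly when $\max(s)$ was previously absent; since $s\in\D_4$ the value $\max(s)$ is present, hence $\rep(h_4(s))=\rep(s)$ — this is precisely where $\D_4$ parts ways with $\D_3$, and it is also why $h_4(s)\in\CMcal{P}_2^c$, the value $\max(h_4(s))-1=\max(s)$ now occurring both in position $\max(s)+1$ and again later. An entry $0$ disappears iff $\ealm(s)=0$, which gives the $\zero$ relation. For $\rmin$ and $\rpos$ the key observation is that raising $s_{\max(s)+1}$ from $\ealm(s)$ to $\max(s)$ leaves the value-set $\Rmin$ unchanged: if position $\max(s)+1$ was a right-to-left minimum of $s$, its value $\ealm(s)$ merely migrates to its occurrence in the prefix (position $\ealm(s)+1<\max(s)+1$), and otherwise no right-to-left minimum moves; so $\rmin$ is preserved, while the shift of $\rpos$ is controlled by whether the $\rpos(s)$-th right-to-left minimum occupied position $\max(s)+1$, i.e.\ by $\chi(\Prm(s)_{\rpos(s)}=\max(s)+1)$, exactly as for $h_3$.

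Finally I would check that the inverse map is well defined — that replacing $t_{\max(t)}=\max(t)-1$ by $\ealm(t)-1<\max(t)-1$ always lands back in $\D_4$ — which is forced by $t\in\CMcal{P}_2^c$ (so $\max(t)-1$ occurs somewhere besides position $\max(t)$), by $t_{\max(t)+1}=\ealm(t)$, and by $\ealm(t)\ne0$. All of this is routine verification of the sort already carried out for Lemma~\ref{L:maxcase2} and Lemma~\ref{L:maxcase5}; the hard part will be the bookkeeping for $\rmin$ and $\rpos$ under the migration of a right-to-left-minimum value described above, which I expect to be the main obstacle and which is handled by the same case analysis as in Lemma~\ref{L:maxcase5}.
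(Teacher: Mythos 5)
Your map is exactly the one the paper uses (replace the entry $\ealm(s)$ in position $\max(s)+1$ by $\max(s)$, with the same inverse), and your verification of how each statistic changes — including the migration of the right-to-left minimum value $\ealm(s)$ from position $\max(s)+1$ to its prefix occurrence, which is what preserves $\rmin$ and produces the $\chi(\Prm(s)_{\rpos(s)}=\max(s)+1)$ correction to $\rpos$ — is sound and consistent with the stated changes. The paper itself gives only the definition of $h_4$ and omits all checks, so your proposal is the same proof with the routine details actually spelled out.
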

\begin{proof}
	For any $s\in\D_4$, replace the entry $\ealm(s)$ on the $(\max(s)+1)$-th position by $\max(s)$. Define $h_4(s)$ as the resulting sequence.
\end{proof}
By taking the change of statistics into account, we further divide the subset $\D_5$ into three disjoint subsets, i.e., $D_{5}=\D_{5,1}\dot\cup \D_{5,2}\dot \cup\D_{5,3}$ where 
\begin{align*}
\D_{5,1}&=\{s\in\D_5:\min\{s_i,\max(s)+2\le i\le |s|\}\le\ealm(s)\},\\
&\qquad \dot \cup\{s\in\D_5:\min\{s_i,\max(s)+2\le i\le |s|\}=\ealm(s)+1,\rpos(s)\ge \ealm(s)+1\},\\
\D_{5,2}&=\{s\in\D_5:\min\{s_i,\max(s)+2\le i\le |s|\}=\ealm(s)+1,\rpos(s)= \ealm(s)\},\\
\D_{5,3}&=\{s\in\D_5:\min\{s_i,\max(s)+2\le i\le |s|\}\ge \ealm(s)+2\}.
\end{align*}
Note that by definition $\D_{5,2}=\CMcal{M}_{5,2}$.
\begin{lemma}\label{L:h5}
	There is a bijection 
	\begin{align*}
	h_5:\D_5\cap \A_n \rightarrow (\D_3\dot \cup \D_4 \dot \cup \D_5) \cap \A_n
	\end{align*}
	that satisfies $\zero(s)=\zero(h_{5}(s))+\chi(\ealm(s)=0)$ and transforms
	\begin{align*}
	(\asc,&\rep,\max,\ealm) \mbox{ to } (\asc,\rep,\max,\ealm-1),\\
	&(\rmin,\rpos)  \mbox{ to } (\rmin,\rpos)\mbox{ if } s\in\D_{5,1},\\
	&(\rmin,\rpos)  \mbox{ to } (\rmin,\rpos-1)\mbox{ if } s\in\D_{5,2},\\
	&(\rmin,\rpos)  \mbox{ to } (\rmin-1,\rpos-1)\mbox{ if } s\in\D_{5,3}.
	\end{align*}
\end{lemma}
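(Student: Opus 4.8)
The plan is to prove Lemma~\ref{L:h5} as the exact $\ealm$-side mirror of Proposition~\ref{L:f5}: just as $f_5$ is the last ingredient complementing the bijections $f_2,\phi_1,f_3,f_4$ of Section~\ref{S:bij} on the $\rpos$-decomposition, the map $h_5$ is the last ingredient complementing $h_2,\phi_2,h_3,h_4$ (Lemmas~\ref{L:maxcase2},~\ref{L:p2},~\ref{L:maxcase5},~\ref{L:maxcase4}) on the $\ealm$-decomposition. Under the dictionary that interchanges right-to-left minima with maximals and the pair $(\rmin,\rpos)$ with $(\max,\ealm)$, together with the corresponding replacement of the substitution/insertion machinery ($\M asc$'s and the rules $\R_1$--$\R_4$) by its $\ealm$-side analogue, the three pieces $\D_{5,1},\D_{5,2},\D_{5,3}$ of $\D_5$ correspond to the re-partition $\M_{5,1},\M_{5,2},\M_{5,3}$ of $\T_5$; the identity $\D_{5,2}=\M_{5,2}$ is already recorded just before the statement, and one gives parallel membership descriptions for the other two. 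The bijection $h_5$ is then assembled, exactly as $f_5$ was, by piecing together maps $h_{5,i}$ on these pieces.

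First I would build the ``easy'' pieces, in direct analogy with Lemmas~\ref{L:f53} and~\ref{L:f54}: bijections that insert, delete, or replace a single maximal at the relevant position, carrying $(\asc,\rep,\max)$ through unchanged, lowering $\ealm$ by one, and moving $\zero$ only by $\chi(\ealm(s)=0)$, with the effect on $(\rmin,\rpos)$ read off by a short case check whose outcome depends on whether the manipulated maximal is itself a right-to-left minimum---exactly the split separating $\D_{5,1}$, $\D_{5,2}$ and $\D_{5,3}$. For the ``hard'' piece I would set up dual substitution rules (playing the role of $\R_1,\R_2$) and dual insertion rules (playing the role of $\R_3,\R_4$), and mirror Proposition~\ref{P:rmin4}, i.e.\ Lemmas~\ref{L:rmin4} and~\ref{L:rmin5}, including the recursive construction of {\sf Step}~$3$, now run by induction on $\max(s)-\ealm(s)$ in place of $\rmin(s)-\rpos(s)$. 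Reversibility of each elementary move is inherited verbatim from the reversibility already established for $\R_1$--$\R_4$ and {\sf Step}s~$1$--$3$, and surjectivity is verified as in the proof of Lemma~\ref{L:rmin5}, by reading off from a target sequence which case produced it.

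The content going beyond the corresponding construction in \cite{fjlyz} is the tracking of the new statistic $\rpos$ together with its companions: one must verify that each move changes $(\rmin,\rpos)$ precisely as asserted and $\zero$ by $\chi(\ealm(s)=0)$. This is routine but requires following, at every insertion, deletion or substitution of a maximal, how the set $\Rmin$ and the index $\rpos$ are perturbed, according to whether the affected maximal lies in $\Rmin(s)$. The main obstacle I anticipate is the dual of the recursive {\sf Case}~$3$ of Lemma~\ref{L:rmin5}: as there, the map on the hard piece exists only by induction, so one must check that the induction hypothesis is invoked on strictly smaller data, that the reverse procedure terminates, and that the $(\rmin,\rpos)$-bookkeeping survives the recursion---this is where essentially all of the care goes. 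One could hope to bypass the re-derivation by producing a single bijection intertwining the $\ealm$- and $\rpos$-decompositions and transporting $f_5$ through it, but since no such intertwiner is available a priori, I expect the direct mirror argument to be the practical route.
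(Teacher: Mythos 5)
Your plan goes wrong at the very first step: you assume that $h_5$ must be built as a full mirror of the hard construction of $f_5$ (dual versions of $\R_1$--$\R_4$, recursive {\sf Step}s, induction on $\max(s)-\ealm(s)$). In the paper, $h_5$ is a single elementary operation: for $s\in\D_5$, increase the entry in position $\max(s)+1$ (i.e.\ the entry $\ealm(s)$) by one. Since $s\in\D_5$ means $s_{\max(s)+2}\ge\ealm(s)+2$, the result is again an ascent sequence lying in $\D_3\dot\cup\D_4\dot\cup\D_5$, the inverse is ``decrease that entry by one'', and the effect on $(\asc,\rep,\max,\ealm,\zero)$ is immediate. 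The only genuinely new content relative to \cite{fjlyz} is reading off the effect on $(\rmin,\rpos)$, and the partition $\D_{5,1},\D_{5,2},\D_{5,3}$ --- defined by comparing $\ealm(s)$ with $\min\{s_i:\max(s)+2\le i\le|s|\}$ --- is tailored exactly to this increment map: it records whether the incremented entry is, becomes, or ceases to be a right-to-left minimum. Your proposal never identifies this map.

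The deeper misconception is the ``exact mirror'' premise. The two decompositions are parallel in their combinatorial role but not in their difficulty: $\ealm$ is a local statistic (the entry at the fixed position $\max(s)+1$), whereas $\rpos$ is governed by right-to-left minima scattered through the whole sequence. That asymmetry is precisely why the paper needed to invent the substitution and insertion rules $\R_1$--$\R_4$ and the recursive {\sf Step}s only on the $\rpos$ side, while the $\ealm$-side maps $h_2,\phi_2,h_3,h_4,h_5$ are one-line modifications essentially imported from \cite{fjlyz}. A ``dual'' of $\R_1$--$\R_4$ under an interchange of maximals and right-to-left minima is not constructed anywhere, there is no reason to expect it to exist, and even granting it, you give no argument that the resulting map would land in $\D_3\dot\cup\D_4\dot\cup\D_5$ or respect the partition $\D_{5,1},\D_{5,2},\D_{5,3}$ with the stated changes to $(\rmin,\rpos)$. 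So as written the proposal has a genuine gap: it replaces an easy verification by an unsubstantiated and unnecessary construction, and the actual proof of the lemma is not reached.
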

\begin{proof}
	For any ascent sequence $s\in\D_5$, define $h_5(s)$ to be the sequence after increasing the entry on the $(\max(s)+1)$th position by one. 
\end{proof}
\subsection{Proof of Theorem \ref{T:5tuple}}\label{S:pf7}

We prove this by induction on the numbers $|s|-\max(s)$ and $|s|-\rmin(s)$ for all ascent sequences $s\in \A_n$. For the trivial case $|s|=\max(s)=\rmin(s)$, we have $\Phi(s)=s$.

For $|s|=\max(s)+1$ with $\ealm(s)=i$ and $\max(s)=p$, then $s$ has the form $(0,1,\ldots,p-1,i)$. Choose $\Phi(s)$ to be the sequence after removing the last $i$ and inserting it right after the first $i$ of $s$, i.e., $\Phi(s)=(0,1,\ldots,i-1,i,i,\ldots,p-1)$ and (\ref{E:thm7sta}) clearly holds.

Suppose that the septuple $(\asc,\rep,\zero,\max,\ealm,\rmin,\rpos)$ on ascent sequences $s\in\A_n$ with $|s|-\max(s)=N-1$ is equidistributed to $(\asc,\rep,\zero,\rmin,\rpos,\max,\ealm)$ on ascent sequences $s\in\A_n$ with $|s|-\rmin(s)=N-1$ under the bijection $\Phi$, we next show it also holds when $N-1$ is replaced by $N$.

For any ascent sequence $s\in\A_n$ with $|s|-\max(s)=N$. If $s\in \D_2$, then according to Lemma \ref{L:maxcase2}, $h_2(s)=(\ealm(s),s^*)$ with $s^*\in \A^*\cap \A_{n-1}$ and $|s^*|-\max(s^*)=N-1$. By induction hypothesis and Lemma \ref{L:rmincase2}, define 
$$\Phi(s)=f_2^{-1}(\ealm(s),\Phi(s^*))\in\T_2\cap \A_n,$$ 
which is a bijection between the sets $\D_2\cap\A_n$ and $\T_2\cap\A_n$ such that $|s|-|\max(s)|=|\Phi(s)|-|\rmin(\Phi(s))|=N$. Furthermore, it follows from Lemma \ref{L:rmincase2} and \ref{L:maxcase2} that (\ref{E:thm7sta}) is true between the subsets $\D_2\cap \A_n$ and $\T_2\cap \A_n$.

If $s\in \D_3 \cap \A_n$ with $|s|-\max(s)=N$, then by Lemma \ref{L:p2} and \ref{L:maxcase5}, let $\tilde{s}=(\phi_2\circ h_3)(s)\in \A_{n-1}\cap \A^*$ with $|\tilde{s}|-\max(\tilde{s})=N-1$. As a result, by induction hypothesis, Lemma \ref{L:p1} and \ref{L:rmincase5}, define
\begin{align}\label{E:phi3}
\Phi(s)=(f_3^{-1}\circ\phi_1^{-1}\circ\Phi \circ \phi_2\circ h_3)(s)\in \T_3\cap \A_n,
\end{align} 
which is a bijection between the sets $\D_3\cap\A_n$ and $\T_3\cap\A_n$ such that $|s|-|\max(s)|=|\Phi(s)|-|\rmin(\Phi(s))|=N$. In addition, $\Phi$ also satisfies (\ref{E:thm7sta}) because of Lemma \ref{L:p1}, \ref{L:rmincase5}, \ref{L:p2} and \ref{L:maxcase5}.

If $s\in \D_4\cap \A_n$ with $|s|-\max(s)=N$, then according to Lemma  \ref{L:maxcase4}, $h_4(s)\in\A_n\cap \CMcal{P}_2^c$. Furthermore by induction hypothesis, there is a bijection $\Phi$ between $\A_{n-1}\cap \A^*$ and itself with $|s|-\max(s)=|\Phi(s)|-\rmin(\Phi(s))=N-1$. Together with Lemma \ref{L:p1} and \ref{L:p2}, we find that $\phi_1^{-1}\circ \Phi\circ \phi_2$ is the bijection between the set $\A_n\cap\CMcal{P}_2$ and 
$\A_n\cap\CMcal{P}_1$ with $|s|-\max(s)=|\Phi(s)|-\rmin(\Phi(s))=N-1$. In view of the induction hypothesis on the set $\A_n$, it follows that the complement $\A_n\cap \CMcal{P}_2^c$ is in bijection with $\A_n\cap \CMcal{P}_1^c$ via $\Phi$ such that $|s|-\max(s)=|\Phi(s)|-\rmin(\Phi(s))=N-1$. Define 
\begin{align}\label{E:phi4}
\Phi(s)=(f_4^{-1}\circ \Phi \circ h_4)(s)\in \T_4\cap \A_n,
\end{align}
which is a bijection between the sets $\D_4\cap \A_n$ and $\T_4\cap \A_n$ such that $|s|-|\max(s)|=|\Phi(s)|-|\rmin(\Phi(s))|=N$. The bijection $\Phi$ satisfies (\ref{E:thm7sta}) according to Lemma \ref{L:rmincase3} and \ref{L:maxcase4}.

If $s\in \D_5\cap \A_n$ with $|s|-\max(s)=N$, we will define $\Phi:\D_5\cap \A_n\rightarrow \T_5\cap \A_n$ recursively.

If $\max(s)-\ealm(s)=2$, then $h_5(s)\in\D_3 \dot\cup \D_4$. In view of  (\ref{E:phi3}) and (\ref{E:phi4}) for the case when $s\in\D_3\dot\cup\D_4$ with $|s|-\max(s)=N$, we have $(\Phi\circ h_5)(s)\in \T_3 \dot\cup \T_4$. As a result, we take
\begin{align}\label{E:phi5}
\Phi(s)=(f_5^{-1}\circ \Phi\circ h_5)(s)\in  \T_5 \cap \A_n,
\end{align}
which is a bijection for the case $\max(s)-\ealm(s)=2$. By repeatedly using (\ref{E:phi5}), we can recursively define the bijection $\Phi:\D_5\cap \A_n\rightarrow \T_5\cap \A_n$ for other ascent sequences $s\in\D_5\cap \A_n$ with $\max(s)-\ealm(s)>2$. In addition, by combining Proposition \ref{L:f5} and Lemma \ref{L:h5}, we can recursively verify that for $1\le i\le 3$, $s\in\D_{5,i}$ if and only if $(\Phi\circ h_5)(s)\in f_5(\M_{5,i})$, i.e., according to (\ref{E:phi5}), $(f_5^{-1}\circ \Phi\circ h_5)(s)=\Phi(s)\in\M_{5,i}$. This implies that $\Phi$ satisfies (\ref{E:thm7sta}). 

To sum it up, for $2\le i\le 5$, the bijection $\Phi:\D_i\cap \A_n\rightarrow \T_i\cap \A_n$ satisfying (\ref{E:thm7sta}) for the case $|s|-\max(s)=N$ is constructed, under the assumption that (\ref{E:thm7sta}) is true when $|s|-\max(s)=N-1$. Together with the starting case when $|s|-\max(s)=1$, it follows by induction that (\ref{E:thm7sta}) holds, which finishes the proof. 
\qed

\section{A refined generating function}\label{S:refgf}
This section deals with refined enumerations of ascent sequences with respect to the Euler--Stirling statistics $\asc$, $\rep$, $\max$ and $\rmin$, with the purpose to establish symmetric distributions directly from the generating function. 

Since the new decomposition of ascent sequences in Section \ref{S:thm3} is parallel
to the one from \cite{fjlyz}, it makes no real difference which one we choose to derive the refined generating functions, the decomposition from \cite{fjlyz} or the new one in
Section \ref{S:thm3} of this paper. 
For convenience, we use the decomposition from \cite{fjlyz}
because some explicit computations were already done there;
we only need to point out the differences when the statistic $\rmin$ is included.

We adopt the notations from \cite{fjlyz}. Let $\A$ be the set of all ascent sequences, i.e., $\A=\A^*\cup \{s:s=(0,1,\ldots,|s|-1)\}$ and define 
\begin{align*}
F(t;x,y,w,u,z,v)&:=\sum_{\substack{s\in\A\\ \vert s \vert>\max(s)}}t^{\vert s\vert}x^{\rep(s)}y^{\max(s)}w^{\ealm(s)}u^{\asc(s)}z^{\zero(s)}v^{\rmin(s)},\\
G(t;x,y,w,u,z,v)&:=\sum_{s\in\A}t^{\vert s\vert}x^{\rep(s)}y^{\max(s)}w^{\ealm(s)}u^{\asc(s)}z^{\zero(s)}v^{\rmin(s)},\\
&\,\,=vtyz(1-vtuy)^{-1}+F(t;x,y,w,u,z,v).
\end{align*}
Let furthermore $a_p(t;x,w,u,z,v):=[y^p]F(t;x,y,w,u,z,v)$.

Here is the partition of the set $\A^*$ into disjoint subsets from \cite{fjlyz}: the first two subsets $\D_1$ and $\D_2$ are defined in Section \ref{Sec:7tuple}.
 \begin{align*}
 \CMcal{S}_{1}&:=\CMcal{D}_{1}=\{s\in\A^*: \vert s\vert=\max(s)+1\},\\
  \CMcal{S}_{2}&:=\CMcal{D}_{2}=\{s\in\A^*-\CMcal{D}_{1}: s_{\max(s)+2}\le \ealm(s)\},\\
 \CMcal{S}_{3}&:=\{s\in\A^*-\CMcal{D}_{1}: s_{\max(s)+1}< s_{\max(s)+2},\max(s)\notin\{s_i: \max(s)+2\leq i\leq |s|\}\},\\
 \CMcal{S}_{4}&:=\{s\in\A^*-\CMcal{D}_{1}: s_{\max(s)+1}< s_{\max(s)+2},\max(s)\in\{s_i: \max(s)+2\leq i\leq |s|\}\}.
 \end{align*}
 By definition $\D_3\dot \cup\D_4\dot\cup\D_5=\CMcal{S}_{3}\dot\cup \CMcal{S}_{4}$. For each above subset, we will calculate the corresponding generating function in order to formulate a functional equation of $F(t;x,y,w,u,z,v)$ as below.
 \begin{proposition}
 	The generating function $F(t;x,y,w,u,z,v)$ satisfies
 	\begin{align}\label{E:idf2}
 &\left(1-\frac{ry-1}{y(1-w)}\right)F(t;x,y,w,u,z,v)\notag\\
 	\notag&=\frac{xyzvt^2(y^2tuwv(1-z)+z(y-yr+1))}{(1-ytu)(1-ytuvw)(y-yzr+z)}-\frac{tx}{1-w}F(t;x,wy,1,u,z,v)\\
 	\notag&\quad+(tux+y^{-1}-tu)\left(\frac{wy(1-z)+z(y-yr+1)}{(1-w)(y-yzr+z)}\right)
 	F(t;x,y,1,u,z,v)\\
 		&\quad+\frac{y^2u^2vt^2z(1-v)(tux+y^{-1}-tu)}{1-ytu}
 	\left(\frac{y^2tuvw(1-z)+z(y-yr+1)}{(1-ytuvw)(y-yzr+z)}\right)F(t;x,y,1,u,1,v),
 	\end{align}
 	where $r=t(u+x-xu)$.
 \end{proposition}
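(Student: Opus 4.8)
The plan is to derive the functional equation \eqref{E:idf2} by computing, for each of the subsets $\CMcal{S}_1,\CMcal{S}_2,\CMcal{S}_3,\CMcal{S}_4$ partitioning $\A^*$, the contribution to $F(t;x,y,w,u,z,v)$ and then summing. Since $\CMcal{S}_3\dot\cup\CMcal{S}_4$ coincides with $\D_3\dot\cup\D_4\dot\cup\D_5$, we may reuse the computations from \cite{fjlyz} verbatim on $\CMcal{S}_1$, $\CMcal{S}_2$, $\CMcal{S}_3$, $\CMcal{S}_4$, tracking only the extra variable $v$ marking $\rmin$. First I would record that $\CMcal{S}_1$ contributes the ``staircase-plus-one-entry'' sequences $(0,1,\dots,p-1,i)$: here $\rep$ increases by one when $i\le p-1$ (and stays if $i=p$, but that case has $|s|=\max(s)$ and is excluded), $\zero$ gets an extra $1$ iff $i=0$, and $\rmin$ equals $i+1$ when $i\le p-1$ while $\asc=p-1$, $\max=p-1$, $\ealm=i$; summing the geometric series in $p$ and $i$ gives the rational prefactor with denominator $(1-ytu)(1-ytuvw)(y-yzr+z)$ seen in the first line of the right-hand side.

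Next I would treat $\CMcal{S}_2=\D_2$ using the bijection $h_2$ of Lemma \ref{L:maxcase2}: an element of $\D_2$ is obtained from $s^*\in\A^*\cap\A_{n-1}$ by inserting a copy of $\ealm(s^*)=i$ right after the last maximal, for any $i$ with $\ealm(s^*)\le i<\max(s^*)$. This multiplies the weight by $tx$ (length $+1$, one new repeat), by $z^{\chi(i=0)}$, and leaves $\asc,\max,\rmin$ unchanged while resetting $\ealm$ to $i$; performing the finite sum over $i$ from the old $\ealm$-value up to $\max-1$ produces the $-\dfrac{tx}{1-w}F(t;x,wy,1,u,z,v)$ term together with a $\dfrac{1}{1-w}$-type geometric factor applied to $F(t;x,y,1,u,z,v)$, exactly as in \cite{fjlyz} but now carrying $v^{\rmin}$ untouched. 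For $\CMcal{S}_3$ and $\CMcal{S}_4$ I would follow the same recursive set-up from \cite{fjlyz}: these are the sequences with $s_{\max(s)+1}<s_{\max(s)+2}$, split by whether $\max(s)$ reappears to the right of position $\max(s)+2$. Their generating functions are expressed through $F(t;x,y,1,u,z,v)$ and $F(t;x,y,1,u,1,v)$; the only place $\rmin$ interacts nontrivially is in the block that, in the $\ealm$-decomposition language, creates new right-to-left minima — precisely the passage governed by Lemma \ref{L:h5}, where on $\D_{5,3}$ one has $(\rmin,\rpos)\mapsto(\rmin-1,\rpos-1)$ — so the substitution $v\mapsto$ appropriate monomials there yields the factor $y^2u^2vt^2z(1-v)/(1-ytu)$ and the $v$-dependence in the last line, including the $(1-ytuvw)$ in the denominator coming from the geometric sum over how many maximals-turned-minima are stacked.

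After assembling the four contributions I would collect the coefficient of $F(t;x,y,w,u,z,v)$ on the left: the terms with the ``live'' variable $w$ (from $\CMcal{S}_2$, where inserting $\ealm=i$ with $i$ ranging geometrically gives a $w$-series) combine into the factor $1-\dfrac{ry-1}{y(1-w)}$, after using $r=t(u+x-xu)$ to simplify $tux+y^{-1}-tu = (1-ry)/(y)\cdot(\text{something})$; verifying that this algebraic simplification is consistent is a routine but bookkeeping-heavy check. The main obstacle I anticipate is not any single hard idea but the careful matching of the $v$-exponents across the $\CMcal{S}_3/\CMcal{S}_4$ recursion: one must confirm that inserting/deleting an $\ealm$-entry in the $\D_2,\D_3,\D_4$ steps never changes $\rmin$ (true because $\ealm(s)=s_{\max(s)+1}$ sits strictly between two larger-or-equal neighbours and so is never a right-to-left minimum of $s$, nor does its removal expose or destroy one), whereas the $\D_5$ step — raising the entry at position $\max(s)+1$ — can merge a maximal into the right-to-left-minima chain, which is exactly the source of the $z(1-v)$ and $(1-ytuvw)$ factors. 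Once that is pinned down, equation \eqref{E:idf2} follows by straightforward algebra, and I would close by noting that the special case $v=1$ recovers the functional equation of \cite{fjlyz}, providing a consistency check.
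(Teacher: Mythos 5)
Your overall strategy is the paper's: partition $\A^*$ into $\CMcal{S}_1,\ldots,\CMcal{S}_4$, compute each block's contribution by extending the computations of \cite{fjlyz} with the extra variable $v$ marking $\rmin$, and sum. Your tracking of how $v$ passes through each block is essentially right. But there is a concrete gap in the final assembly. The insertion/removal moves on $\CMcal{S}_2$, $\CMcal{S}_3$ and $\CMcal{S}_4$ change $\zero$ by $\chi(\ealm=0)$, and the resulting $z^{\chi(i=0)}$ corrections (which you do note for $\CMcal{S}_2$) do not get absorbed into the $F$-terms you list: they produce an additional unknown, the specialization $F(t;x,y,0,u,z,v)$, entering with total coefficient $(z-1)\bigl(t(x+u-xu)-y^{-1}\bigr)$. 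Summing the four contributions therefore yields not \eqref{E:idf2} but the intermediate equation \eqref{E:idf}, which still contains $F(t;x,y,0,u,z,v)$.

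To reach \eqref{E:idf2} one must eliminate this boundary term: set $w=0$ in \eqref{E:idf}, solve for $F(t;x,y,0,u,z,v)$ in terms of $F(t;x,y,1,u,z,v)$ and $F(t;x,y,1,u,1,v)$ (this is precisely where the factor $y-yzr+z$ is created), and substitute back. Your proposal never performs this step, and its absence shows elsewhere: you read the denominator $(1-ytu)(1-ytuvw)(y-yzr+z)$ off the first line of \eqref{E:idf2} and attribute it to the geometric sums over $\CMcal{S}_1$, but the raw $\CMcal{S}_1$ contribution has denominator $(1-ytu)(1-ytuwv)$ only; the factor $y-yzr+z$ has no combinatorial origin inside $\CMcal{S}_1$ and appears only after the $w=0$ elimination. (A smaller slip: in $\CMcal{S}_2$ the geometric factor $\tfrac{tx}{1-w}$ multiplies $F(t;x,y,w,u,z,v)$, not $F(t;x,y,1,u,z,v)$; it is this $w$-marked term, together with the analogous ones from $\CMcal{S}_3$ and $\CMcal{S}_4$ rather than from $\CMcal{S}_2$ alone, that assembles into the kernel factor $1-\tfrac{ry-1}{y(1-w)}$ on the left.) With the elimination step added, your argument coincides with the paper's proof.
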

 \begin{proof}
 We omit the proofs of the generating function formulas for each subset $\CMcal{S}_i$ as they are direct extensions of the ones from \cite{fjlyz}.
 For the first two subsets $\CMcal{S}_1$ and $\CMcal{S}_2$, the generating functions
 are respectively:
 \begin{align}\label{eq:case1}
 &\sum_{s\in\D_1}t^{\vert s\vert}x^{\rep(s)}y^{\max(s)}w^{\ealm(s)}u^{\asc(s)}z^{\zero(s)}v^{\rmin(s)}
 =\frac{xyzvt^2(z+ytuwv-ytuzwv)}{(1-ytu)(1-ytuwv)},\\\intertext{and}
 \label{eq:case2}
 &\sum_{s\in\D_{2}}t^{\vert s\vert}x^{\rep(s)}y^{\max(s)}w^{\ealm(s)}u^{\asc(s)}z^{\zero(s)}v^{\rmin(s)}\notag\\
 &=\frac{tx}{1-w}(F(t;x,y,w,u,z,v)-F(t;x,yw,1,u,z,v))+tx(z-1)F(t;x,y,0,u,z,v).
 \end{align}
 For the second two subsets $\CS_3$ and $\CS_4$, the generating functions
 are respectively:
 \begin{align}
 &\sum_{s\in\CS_3}t^{\vert s\vert}x^{\rep(s)}y^{\max(s)}w^{\ealm(s)}u^{\asc(s)}z^{\zero(s)}v^{\rmin(s)}\label{eq:case3}\notag\\
 \notag&=
 (tux)\left(\frac{w+z-wz}{1-w}\right)F(t;x,y,1,u,z,v)-\frac{tux}{1-w}F(t;x,y,w,u,z,v)\\
 \notag&\quad -tux(z-1)F(t;x,y,0,u,z,v)\\
 &\quad +\frac{y^2u^3t^3vxz(1-v)(z(1-tuywv)+tuywv)}{(1-tuywv)(1-tuy)}F(t;x,y,1,u,1,v),\\
\intertext{and} \notag
&\sum_{s\in\CMcal{S}_4}t^{\vert s\vert}x^{\mathrm{rep}(s)}y^{\max(s)}w^{\ealm(s)}u^{\mathrm{asc}(s)}z^{\mathrm{zero}(s)}v^{\rmin(s)}\label{eq:case4}\\
 \notag&=\frac{(w+z-wz)(1-ytu)}{(1-w)y}F(t;x,y,1,u,z,v)\\
 \notag&\quad +\left(\frac{ytuvw(1-v)}{1-ytuvw}+z-zv\right)yvu^2t^2z F(t;x,y,1,u,1,v)\\
 &\quad-\frac{(1-tuy)}{(1-w)y}F(t;x,y,w,u,z,v)
 -\frac{(z-1)(1-tuy)}{y}F(t;x,y,0,u,z,v).
 \end{align}
The sum of all generating functions \eqref{eq:case1}--\eqref{eq:case4} equals $F(t;x,y,w,u,z,v)$, which leads to
\begin{align}\label{E:idf}
&\left(1-\frac{yt(x+u-ux)-1}{y(1-w)}\right)F(t;x,y,w,u,z,v)\notag\\
\notag&=\frac{xyzvt^2(z+ytuwv-ytuzwv)}{(1-ytu)(1-ytuwv)}-\frac{tx}{1-w}F(t;x,yw,1,u,z,v)\\
\notag&\quad+(z-1)(t(x+u-xu)-y^{-1})F(t;x,y,0,u,z,v)\\
\notag&\quad+\frac{w+z-wz}{1-w}(uxt+y^{-1}-ut)F(t;x,y,1,u,z,v),\\
&\quad+\frac{yu^2vt^2z(1-v)(z-ztuywv+tuywv)}{1-ytuvw}\left(1+\frac{yutx}{1-yut}\right)
F(t;x,y,1,u,1,v),
\end{align}
We next set $w=0$ and $r=t(x+u-xu)$ on both sides, yielding
\begin{align*}
F(t;x,y,0,u,z,v)&=\frac{y^2xz^2vt^2}{(1-ytu)(y-yzr+z)}
+\frac{z(ytux-ytu+1)}{y-yzr+z}F(t;x,y,1,u,z,v)\\
&\quad+\frac{y^2u^2vt^2z^2(1-v)(ytux-ytu+1)}{(1-ytu)(y-yzr+z)}F(t;x,y,1,u,1,v).
\end{align*}
Substituting the above expression for $F(t;x,y,0,u,z,v)$ in \eqref{E:idf}, we arrive at \eqref{E:idf2}.
 \end{proof}
By solving \eqref{E:idf2} for the case $z=1$, we deduce the generating function for the quadruple $(\asc,\rep,\max,\rmin)$ of statistics on ascent sequences, which is part of Theorem \ref{TC:int}.
 \begin{theorem}\label{T:int}
 	The generating function $\Gf(t;x,y,u,v)$ defined in \eqref{E:g2}
 	 is given by \eqref{E:g5}.
 \end{theorem}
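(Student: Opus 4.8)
The plan is to solve the functional equation \eqref{E:idf2} in the case $z=1$ by the kernel method in the catalytic variable $w$, and then to reorganize the resulting infinite sum into the product form \eqref{E:g5}. First I would reduce to a single unknown: since $\Gf(t;x,y,u,v)=G(t;x,y,1,u,1,v)=\dfrac{vty}{1-vtuy}+F(t;x,y,1,u,1,v)$, it suffices to determine $A(y):=F(t;x,y,1,u,1,v)$ as a formal power series in $t$. Setting $z=1$ in \eqref{E:idf2} makes every factor $(1-z)$ vanish and lets the common factors $(y-yr+1)$ cancel, so that the functional equation collapses to
\[
K(w)\,F(t;x,y,w,u,1,v)=\frac{xyvt^{2}}{(1-ytu)(1-ytuvw)}-\frac{tx}{1-w}\,A(wy)+C(y,w)\,A(y),
\]
where $K(w)=1-\dfrac{ry-1}{y(1-w)}$, where $C(y,w)$ is an explicit rational function coming from the third and fourth terms of \eqref{E:idf2}, where $r=t(x+u-xu)$, and where $F(t;x,y,w,u,1,v)\big|_{w=1}=A(y)$.

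Next I would run the kernel method. The equation $K(w)=0$ has the single root $w=w^{\ast}:=1-r+1/y$, which enjoys the key identities $w^{\ast}y=y(1-r)+1$, $\;1-(w^{\ast}y)r=(1-r)(1-ry)$ and $1-w^{\ast}=-(1-ry)/y$. Substituting $w=w^{\ast}$ kills the left-hand side and yields the two-term linear recursion
\[
A(y)=-\frac{1}{C(y,w^{\ast})}\left[\frac{xyvt^{2}}{(1-ytu)(1-ytuvw^{\ast})}+\frac{txy}{1-ry}\,A\bigl(y(1-r)+1\bigr)\right].
\]
Since $C(y,w^{\ast})=-1+O(t)$ is a unit while $\dfrac{txy}{1-ry}=O(t)$, this recursion is a contraction in the $t$-adic topology. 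Iterating it and writing $y^{(m)}$ for the point fixed by $1-y^{(m)}r=(1-yr)(1-r)^{m}$ — so that $y^{(0)}=y$, each $y^{(m)}$ is a polynomial in $t$, and $y^{(m)}w^{\ast(m)}=y^{(m+1)}$ with $w^{\ast(m)}:=1-r+1/y^{(m)}$ — one obtains a $t$-adically convergent expansion of $A(y)$ as an explicit infinite sum whose $m$-th summand is $\displaystyle\Bigl(\prod_{k=0}^{m-1}\frac{-1}{C(y^{(k)},w^{\ast(k)})}\cdot\frac{txy^{(k)}}{1-y^{(k)}r}\Bigr)\cdot\frac{-1}{C(y^{(m)},w^{\ast(m)})}\cdot\frac{xy^{(m)}vt^{2}}{(1-y^{(m)}tu)(1-tuvy^{(m+1)})}$.

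Finally I would simplify this sum. Using $r=t(x+u-xu)$ (so $t/r=(x+u-xu)^{-1}$) together with $1-y^{(m)}r=(1-yr)(1-r)^{m}$, one checks the closed forms
\[
1-y^{(m)}tu=\frac{x-xu+u(1-yr)(1-r)^{m}}{x+u-xu},\qquad 1+tuy^{(m)}(x-1)=\frac{x-u(x-1)(1-yr)(1-r)^{m}}{x+u-xu},
\]
together with the analogous expression for $1-tuvy^{(m+1)}$ and a corresponding rational form for $C(y^{(m)},w^{\ast(m)})$. Plugging these in and telescoping the factors $x-xu+u(1-yr)(1-r)^{m}$ that appear alternately in numerators and denominators then collapses the sum into exactly the product appearing in \eqref{E:g5}; adding back $\dfrac{vty}{1-vtuy}$ gives $\Gf$, which proves Theorem~\ref{T:int}. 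The step I expect to be the main obstacle is this last one: converting the raw iterated-kernel sum into the compact product form of \eqref{E:g5}. This requires pinning down the precise rational forms of $1-y^{(m)}tu$, $1-tuvy^{(m+1)}$ and $C(y^{(m)},w^{\ast(m)})$ in terms of $(1-yr)(1-r)^{m}$, verifying that the denominator $C(y^{(m)},w^{\ast(m)})$ indeed reorganizes into the factors $x-u(x-1)(1-yr)(1-r)^{m}$ and $x-xu+u(1-rv)(1-yr)(1-r)^{m}$ of \eqref{E:g5}, and then carrying out the telescoping without dropping signs or normalization constants; that bookkeeping, rather than any conceptual difficulty, is where the work lies.
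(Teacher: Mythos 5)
Your proposal is correct and follows essentially the same route as the paper: the paper likewise applies the kernel method to \eqref{E:idf2} with the root $w=1+y^{-1}-r$ (merely performing the kernel substitution before, rather than after, setting $z=1$), obtains the same two-term recursion sending $y\mapsto y-yr+1$, and iterates it along the points $\delta_m=r^{-1}-r^{-1}(1-yr)(1-r)^m$, which are exactly your $y^{(m)}$. The closed forms you record, such as $1-y^{(m)}tu=\frac{x-xu+u(1-yr)(1-r)^m}{x+u-xu}$, are the ones the paper implicitly uses to reach \eqref{E:f}, so the remaining work is indeed only the bookkeeping you identify.
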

\begin{proof}
 We apply the kernel method to \eqref{E:idf2}. Choose
 \begin{align*}
 1-\frac{yr-1}{y(1-w)}=0, \mbox{ that is},\, w=1+y^{-1}-r
 \end{align*}
 so that the left-hand-side of \eqref{E:idf2} becomes zero.
 Consequently the functional equation \eqref{E:idf2} is simplified to
 \begin{align}\label{E:F}
 \nonumber F(t;x,y,1,u,z,v)&=\frac{xzvt^2(1-yr)(ytuv(1-z)+z)}
 {(1-ytu)(1-tuv(y-yr+1))(tux+y^{-1}-tu)}\\
 \nonumber &\quad+\frac{tx(y-yzr+z)}{(y-yr+1)(tux+y^{-1}-tu)}F(t;x,y-yr+1,1,u,z,v)\\
 &\quad-\frac{yu^2vt^2z(1-v)(yr-1)(ytuv(1-z)+z)}{(1-ytu)(1-tuv(y-yr+1))}F(t;x,y,1,u,1,v).
 \end{align}
 We set $z=1$ on both sides, leading to
 \begin{align*}
 F(t;x,y,1,u,1,v)&=\frac{xvt^2(1-yr)}
 {(1-ytu)(1-tuv(y-yr+1))(tux+y^{-1}-tu)}\\
 &\quad+\frac{tx}{(tux+y^{-1}-tu)}F(t;x,y-yr+1,1,u,1,v)\\
 &\quad-\frac{yu^2vt^2(1-v)(yr-1)}{(1-ytu)(1-tuv(y-yr+1))}F(t;x,y,1,u,1,v),
 \end{align*}
 which can be simplified as
 \begin{align*}
 &F(t;x,y,1,u,1,v)\\&=\frac{xvt^2(1-yr)}
 {(1-ytu+tuv(yr-1))(1-ytuv)(tux+y^{-1}-tu)}\\
 &\quad+\frac{tx(1-ytu)(1-tuv(y-yr+1))}
 {(tux+y^{-1}-tu)(1-ytu+tuv(yr-1))(1-ytuv)}F(t;x,y-yr+1,1,u,1,v).
 \end{align*}
 Define $\delta_m:=r^{-1}-r^{-1}(1-yr)(1-r)^m$ so that $\delta_1=yw=y+1-yr$. By iterating the above equation, we conclude that 
 \begin{align}\label{E:f}
 \nonumber& F(t;x,y,1,u,1,v)\\
 \nonumber&=\sum_{m=0}^{\infty}\frac{rv(1-yr)(1-r)^m}
 {(x-xu+u(1-yr)(1-r)^m)(1-ytuv)}\\
 &\quad\times\prod_{i=0}^{m}\frac{x(1-(1-yr)(1-r)^i)(x-xu+u(1-yr)(1-r)^i)}
 {(x-u(x-1)(1-yr)(1-r)^i)(x-xu+u(1-rv)(1-yr)(1-r)^i)},
 \end{align}
 which is equivalent to \eqref{E:g5}.
 \end{proof}

\section{Transformations of basic hypergeometric series}\label{S:hyp}

For convenience, we recall some standard notions from the theory of basic
hypergeometric series, cf.\ \cite{gr}.

For indeterminates $a$ and $q$ (the latter is referred to as the base),
and non-negative integer $k$,
the basic shifted factorial (or $q$-shifted factorial) is defined as
\begin{equation*}
(a;q)_k:=\prod_{j=1}^k(1-aq^{j-1}).
\end{equation*}
This also makes sense for $k=\infty$, where the infinite product is viewed as
a formal power series in $q$ (whereas, viewed as an analytic expression in $q$,
we would need to insist on $|q|<1$, for convergence).
When dealing with products of $q$-shifted factorials, it is convenient to use
the following short notation,
\begin{equation*}
(a_1,\ldots,a_m;q)_k:=(a_1;q)_k\cdots(a_m;q)_k,
\end{equation*}
where again $k$ is a non-negative integer or $\infty$.

An ${}_\alpha\phi_\beta$ basic hypergeometric series with $\alpha$ upper
parameters $a_1,\ldots,a_\alpha$, and $\beta$ lower parameter $b_1,\ldots,b_\beta$,
base $q$ and argument $z$
is defined as
\begin{equation}\label{eq:bhyp}
{}_\alpha\phi_\beta\!\left[\begin{matrix}a_1,\ldots,a_\alpha\\
b_1,\ldots,b_\beta\end{matrix};q,z\right]:=
\sum_{k=0}^\infty\frac{(a_1,\ldots,a_\alpha;q)_k}
{(q,b_1,\ldots,b_\beta;q)_k}\left((-1)^kq^{\binom k2}\right)^{1+\beta-\alpha}z^k.
\end{equation}
The series in \eqref{eq:bhyp} (where the lower parameters are assumed to be chosen such
that no poles occur in the summands of the series)
terminates if one of the upper parameters, say $a_1$,
is of the form $q^{-n}$. Since $(q^{-n};q)_k=0$ for $k>n$,
the series in that case contains only finitely many non-vanishing terms.
If the series does not terminate, one usually imposes $|q|<1$.
See \cite[Sec.~1.2]{gr} for conditions under which the series converges.

One of the most important identities in the theory of basic hypergeometric series
is the Sears transformation~\cite[(III.15)]{gr},
\begin{equation}\label{tfsears}
{}_4\phi_3\!\left[\begin{matrix}q^{-n},a,b,c\\
d,e,abcq^{1-n}/de\end{matrix};q,q\right]=
\frac{(e/a,de/bc;q)_n}{(e,de/abc;q)_n}
\,{}_4\phi_3\!\left[\begin{matrix}q^{-n},a,d/b,d/c\\
d,aq^{1-n}/e,de/bc\end{matrix};q,q\right].
\end{equation}
In \eqref{tfsears}, $a,b,c,d,e$ and $q$ are indeterminates
and $n$ is a non-negative integer (which is responsible
that both ${}_4\phi_3$ series are actually finite sums and each
contains only $n+1$ non-vanishing terms).

While for non-terminating basic hypergeometric series in base $q$
we usually consider expansions around $q=0$, in this paper
(and more generally, when dealing with generating functions
of members of the Fishburn family)
we are dealing with power series in $r$, which
can be written as basic hypergeometric series in base $q=1-r$,
thus can be viewed as functions analytic around $q=1$.
We need to be cautious when we resort to non-terminating
identities for basic hypergeometric series.
The first part of the argument in the
proof of Theorem \ref{thm:4phi3tf}, as our main result in this section,
is similar to that used by Andrews and
Jel\'inek in \cite{aj} for establishing $q$-series identities around $q=1$.

\begin{proof}[Proof of Theorem \ref{thm:4phi3tf}]
Observe that both sides of
the identity converge as power series in $a$, thus
are analytic functions in $a$.
(This would not be true if $r$ would be replaced by $1-r$
with the series being expanded around $r=0$.)
Indeed, for each $m\ge 0$ the expansion of
$(1-a;1-r)_m$ in monomials $a^ir^l$ only involves terms with $i+l\ge m$
and each factor in the denominator of the series
has a non-vanishing constant term. Thus, in the expansion of the
series in the variables $a$ and $r$ the contribution of
coefficients for each monomial $a^ir^l$ is finite.

Now both sides of \eqref{tf43} agree for $a=(1-(1-r)^{-n})$ where
$n=0,1,2,\dots$ by the $(q,a,b,c,d,e)\mapsto
(1-r,(1-r)^j,b,c,d,e)$ special case
of the transformation in \eqref{tfsears}.
Since we have shown \eqref{tf43}
for infinitely many values of
$a$ accumulating at $a=-\infty$, i.e. $1-a=\infty$
(the transformation \eqref{tfsears}
itself is valid in the limiting case $n\to\infty$ (i.e. $q^{-n}\to\infty$)!),
by the identity theorem in complex analysis
the transformation \eqref{tf43}
is true for all $a$ in its domain of analyticity.
\end{proof}

\begin{remark}
It is interesting to notice that while the classical Sears transformation in
\eqref{tfsears} concerns a transformation between two terminating
${}_4\phi_3$ series in base $q$, valid as an identity around $q=0$,
the identity in Theorem~\ref{thm:4phi3tf}
concerns a transformation between two non-terminating ${}_4\phi_3$ series
in base $q=1-r$, valid as an identity around $r=0$ or, equivalently, $q=1$.
\end{remark}

We give two noteworthy specializations as immediate corollaries.
The first one is obtained by letting $a\to 1$ in \eqref{tf43}.
\begin{corollary}\label{cor:3phi2tf}
Let $b,c,d,e,r$ be complex variables, $j$ be a non-negative integer.
Then, assuming that none of the denominators in \eqref{tf32}
have vanishing constant term in $r$, we have the following transformation
of convergent power series in $r$:
\begin{align}\label{tf32}
&{}_3\phi_2\!\left[\begin{matrix}(1-r)^j,b,c\\
d,e\end{matrix};1-r,1-r\right]\notag\\
&=\frac{((1-r)/e;1-r)_j}{((1-r)bc/de;1-r)_j}\;
{}_3\phi_2\!\left[\begin{matrix}(1-r)^j,d/b,d/c\\
d,de/bc\end{matrix};1-r,1-r\right].
\end{align}
\end{corollary}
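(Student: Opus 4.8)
The plan is to obtain \eqref{tf32} as the specialization of \eqref{tf43} in Theorem~\ref{thm:4phi3tf} where $a\to 1$. First I would recall that, as shown in the proof of Theorem~\ref{thm:4phi3tf}, both sides of \eqref{tf43} are genuine power series in $a$ and $r$ (each monomial $a^ir^l$ receiving only finitely many contributions), hence analytic in $a$ in a neighbourhood of $a=1$; thus taking the limit $a\to 1$ amounts to evaluating \eqref{tf43} at $a=1$, which is legitimate as long as no pole is created. The quantities in \eqref{tf43} that degenerate at $a=1$ are exactly the upper parameter $1-a$ (present in both ${}_4\phi_3$ series), the lower parameters $(1-r)^{j+1}(1-a)bc/de$ and $(1-r)^{j+1}(1-a)/e$, and the two factors $((1-r)(1-a)bc/de;1-r)_j$ and $((1-r)(1-a)/e;1-r)_j$ from the prefactor; each of these becomes $0$, and since $(0;1-r)_k=1$ for every $k$ (in particular it never vanishes), no pole is introduced. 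The denominator factors that remain are precisely $(1-r;1-r)_k$, $(d;1-r)_k$, $(e;1-r)_k$, $(de/bc;1-r)_k$ and $((1-r)bc/de;1-r)_j$, whose constant terms in $r$ are non-zero by hypothesis.

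Next I would carry out the bookkeeping of the collapse ${}_4\phi_3\to{}_3\phi_2$. Using $(0;1-r)_k=1$, the factors contributed by the upper parameter $1-a$ and by the lower parameter $(1-r)^{j+1}(1-a)bc/de$ both become identically $1$ in the left-hand ${}_4\phi_3$ of \eqref{tf43}; since the exponent $1+\beta-\alpha$ equals $0$ for a ${}_3\phi_2$ just as for a ${}_4\phi_3$, and the argument is $1-r$ in either case, the left-hand side of \eqref{tf43} reduces precisely to the ${}_3\phi_2$ series on the left of \eqref{tf32}. The same argument applied to the right-hand ${}_4\phi_3$, together with the reduction of the prefactor $((1-r)/e,(1-r)(1-a)bc/de;1-r)_j/((1-r)(1-a)/e,(1-r)bc/de;1-r)_j$ to $((1-r)/e;1-r)_j/((1-r)bc/de;1-r)_j$, yields the ${}_3\phi_2$ series on the right of \eqref{tf32}. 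Equating the two sides gives the claimed identity.

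The step that requires the most care is the first one: confirming that setting $a=1$ meets no hidden singularity. In practice this reduces to the single observation that the only factors vanishing at $a=1$ are the harmless $(0;1-r)_k$-type factors, and that the surviving denominators coincide with those in \eqref{tf32}; both are clear on inspecting \eqref{tf43}, but they ought to be stated explicitly for the specialization to be rigorous. Everything else is a routine substitution, consistent with the fact that \eqref{tf32} is stated as an immediate corollary.
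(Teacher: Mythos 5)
Your proposal is correct and follows exactly the route the paper intends: the paper offers no proof beyond the remark that the corollary ``is obtained by letting $a\to 1$ in \eqref{tf43}'', and you simply make that one-line specialization rigorous by noting that every factor degenerating at $a=1$ is of the harmless form $(0;1-r)_k=1$, so the ${}_4\phi_3$'s collapse to the stated ${}_3\phi_2$'s without introducing poles. No discrepancy with the paper's argument.
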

The second one is obtained by replacing $c$ by $d/c$ in \eqref{tf43}
and letting $d\to 0$.
\begin{corollary}\label{cor2:3phi2tf}
Let $a,b,c,e,r$ be complex variables, $j$ be a non-negative integer.
Then, assuming that none of the denominators in \eqref{2tf32}
have vanishing constant term in $r$, we have the following transformation
of convergent power series in $a$ and $r$:
\begin{align}\label{2tf32}
&{}_3\phi_2\!\left[\begin{matrix}(1-r)^j,1-a,b\\
e,(1-r)^{j+1}(1-a)b/ce\end{matrix};1-r,1-r\right]\notag\\
&=\frac{((1-r)/e,(1-r)(1-a)b/ce;1-r)_j}{((1-r)(1-a)/e,(1-r)b/ce;1-r)_j}
\;{}_3\phi_2\!\left[\begin{matrix}(1-r)^j,1-a,c\\
ce/b,(1-r)^{j+1}(1-a)/e\end{matrix};1-r,1-r\right].
\end{align}
\end{corollary}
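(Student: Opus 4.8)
The plan is to obtain \eqref{2tf32} directly from the transformation \eqref{tf43} of Theorem~\ref{thm:4phi3tf}, by the parameter substitution $c\mapsto d/c$ followed by the limit $d\to0$, exactly as the statement of the corollary indicates. For fixed $j$, identity \eqref{tf43} is an equality of convergent power series in $a$ and $r$ whose coefficients are rational functions of the remaining parameters $b,c,d,e$, so replacing $c$ by $d/c$ produces another such identity (under the usual proviso that the resulting denominators do not vanish identically). Carrying the substitution out turns the left-hand ${}_4\phi_3$ into the one with upper parameters $(1-r)^j,1-a,b,d/c$ and lower parameters $d,e,(1-r)^{j+1}(1-a)b/ce$; it turns the prefactor into the ratio of $\bigl((1-r)/e,(1-r)(1-a)b/ce;1-r\bigr)_j$ to $\bigl((1-r)(1-a)/e,(1-r)b/ce;1-r\bigr)_j$; and it turns the right-hand ${}_4\phi_3$ into the one with upper parameters $(1-r)^j,1-a,d/b,c$ and lower parameters $d,ce/b,(1-r)^{j+1}(1-a)/e$.

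Next I would let $d\to0$. In the $k$-th summand of the left-hand ${}_4\phi_3$ the entire $d$-dependence resides in the quotient $(d/c;1-r)_k/(d;1-r)_k$, and likewise in the $k$-th summand of the right-hand ${}_4\phi_3$ it resides in $(d/b;1-r)_k/(d;1-r)_k$; since $(0;1-r)_k=1$, both quotients are analytic at $d=0$ and equal $1$ there, while the prefactor is independent of $d$. Hence, coefficientwise in $a$ and $r$, both sides are rational in $d$ and regular at $d=0$, so the limit is computed by simply setting $d=0$. The left-hand side then collapses to the ${}_3\phi_2$ with upper parameters $(1-r)^j,1-a,b$ and lower parameters $e,(1-r)^{j+1}(1-a)b/ce$, and the right-hand side to the stated prefactor times the ${}_3\phi_2$ with upper parameters $(1-r)^j,1-a,c$ and lower parameters $ce/b,(1-r)^{j+1}(1-a)/e$, which is precisely \eqref{2tf32}.

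The one point needing care — the closest thing to an obstacle — is the bookkeeping that makes the limit $d\to0$ legitimate termwise and that guarantees no denominator of \eqref{2tf32} is forced to vanish. This is handled exactly as in the proof of Theorem~\ref{thm:4phi3tf}: the expansion of $(1-a;1-r)_m$ in monomials $a^ir^l$ contributes only when $i+l\ge m$, so for fixed $j$ the coefficient of each $a^ir^l$ on either side is a finite sum, every term of which is rational in $d$ with denominator a product of factors such as $(d;1-r)_k$, $(e;1-r)_k$ and $\bigl((1-r)^{j+1}(1-a)b/ce;1-r\bigr)_k$, all nonzero at $d=0$ under the standing hypothesis on constant terms in $r$. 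Thus the two sides, being equal as analytic functions of $a$ and $r$ for $d$ in a punctured neighbourhood of $0$, remain equal in the limit, giving \eqref{2tf32}. No analytic input beyond the identity theorem already invoked for Theorem~\ref{thm:4phi3tf} is required.
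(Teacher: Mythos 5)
Your proposal is correct and follows exactly the paper's route: the paper derives Corollary~\ref{cor2:3phi2tf} precisely by replacing $c$ with $d/c$ in \eqref{tf43} and letting $d\to0$. Your additional bookkeeping (the $d$-dependence collapsing via $(d/c;1-r)_k/(d;1-r)_k\to1$ and $(d/b;1-r)_k/(d;1-r)_k\to1$, and the coefficientwise regularity at $d=0$) correctly fills in the details the paper leaves implicit.
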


The here obtained non-terminating basic hypergeometric transformations
of base $q=1-r$ (expanded around $r=0$)
are indeed powerful for proving equidistribution results for the Euler--Stirling statistics.
\begin{proof}[Proof of Theorem~\ref{TC:int}]
	Note that Theorem \ref{T:int} as part of Theorem \ref{TC:int} is already proved in Section \ref{S:refgf}. It remains to establish (\ref{E:sym1}) and (\ref{E:sym2}).
	
To show the symmetry $\Gf(t;x,y,u,v)=\Gf(t;x,v,u,y)$ is equivalent to showing
the identity
\begin{align}\label{tff0}
&\sum_{k=0}^\infty\frac{\left((1-yr)(1-r),\frac{u(1-yr)}{x(u-1)};1-r\right)_k(1-r)^k}
{\left(\frac{u(x-1)(1-yr)(1-r)}x,\frac{u(1-vr)(1-yr)(1-r)}{x(u-1)};1-r\right)_k}\notag\\
&=\frac{\left(1-\frac x{u(x-1)(1-yr)}\right)}{\left(1-\frac x{u(x-1)(1-vr)}\right)}
\sum_{k=0}^\infty\frac{\left((1-vr)(1-r),\frac{u(1-vr)}{x(u-1)};1-r\right)_k(1-r)^k}
{\left(\frac{u(x-1)(1-vr)(1-r)}x,\frac{u(1-vr)(1-yr)(1-r)}{x(u-1)};1-r\right)_k}.
\end{align}
Identity \eqref{tff0} is readily verified by virtue of the $j=1$ and
\begin{alignat*}3
b&=(1-yr)(1-r),&\qquad c&=\frac{u(1-yr)}{x(u-1)},\\
d&=\frac{u(1-vr)(1-yr)(1-r)}{x(u-1)},& \qquad e&=\frac{u(x-1)(1-yr)(1-r)}x
\end{alignat*}
special case of Corollary~\ref{cor:3phi2tf}.

On the other hand, to show the bi-symmetry $\G(t;x,y,u,z)=\G(t;u,z,x,y)$,
in view of Theorem~\ref{T:gen}, is equivalent to showing the identity
\begin{align}\label{tff}
&\sum_{k=0}^\infty\frac{((1-zr)(1-yr);1-r)_k\left(1-\frac{u(1-yr)}{x(u-1)}\right)}
{\left(\frac{u(x-1)(1-yr)(1-r)}x;1-r\right)_k
\left(1-\frac{u(1-yr)}{x(u-1)}(1-r)^k\right)}(1-r)^k\notag\\
&=\frac{\left(1-\frac{u(1-yr)}{x(u-1)}\right)\left(1-\frac x{u(x-1)(1-yr)})\right)}
{\left(1-\frac{x(1-zr)}{u(x-1)}\right)\left(1-\frac u{x(u-1)(1-zr)})\right)}\notag\\
&\quad\;\times
\sum_{k=0}^\infty\frac{((1-zr)(1-yr);1-r)_k\left(1-\frac{x(1-zr)}{u(x-1)}\right)}
{\left(\frac{z(u-1)(1-zr)(1-r)}u;1-r\right)_k
\left(1-\frac{x(1-zr)}{u(x-1)}(1-r)^k\right)}(1-r)^k.
\end{align}
Now, identity \eqref{tff} is readily verified by virtue of the $j=1$ and
\begin{alignat*}3
a&=r(z+y-zyr),&\qquad b&=\frac{u(1-yr)}{x(u-1)},\\
c&=\frac{x(1-zr)}{u(x-1)},&\qquad e&=\frac{u(1-yr)(1-r)}{x(u-1)}
\end{alignat*}
special case of Corollary~\ref{cor2:3phi2tf}.
\end{proof}

%

\section{Final remarks}\label{S:fre}

It is worthwhile to mention that an explicit formula for the refined generating function
of the five Euler--Stirling statistics $\asc,\rep,\zero,\max,\rmin$ on ascent sequences can be derived from \eqref{E:F} and \eqref{E:f}. We have the following result:
\begin{theorem}\label{T:5gen} Let $r=t(x+u-xu)$. The refined generating function for the quintuple $(\asc,\rep,\zero,\max,\rmin)$ of Euler--Stirling statistics on ascent sequences is 
	\begin{align}\label{eq:5gen}
	&\quad \quad G(t;x,y,u,z,v):=\sum_{n=1}^{\infty}t^n\sum_{s\in\A_n}
	x^{\rep(s)}y^{\max(s)}u^{\asc(s)}z^{\zero(s)} v^{\rmin(s)}=\frac{vytz}{1-vytu}\notag\\
	&+\sum_{k=0}^{\infty}
	\frac{yr^2vxz(tuv+z(r-tuv)-tuv(1-z)(1-yr)(1-r)^k)(1-yr)(1-r)^k}
	{(x-ux+u(1-yr)(1-r)^k)(r-tuv+tuv(1-yr)(1-r)^{k+1})(x-u(x-1)(1-yr)(1-r)^k)}\notag\\
	&\quad\quad\times\prod_{i=0}^{k-1}\frac{x-x(1-rz)(1-yr)(1-r)^i}
	{x-u(x-1)(1-yr)(1-r)^i}\notag\\
	&+\sum_{k=0}^{\infty}
	\frac{yr^2u^2vtz(1-v)(tuv+z(r-tuv)-tuv(1-z)(1-yr)(1-r)^k)(1-yr)(1-r)^k}
	{(x-xu+u(1-yr)(1-r)^k)
		(r-tuv+tuv(1-yr)(1-r)^{k+1})(r-tuv+tuv(1-yr)(1-r)^{k})}\notag\\
	&\quad \quad \quad\quad\times\sum_{m=k}^{\infty}\frac{rv(1-yr)(1-r)^{m}}
	{(x-xu+u(1-yr)(1-r)^{m})}\notag\\
	&\quad \quad \quad\quad\times\prod_{i=k}^{m}\frac{x(1-(1-yr)(1-r)^{i})
		(x-xu+u(1-yr)(1-r)^{i})}
	{(x-u(x-1)(1-yr)(1-r)^{i})(x-xu+u(1-rv)(1-yr)(1-r)^{i})}\notag\\
	&\quad \quad \quad\quad\times\prod_{j=0}^{k-1}\frac{x-x(1-rz)(1-yr)(1-r)^j}
	{x-u(x-1)(1-yr)(1-r)^j}.
	\end{align}
\end{theorem}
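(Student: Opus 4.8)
The plan is to obtain \eqref{eq:5gen} by specialising the functional equation \eqref{E:idf2}, in exactly the same manner as was done for \eqref{E:g5} in the proof of Theorem~\ref{T:int}, but without setting $z=1$. First I would recall that the kernel method applied to \eqref{E:idf2}, i.e.\ the choice $w=1+y^{-1}-r$, produces the iterable relation \eqref{E:F} for $F(t;x,y,1,u,z,v)$, in which the only ``foreign'' term is $F(t;x,y,1,u,1,v)$; the latter is given in closed form by \eqref{E:f} (equivalently \eqref{E:g5}, up to the explicit $vyt/(1-vytu)$ discrepancy between $F$ and $G$). So the strategy is: treat \eqref{E:F} as a first-order inhomogeneous recurrence in the ``shift'' $y\mapsto y-yr+1$, whose inhomogeneous part now has two pieces, one genuinely free in $z$ and one proportional to the already-known $F(t;x,y,1,u,1,v)$.

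The key steps, in order, are as follows. Step 1: rewrite \eqref{E:F} in the normalised form $F(t;x,y,1,u,z,v)=A(y)\,F(t;x,y-yr+1,1,u,z,v)+B(y)+C(y)\,F(t;x,y,1,u,1,v)$, reading off $A$, $B$, $C$ from \eqref{E:F}; here $A(y)=\dfrac{tx(y-yzr+z)}{(y-yr+1)(tux+y^{-1}-tu)}$, exactly as in the $z=1$ case but with the extra factor $(y-yzr+z)/(y-yr+1)$ coming from the $z$-dependence that was killed when $z=1$. Step 2: iterate, using $\delta_m=r^{-1}-r^{-1}(1-yr)(1-r)^m$ (so that the $m$-fold shift sends $y$ to $\delta_m$ and $1-yr$ to $(1-yr)(1-r)^m$), to get
\begin{align*}
F(t;x,y,1,u,z,v)=\sum_{k=0}^{\infty}\Big(\prod_{j=0}^{k-1}A(\delta_j)\Big)\Big(B(\delta_k)+C(\delta_k)\,F(t;x,\delta_k,1,u,1,v)\Big),
\end{align*}
which converges termwise as a power series in $r$ because $A(\delta_j)=O(r)$. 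Step 3: simplify $\prod_{j=0}^{k-1}A(\delta_j)$; the telescoping of the $(y-yr+1)$ versus $(tux+y^{-1}-tu)$ factors is identical to Theorem~\ref{T:int}, and the surviving new factor is $\prod_{j=0}^{k-1}\dfrac{x-x(1-rz)(1-yr)(1-r)^j}{x-u(x-1)(1-yr)(1-r)^j}$, which is precisely the product that appears (twice) in \eqref{eq:5gen}. Step 4: substitute the closed form \eqref{E:f} for each $F(t;x,\delta_k,1,u,1,v)$ into the $C(\delta_k)$ term; re-indexing the inner sum of \eqref{E:f} by starting it at $m=k$ yields the triple-nested sum-product of the last three lines of \eqref{eq:5gen}. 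Step 5: collect the $B(\delta_k)$ contributions into the single sum on the second and third lines of \eqref{eq:5gen}, and finally add back $vytz/(1-vytu)$ to pass from $F$ to $G$; track all the rational prefactors (the $z$-dependent kernel evaluations $(\delta_k-\delta_k z r+z)$, etc.) to confirm they match the stated numerators and denominators.

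The main obstacle I expect is purely bookkeeping: verifying that the several rational functions of $y,z,u,v,t$ produced by evaluating $A,B,C$ at $y=\delta_k$ and by splicing in \eqref{E:f} assemble \emph{exactly} into the numerators $yr^2vxz(tuv+z(r-tuv)-\cdots)$ and the denominators $(x-ux+u(1-yr)(1-r)^k)(r-tuv+tuv(1-yr)(1-r)^{k+1})(x-u(x-1)(1-yr)(1-r)^k)$ displayed in \eqref{eq:5gen}. In particular one has to rewrite factors such as $(1-ytu)$, $(1-tuv(y-yr+1))$ and $(y-yzr+z)$, evaluated at $y=\delta_k$, in terms of $r$ and $(1-yr)(1-r)^k$ — using $\delta_k r=1-(1-yr)(1-r)^k$ — and check that the apparent $\delta_k$-poles cancel against the numerator, so that each summand is genuinely a power series in $r$. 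None of this is conceptually hard given \eqref{E:F} and \eqref{E:f}, but it is the step where sign and index errors are easy to make, so it should be carried out carefully; everything else is a direct reprise of the proof of Theorem~\ref{T:int}.
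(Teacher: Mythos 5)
Your proposal is correct and follows essentially the same route as the paper: apply the kernel method to \eqref{E:idf2} to obtain \eqref{E:F}, iterate it as a first-order recurrence in the shift $y\mapsto y-yr+1$ (equivalently $\delta_k\mapsto\delta_{k+1}$), and substitute the closed form \eqref{E:f} for $F(t;x,\delta_k,1,u,1,v)$ with the inner sum re-indexed to start at $m=k$. The only cosmetic difference is that the paper packages the two inhomogeneous pieces $B(\delta_k)+C(\delta_k)F(t;x,\delta_k,1,u,1,v)$ as $z(\delta_k tuv(1-z)+z)\,H(t;x,\delta_k,1,u,1,v)$ for an auxiliary function $H$ before iterating, which is algebraically identical to your direct iteration.
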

\begin{proof}
	Note that an equivalent form of (\ref{E:F}) is 
	\begin{align*}
	F(t;x,y,1,u,z,v)&=\frac{tx(y-yzr+z)F(t;x,y-yr+1,1,u,z,v)}{(tux+y^{-1}-tu)(y-yr+1)}\\
	&\quad-\frac{txz(ytuv(1-z)+z)}{tux+y^{-1}-tu}F(t;x,y-yr+1,1,u,1,v)\\
	&\quad+z(ytuv(1-z)+z)F(t;x,y,1,u,1,v).
	\end{align*}
	Since the last two items contain a common factor $z(ytuv(1-z)+z)$, let
	\begin{align*}
	H(t;x,y,1,u,z,v):&=F(t;x,y,1,u,z,v)
	-\frac{tx(y-yzr+z)F(t;x,y-yr+1,1,u,z,v)}{(tux+y^{-1}-tu)(y-yr+1)}.
	\end{align*}
	Then, the previous equation becomes
	\begin{align*}
	H(t;x,y,1,u,z,v)&=z(ytuv(1-z)+z)H(t;x,y,1,u,1,v).\\
	F(t;x,y,1,u,z,v)&=\frac{tx(y-yzr+z)F(t;x,y-yr+1,1,u,z,v)}{(tux+y^{-1}-tu)(y-yr+1)}\\
	&\quad+z(ytuv(1-z)+z)H(t;x,y,1,u,1,v).
	\end{align*}
	Consequently \eqref{E:F} can be rewritten as
	\begin{align}\label{E:h}
	H(t;x,y,1,u,1,v)
	\nonumber&=\frac{xvt^2(1-yr)}
	{(1-ytu)(1-tuv(y-yr+1))(tux+y^{-1}-tu)}\\
	&\quad+\frac{yu^2vt^2(1-v)(1-yr)}{(1-ytu)(1-tuv(y-yr+1))}F(t;x,y,1,u,1,v).
	\end{align}
	By iterating the above equation, we find that, with $\delta_m=r^{-1}-r^{-1}(1-yr)(1-r)^m$,
	\begin{align*}
	&F(t;x,y,1,u,z,v)\\&=\sum_{k=0}^{\infty}
	z(\delta_k tuv(1-z)+z)H(t;x,\delta_k,1,u,1,v)
	\prod_{i=0}^{k-1}\frac{tx(\delta_i-\delta_izr+z)}
	{(tux+\delta_i^{-1}-tu)(\delta_i-\delta_i r+1)}.
	\end{align*}
	Substituting $H(t;x,\delta_k,1,u,1,v)$ by the right-hand-side of \eqref{E:h} and then plugging \eqref{E:f} into the equation  (after setting $y=\delta_k$), we obtain the formula
	for the generating function in \eqref{eq:5gen}.
\end{proof}
\begin{remark}\label{rem:eq}
Neither of the generating function formulas in \eqref{E:genG} or in \eqref{E:g5} is a  direct specialization of the formula \eqref{eq:5gen}, although equivalent forms of the two 	former formulas can be obtained by setting $v=1$, respectively $z=1$, in the latter one.
\end{remark}

The formula \eqref{eq:5gen} for the generating function
$G(t;x,y,u,z,v)$ is of theoretical interest; it is explicit but unfortunately rather complicated.
It seems very difficult to apply this formula in order to prove equidistribution results
by pure algebraic means (i.e., manipulations of series), although we know that $G(t;x,y,u,z,v)=G(t;x,v,u,z,y)$ holds, as a consequence of Theorem \ref{T:5tuple}.

\begin{openproblem}
Find a simpler form of the generating function $G(t;x,y,u,z,v)$ so that  $\CMcal{G}(t;x,y,u,z)$ and $\Gf(t;x,y,u,v)$ (given in Theorems \ref{T:gen} and \ref{TC:int}) are straightforward specializations of $G(t;x,y,u,z,v)$ at $v=1$ and $z=1$, respectively. Furthermore, prove the symmetry $G(t;x,y,u,z,v)=G(t;x,v,u,z,y)$ by transformations of basic hypergeometric series.
\end{openproblem}

We finally pose a conjecture on a symmetric equidistribution of
Euler--Stirling statistics on inversion sequences, which is analogous to
Theorem \ref{T:5tuple} but with the two statistics $\ealm,\rpos$
being removed, and $\A_n$ (the set of ascent sequences) being replaced
by $\CMcal{I}_n$ (the set of inversion sequences).

\begin{conjecture}\label{OP:2}
	There is a bijection $\Omega: \CMcal{I}_n\rightarrow \CMcal{I}_n$
	such that for all $s\in \CMcal{I}_n$, 
	\begin{equation*}
	(\asc,\rep,\zero,\max,\rmin)s=(\asc,\rep,\zero,\rmin,\max)\Omega(s).
	\end{equation*}
   Consequently for all $\pi\in \mathfrak{S}_n$,
   \begin{equation*}
   (\des,\iasc,\lmax,\lmin,\rmax)\pi=(\des,\iasc,\lmax,\rmax,\lmin)(b^{-1}\circ \Omega\circ b)(\pi),
   \end{equation*}
   where $b:\mathfrak{S}_n\rightarrow \CMcal{I}_n$ is a bijection due to Baril and Vajnovszki (see Theorem 1 of \cite{bv}).
\end{conjecture}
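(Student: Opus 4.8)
A natural plan for Conjecture~\ref{OP:2} is to imitate, on the larger set $\CMcal{I}_n$, the two-parallel-decomposition scheme behind Theorem~\ref{T:5tuple}. The first observation is that the auxiliary statistics $\ealm$ and $\rpos$ of Definitions~\ref{D:ealm} and~\ref{D:rpos}, as well as $\sebr$, $\Prm$ and $\Rmin$, are all perfectly meaningful for an arbitrary $s\in\CMcal{I}_n$, not just for ascent sequences. One would therefore re-run the partition of $\A^*$ into the pieces $\T_1,\T_2,\T_3,\T_4,\T_{5,i}$, and the dual partition into $\D_1,\dots,\D_5$, now on $\CMcal{I}_n^*:=\CMcal{I}_n\setminus\{(0,1,\dots,n-1)\}$, and check that every local move underlying $f_2,\dots,f_5$ and $h_2,\dots,h_5$ (deleting, inserting or rewriting a single entry) still lands in $\CMcal{I}_n$. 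This last point is in fact \emph{easier} than in the ascent-sequence case: the only requirement in position $i$ is $0\le s_i<i$, with no constraint tying $s_i$ to the running number of ascents.

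The real difficulty --- and the reason the conjecture is still open --- lies in the substitution and insertion rules $\R_1$--$\R_4$. These are organized around the notion of a \emph{maximal ascent} ($\M asc$), i.e.\ an entry $s_i=\asc(s_1,\dots,s_{i-1})+1$, which is a feature special to ascent sequences; on a general inversion sequence such an entry need not even occur between the two rightmost copies of $\Rmin(s)_{\rpos(s)}$. Thus the first and hardest step will be to find the correct replacement for the ``$\M asc$ pivot'': a canonical entry lying between those two rightmost copies into which the remaining copies can be absorbed while keeping $\asc,\rep,\zero,\max,\rmin$ unchanged. Once such a pivot and the attendant case analysis are settled, the four cases of $\R_1$--$\R_4$ (and of {\sf Cases}~$1$--$4$ of Lemma~\ref{L:rmin4} and {\sf Steps}~$1$--$3$ of Lemma~\ref{L:rmin5}) should be re-derived; I expect the bookkeeping to be somewhat lighter since no bound $s_i\le\asc(\dots)+1$ restricts the inserted values.

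With the rules in hand, the remaining steps are structural and follow the paper line by line: prove the $\CMcal{I}_n$-analogues of Lemmas~\ref{L:rmincase2}--\ref{L:h5} and of Propositions~\ref{P:rmin4} and~\ref{L:f5}, tracking in each case the prescribed change of $(\asc,\rep,\zero,\max,\ealm,\rmin,\rpos)$, then run the double induction of Section~\ref{S:pf7} on $|s|-\max(s)$ and $|s|-\rmin(s)$ to splice the pieces into a single bijection, and finally forget the two auxiliary variables to obtain $\Omega$. One should verify along the way that the termination monovariant --- each $f_i$, $h_i$ strictly decreases $|s|-\rmin(s)$ or $|s|-\max(s)$, or else decreases $\rmin(s)-\rpos(s)$ --- survives the passage to $\CMcal{I}_n$.

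The permutation statement is then a formal consequence: granting that the Baril--Vajnovszki bijection $b:\mathfrak{S}_n\to\CMcal{I}_n$ of~\cite{bv} carries $(\des,\iasc,\lmax,\lmin,\rmax)$ to $(\asc,\rep,\zero,\max,\rmin)$ (which is precisely what the conjecture presupposes), the conjugate $b^{-1}\circ\Omega\circ b$ does the job on $\mathfrak{S}_n$. A non-bijective alternative worth running in parallel would be to find an explicit generating function for $(\asc,\rep,\zero,\max,\rmin)$ on $\CMcal{I}_n$ and to establish its symmetry in the $\max$- and $\rmin$-variables by one of the base $q=1-r$ transformations of Section~\ref{S:hyp}; but, in contrast with $\A_n$, no workable closed form for this inversion-sequence generating function is currently known, so this route would first require discovering one.
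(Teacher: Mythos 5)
You have not proved the statement; what you have written is a programme whose decisive step is left open, and that step is precisely the obstruction the paper itself identifies. Note first that this statement is a \emph{conjecture} in the paper: the authors give no proof, report only a Maple verification up to $n=10$, and explicitly remark that ``it currently seems that the proof of Theorem~\ref{T:5tuple} cannot be modified to affirm Conjecture~\ref{OP:2}.'' Your plan is exactly such a modification, and you correctly locate where it breaks: the substitution and insertion rules $\R_1$--$\R_4$, and with them {\sf Cases} $1$--$4$ of Lemma~\ref{L:rmin4} and {\sf Steps} $1$--$3$ of Lemma~\ref{L:rmin5}, pivot on an $\M asc$, i.e.\ an entry equal to $\asc(s_1,\dots,s_{i-1})+1$, whose existence and behaviour are guaranteed only by the defining inequality $s_i\le\asc(s_1,\dots,s_{i-1})+1$ of ascent sequences. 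On a general inversion sequence no such canonical pivot between the two rightmost copies of $\Rmin(s)_{\rpos(s)}$ is available, and you offer no candidate replacement, no case analysis, and no verification that the five statistics would be preserved. Saying that ``the first and hardest step will be to find the correct replacement'' is an accurate diagnosis of why the conjecture is open, not a proof of it. The surrounding structural claims (that the partition into $\T_i$, $\D_i$ and the maps $f_2,\dots,f_5$, $h_2,\dots,h_5$ transfer to $\CMcal{I}_n$, and that the double induction then splices) are also unverified and cannot be checked until the pivot issue is resolved, since several of those maps themselves invoke $\M asc$'s (e.g.\ $\CMcal{P}_1$, $\T_3$, $\T_4$, $f_3$).

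Two smaller points. The reduction of the permutation statement to the inversion-sequence statement via the Baril--Vajnovszki bijection $b$ is indeed formal and matches what the paper records in the proof of Proposition~\ref{P:syminv1}, so that part of your argument is fine \emph{conditional} on the first part. Your alternative analytic route is likewise correctly assessed as blocked: the paper states that no generating function for $(\asc,\rep,\zero,\max)$ on $\CMcal{I}_n$ is known, so the base $q=1-r$ machinery of Section~\ref{S:hyp} has nothing to act on. In sum, the proposal is a reasonable research plan but contains a genuine, acknowledged gap at its core and therefore does not establish the conjecture.
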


This has been verified by Maple up to $n=10$. Different from ascent sequences, a generating function formula for the quadruple $(\asc,\rep,\zero,\max)$ of Euler--Stirling statistics on inversion sequences remains unknown, but one for the pair $(\asc,\rep)$ of Eulerian statistics was established by Garsia and Gessel \cite{gg}:
In view of \eqref{dou:fo}, let
\begin{align*}
B_n(u,x)&:=\sum_{s\in\CMcal{I}_n}u^{\asc(s)}x^{\rep(s)}=\sum_{\pi\in\mathfrak{S}_n}u^{\des(\pi)}x^{\iasc(\pi)},\\
H_n(u,x)&:=\sum_{s\in\CMcal{I}_n}u^{\asc(s)}x^{n-1-\rep(s)}=\sum_{\pi\in\mathfrak{S}_n}u^{\des(\pi)}x^{n-1-\iasc(\pi)},
\end{align*}
then $$B_n(u,x)=x^{n-1}H_n(u,x^{-1}),$$ and there holds
\begin{align}\label{E:genascrep}
\sum_{n\ge 0}\frac{H_n(u,x)\,t^n}{(1-u)^{n+1}(1-x)^{n+1}}
=\sum_{k\ge 0}\sum_{m\ge 0}\frac{u^k x^m}{(1-t)^{km}},
\end{align}
which implies $H_n(u,x)=H_n(x,u)$, or equivalently, $B_n(u,x)=B_n(x,u)$ (see also \eqref{inv:sym}). One possible approach to solve Conjecture \ref{OP:2}
is to deduce an extension of \eqref{E:genascrep} by including the Stirling statistics
$\zero,\max,\rmin$ and to read the symmetry directly from the extended
generating function formula. 

While Theorem \ref{bij:sym} holds if $\A_n$ is replaced by $\CMcal{I}_n$ (see the following Proposition \ref{P:syminv1} which is a direct result of a bijection due to Baril and Vajnovszki \cite{bv}),
it currently seems that the proof of Theorem \ref{T:5tuple} cannot be modified
to affirm Conjecture \ref{OP:2}.
\begin{proposition}\label{P:syminv1}
	There is a bijection $\varrho:\CMcal{I}_n\rightarrow \CMcal{I}_n$ such that for any $s\in\CMcal{I}_n$,
	\begin{align*}
	(\asc,\rep,\zero,\max)s=(\rep,\asc,\rmin,\zero)\varrho(s).
	\end{align*}
\end{proposition}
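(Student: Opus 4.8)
The plan is to obtain $\varrho$ by conjugating a symmetry of permutation statistics through the permutation code of Baril and Vajnovszki \cite{bv}. By Theorem~1 of \cite{bv} there is a bijection $b\colon\mathfrak{S}_n\to\CMcal{I}_n$ carrying the quintuple of permutation statistics $(\des,\iasc,\lmax,\lmin,\rmax)$ to $(\asc,\rep,\zero,\max,\rmin)$ on inversion sequences; this is the correspondence already used implicitly in the ``consequently'' clause of Conjecture~\ref{OP:2}. In particular $(\asc,\rep,\zero,\max)s=(\des,\iasc,\lmax,\lmin)\,b^{-1}(s)$ for every $s\in\CMcal{I}_n$, and $(\rep,\asc,\rmin,\zero)\,b(\pi)=(\iasc,\des,\rmax,\lmax)\pi$ for every $\pi\in\mathfrak{S}_n$. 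Hence it suffices to produce a bijection $\psi\colon\mathfrak{S}_n\to\mathfrak{S}_n$ with
\begin{align*}
(\des,\iasc,\lmax,\lmin)\pi=(\iasc,\des,\rmax,\lmax)\,\psi(\pi)\qquad\text{for all }\pi\in\mathfrak{S}_n;
\end{align*}
then $\varrho:=b\circ\psi\circ b^{-1}$ is the desired bijection, as one reads off by composing the displayed identities with those of $b$.

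This moves the problem to the permutation level, where its Eulerian part, $(\des,\iasc)\pi=(\iasc,\des)\psi(\pi)$, is precisely the bijective content of Foata's double Eulerian equidistribution \eqref{dou:fo} together with the symmetry \eqref{inv:sym}; what one needs in addition is that the same $\psi$ sends the pair of Stirling statistics $(\lmax,\lmin)$ to $(\rmax,\lmax)$. Such a $\psi$ is exactly what the Baril--Vajnovszki code is built to provide: unwinding the construction of $b$ in \cite{bv}, the natural reversal of a code corresponds on $\mathfrak{S}_n$ to a map that interchanges $\des$ with $\iasc$ while transforming the left-to-right extrema as required; equivalently, one takes $\psi=b^{-1}\circ\widetilde b$ for a suitable second variant $\widetilde b$ of the code. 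With $\psi$ in hand, checking the four (in fact five) statistic identities is a direct verification, and Proposition~\ref{P:syminv1} follows.

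The step requiring the most care is the bookkeeping of the Stirling statistics, not the Eulerian pair: \cite{bv} is phrased around the double Eulerian bistatistic $(\des,\iasc)$, so to conclude one must descend into the fine structure of the code and confirm that $\lmax$, $\lmin$, $\rmax$ are transported in the stated way, matching the three pairwise-equidistributed Stirling statistics to the correct slots (this is responsible for the asymmetric-looking pattern in Proposition~\ref{P:syminv1}, with $\rmin$ on one side and $\max$ on the other). No genuinely new combinatorics beyond \cite{bv} should be needed; the role of the proposition is to record that, unlike the substantially harder Conjecture~\ref{OP:2}, the four-statistic symmetry on $\CMcal{I}_n$ is already a consequence of an existing permutation code.
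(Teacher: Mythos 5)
Your reduction to the permutation level is exactly the one the paper makes: using the Baril--Vajnovszki code $b$ with $(\des,\iasc,\lmin,\lmax,\rmax)\tau=(\asc,\rep,\max,\zero,\rmin)b(\tau)$, the proposition is equivalent to producing $\psi:\mathfrak{S}_n\to\mathfrak{S}_n$ with $(\des,\iasc,\lmax,\lmin)\pi=(\iasc,\des,\rmax,\lmax)\psi(\pi)$, and then setting $\varrho=b\circ\psi\circ b^{-1}$. Up to this point you agree with the paper.

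The gap is that you never actually produce $\psi$. You assert that it comes from ``the natural reversal of the code'' or ``a suitable second variant $\widetilde b$'' and acknowledge that one would still have to ``descend into the fine structure of the code'' to confirm how $\lmax,\lmin,\rmax$ are transported --- but that confirmation is the entire content of the step, and it is not carried out; as written, the proof is incomplete precisely where the work lies. Moreover, no descent into the code is needed at all: the paper takes the elementary map $\psi(\tau)=(\tau^{-1})^c$, where $\tau^c=(n+1-\tau_1)\cdots(n+1-\tau_n)$. The verification uses only classical symmetries: complementation gives $\des(\sigma^c)=\asc(\sigma)$, $\lmax(\sigma^c)=\lmin(\sigma)$, $\rmax(\sigma^c)=\rmin(\sigma)$; inversion gives $\lmin(\sigma^{-1})=\lmin(\sigma)$ and $\rmin(\sigma^{-1})=\lmax(\sigma)$; and $((\tau^{-1})^c)^{-1}=\tau^r$ shows $\iasc((\tau^{-1})^c)=\asc(\tau^r)=\des(\tau)$, while $\des((\tau^{-1})^c)=\asc(\tau^{-1})=\iasc(\tau)$. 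Together these give $(\des,\iasc,\lmax,\lmin)\tau=(\iasc,\des,\rmax,\lmax)(\tau^{-1})^c$, and hence $\varrho(s)=b\bigl((b^{-1}(s)^{-1})^c\bigr)$ works. So your strategy is salvageable, but you must replace the appeal to an unspecified variant of the code by an explicit $\psi$ and actually check the four coordinates.
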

\begin{proof}
	Baril and Vajnovszki (see Theorem 1 of \cite{bv}) constructed a bijection
	$b:\mathfrak{S}_n\rightarrow \CMcal{I}_n$ satisfying that for
	any $\tau\in\mathfrak{S}_n$,
	\begin{align*}
	(\des, \iasc,\lmin,\lmax,\rmax)\tau=(\asc,\rep,\max,\zero,\rmin)b(\tau).
	\end{align*}
	Let $\tau^c=(n+1-\tau_1)(n+1-\tau_2)\cdots (n+1-\tau_n)$ be the complement of $\tau$,  then for any $s\in\CMcal{I}_n$, let $\tau=b^{-1}(s)$ and we have
	\begin{align*}
	(\asc,\rep,\zero,\max)s&=(\des,\iasc,\lmax,\lmin)b^{-1}(s)\\
	&=(\des,\iasc,\lmax,\lmin)\tau\\
	&=(\iasc,\des,\rmax,\lmax)(\tau^{-1})^c\\
	&=(\rep,\asc,\rmin,\zero)\,b((\tau^{-1})^c),
	\end{align*}
	that is, by defining $\varrho(s)=b((\tau^{-1})^c)$ the proof is complete.
\end{proof}
If Conjecture \ref{OP:2} is true, then it follows from Proposition \ref{P:syminv1} that Conjecture \ref{conj1ref} also holds if $\A_n$ is replaced by $\CMcal{I}_n$. \\



\begin{thebibliography}{99}
	
\bibitem{aj}
G. Andrews and V. Jel\'inek, On $q$-series identities related to interval orders,
\emph{European J.\ Combin.\ }\textbf{39} (2014), 178--187.

\bibitem{as}
G. Andrews and J. Sellers,
Congruences for the Fishburn numbers,
\emph{J.\ Number Theory} \textbf{161} (2016), 298--310.

\bibitem{bv}
J.L. Baril and V. Vajnovszki, A permutation code preserving a double Eulerian bistatistic, \emph{Discrete Appl. Math.} \textbf{224} (2017), 9--15.

\bibitem{bcdk}
M. Bousquet-M\'elou, A. Claesson, M. Dukes and S. Kitaev,
$({\bf2+2})$-free posets, ascent sequences and pattern avoiding permutations,
\emph{J.\ Combin.\ Theory Ser.\ A} \textbf{117} (2010), 884--909.

\bibitem{BLR:14}
K. Bringmann, Y. Li and R.C. Rhoades,
Asymptotics for the number of row-Fishburn matrices,
\emph{European J.\ Combin.\ }\textbf{41} (2014), 183--196.

\bibitem{cyz}
D. Chen, S.H.F. Yan and R.D.P. Zhou,
Equidistributed Statistics on Fishburn Matrices and Permutations,
\emph{Electron.\ J. Combin.\ }\textbf{26}(1) (2019), \#P1.11.

\bibitem{cl}
A. Claesson and S. Linusson, $n!$ matchings, $n!$ posets,
\emph{Proc.\ Amer.\ Math.\ Soc.\ }\textbf{139} (2011), 435--449.

\bibitem{dkrs}
M. Dukes, S. Kitaev, J. Remmel and E. Steingr\'imsson,
Enumerating $({\bf2+2})$-free posets by indistinguishable elements,
\emph{J.\ Combin.\ }\textbf{2} (2011), 139--163.

\bibitem{dp}
M. Dukes and R. Parviainen,
Ascent sequences and upper triangular matrices containing non-negative integers,
\emph{Electronic J.\ Combin.\ }\textbf{17} (2010), \#R53.


\bibitem{fi1}
P.C. Fishburn,
Intransitive indifference with unequal indifference intervals,
\emph{J.\ Math.\ Psych.\ }\textbf{7}(1) (1970), 144--149.

\bibitem{fi2}
P.C. Fishburn,
{\em Interval orders and interval graphs: a study ofpartially ordered sets},
John Wiley $\&$ Sons, 1985.

\bibitem{fo}
D. Foata,
Distributions eul\'eriennes et mahoniennes sur le groupe des permutations.
With a comment by Richard P. Stanley.
\emph{NATO Adv.\ Study Inst.\ Ser., Ser.\ C: Math.\ Phys.\ Sci.\ }\textbf{31},
Higher combinatorics, pp. 27--49,
Reidel, Dordrecht-Boston, Mass., 1977.

\bibitem{fjlyz}
S. Fu, E.Y. Jin, Z. Lin, S.H.F.Yan and R.D.P. Zhou,
A new decomposition of ascent sequences and Euler--Stirling statistics,
\emph{J.\ Combin.\ Theory Ser.\ A} \textbf{170} (2020), 105141, 31 pp.

\bibitem{gg}
A.M. Garsia and I. Gessel,
Permutation statistics and partitions, 
\emph{Adv. Math.} \textbf{31}(3) (1979), 288--305.

\bibitem{gar}
F. Garvan,
Congruences and relations for r-Fishburn numbers, 
\emph{J.\ Combin.\ Theory Ser.\ A} \textbf{134} (2015), 147--165.

\bibitem{gr}
G.\ Gasper and M.\ Rahman,
\emph{Basic hypergeometric series}, Second Edition,
Encyclopedia of Mathematics And Its Applications~96,
Cambridge University Press, Cambridge, 2004.

\bibitem{hj}
H.K. Hwang and E.Y. Jin,
Asymptotics and statistics on Fishburn matrices and their generalizations,
submitted;
\href{https://arxiv.org/abs/1911.06690}{arXiv:1911.06690}.

\bibitem{je}
V. Jel\'inek,
Counting general and self-dual interval orders,
\emph{J.\ Combin.\ Theory Ser.\ A} \textbf{119} (2012), 599--614.

\bibitem{je2}
V. Jel\'inek,
Catalan pairs and Fishburn triples,
\emph{Adv.\ in Appl.\ Math.\ }\textbf{70} (2015), 1--31.

\bibitem{kr2}
S. Kitaev and J. Remmel,
Enumerating $({\bf2+2})$-free posets by the number
of minimal elements and other statistics,
\emph{Discrete Appl.\ Math.\ }\textbf{159}(17) (2011), 2098--2108.

\bibitem{kr}
S. Kitaev and J. Remmel,
A note on $p$-ascent sequences,
\emph{J.\ Combin.\ }\textbf{8}(3) (2017), 487--506.

\bibitem{lev}
P. Levande,
Fishburn diagrams, Fishburn numbers and their refined generating functions,
\emph{J.\ Combin.\ Theory Ser.\ A} \textbf{120} (2013), 194--217.

\bibitem{leh}
D.H. Lehmer,
Teaching combinatorial tricks to a computer,
in: \emph{Proc.\ Sympos.\ Appl.\ Math.\ }\textbf{10},
Amer.\ Math.\ Soc., Providence, RI, 1960, pp. 179--193.

\bibitem{oeis}
OEIS Foundation Inc.,
The On-Line Encyclopedia of Integer Sequences,
\href{http://oeis.org}{http://oeis.org}, 2011.

\bibitem{sto}
A. Stoimenow,
Enumeration of chord diagrams and an upper bound for Vassiliev invariants,
\emph{J.\ Knot Theory Ramifications} \textbf{7} (1998), 93--114.


\bibitem{zag}
D. Zagier,
Vassiliev invariants and a strange identity related to the Dedekind eta-function,
\emph{Topology} \textbf{40} (2001), 945--960.

\end{thebibliography}
\end{document}